\documentclass[11pt]{amsart}

\usepackage{amsmath,amssymb,amsthm,mathtools}
\usepackage[margin=1.12in]{geometry}
\usepackage{microtype}
\usepackage{enumitem}
\usepackage[hidelinks]{hyperref}
\hypersetup{pdftitle={Real Inertia, Phase Obstructions, and de Rham Algebras for Orientifolds},pdfauthor={Xiaobin Li},pdfsubject={Journal replacement: theoretical proofs and the Hu-Wang complex stringy K-theory comparison}}
\usepackage[T1]{fontenc}
\usepackage{lmodern}

\numberwithin{equation}{section}

\newtheorem{theorem}{Theorem}[section]
\newtheorem{proposition}[theorem]{Proposition}
\newtheorem{lemma}[theorem]{Lemma}
\newtheorem{corollary}[theorem]{Corollary}

\theoremstyle{definition}
\newtheorem{definition}[theorem]{Definition}
\newtheorem{example}[theorem]{Example}
\theoremstyle{remark}
\newtheorem{remark}[theorem]{Remark}

\newcommand{\CR}{\mathrm{CR}}
\newcommand{\Ori}{\mathrm{Ori}}
\newcommand{\age}{\operatorname{age}}
\newcommand{\rk}{\operatorname{rk}}
\newcommand{\Fix}{\operatorname{Fix}}
\newcommand{\R}{\mathbb R}
\newcommand{\C}{\mathbb C}
\newcommand{\Z}{\mathbb Z}
\newcommand{\Ftwo}{\mathbb F_2}
\newcommand{\Aplus}{\mathcal A^{+}}

\newcommand{\AOri}{\mathcal A_{\Ori}}
\newcommand{\RR}{\mathfrak R}

\title{Real Inertia, Phase Obstructions, and de Rham Algebras for Orientifolds}
\author{Xiaobin Li}
\thanks{Corresponding author: \texttt{lixiaobin@home.swjtu.edu.cn}.}
\address{School of Mathematics, Southwest Jiaotong University, Chengdu, Sichuan, China}
\email{lixiaobin@home.swjtu.edu.cn}

\subjclass[2020]{Primary 55N32; Secondary 57R18, 53D12, 55N91, 19L47}
\keywords{orbifold cohomology, orientifold, Real inertia, phase obstruction, de Rham algebra, determinant line, Calabi--Yau surface}

\date{27 September 2026}

\begin{document}

\begin{abstract}
We study cohomological de Rham constructions for finite orientifold quotients. On the $G$-invariant even inertia, a phase-corrected anti-linear reflection induces a real form of the Chen-Ruan algebra. For anti-linear involutions of a complex line, we prove that no conjugacy-invariant single-sector age can satisfy the integer defect rule for all reflection pairs. For actions preserving a Calabi--Yau volume line, a twisted Bockstein class measures precisely the obstruction to a coherent real phase lift and to phase-trivial normalization.

We formulate an Euler-Gysin realization theorem for even-rank clean correspondence data with an explicit equality of twisted Euler classes. Stable real bundle information alone is not sufficient. For an effective phase-trivial generalized dihedral action on a Calabi--Yau surface, we construct an invariant rotated almost complex structure. The surface product is identified with the ordinary Chen--Ruan product for this rotated structure; all interactions involving an odd label have zero obstruction rank. This identifies the source of associativity and places the construction within existing orbifold theory. For an Eisenstein abelian surface, we compute the resulting $16$-dimensional invariant Frobenius algebra. Its multiplication is not supercommutative in the shifted grading. Further examples on square abelian and real quartic K3 surfaces exhibit disconnected or empty reflection loci and nonzero normal Euler classes. A torus-real K3 example has the same graded dimensions as the Eisenstein algebra but is distinguished by the fourth power of its degree one generator.
\end{abstract}

\maketitle

\section{Introduction and main results}

Chen--Ruan cohomology associates to an almost complex orbifold a graded cohomology theory built from the inertia orbifold, with multiplication corrected by an obstruction bundle \cite{ChenRuan}.  For global quotients the construction admits concrete descriptions \cite{FantechiGottsche,JKK}.  In the abelian case Chen and Hu gave a differential form realization in which the obstruction contribution is encoded by explicit twist factors \cite{ChenHu}.  Their construction provides a model of the Chen--Ruan ring in terms of differential forms on fixed loci. Kaufmann subsequently treated de Rham theory for arbitrary finite global quotients, relating the two orders of pullback and pushforward and working with the requisite identities up to homotopy \cite[Section~4]{KaufmannDeRham}. Jianxun Hu and Bai-Ling Wang give an intrinsic description of obstruction bundle for general almost complex orbifolds and a formal Thom realization of the Chen--Ruan product \cite[Theorems~3.2 and 3.8]{HuWang}. Thus the de Rham realization problem already has a substantial nonabelian literature.

Obstruction bundles and associative convolution products have likewise been studied systematically. The virtual intersection construction of Lupercio, Uribe and Xicot\'encatl already uses a triple excess criterion for associativity, following Fantechi--G\"ottsche \cite[Section~5]{LUXVirtual}. Hepworth gives a representation-theoretic description of the Chen--Ruan obstruction bundle, including noncommuting pairs \cite{Hepworth}. Edidin, Jarvis and Kimura construct orbifold products using logarithmic trace \cite{EJKLog}, and formulate sufficient excess bundle identities for inertial products \cite[Section~3.1]{EJKPlethora}. The Euler--Gysin theorem below is a real local coefficient version of this established pullback--Euler--pushforward mechanism, with a nonstable Euler comparison stated explicitly.

For broader background on orbifolds from groupoid and stringy topological viewpoints, we refer to \cite{AdemLeidaRuan,Moerdijk}.  The interpretation of twisted sectors through inertia and loop groupoids, together with their relation to gerbes, was developed in \cite{LupercioUribeLoop}.  Twisted $K$-theoretic refinements of the orbifold picture appear in \cite{AdemRuanTwistedK,LupercioUribeGerbes,TuXuLaurent}. Hu and Wang also construct a stringy product on the complexified orbifold $K$-theory of the base orbifold and identify it with Chen--Ruan cohomology by a modified delocalized Chern character \cite[Theorem~4.5]{HuWang}. For the rotated surface quotient in this paper, that theorem gives the ordinary complex $K$-theory comparison recorded in Corollary~\ref{cor:HuWang-rotation}; it is distinct from a Real $KR$-theoretic realization.

The purpose of this paper is to study how the Chen--Hu de Rham picture changes when orientation-reversing symmetries are introduced.  The natural group-theoretic datum is a finite $\Z_2$-graded group
\begin{equation}\label{eq:graded-group-intro}
  1\longrightarrow G\longrightarrow \widehat G
  \stackrel{\epsilon}{\longrightarrow}\Z_2\longrightarrow 1,
\end{equation}
where the even subgroup $G$ acts complex linearly and the odd elements act anti-complex linearly.  This is a natural global quotient model for orientifold geometry.  Graded groupoids, orientation-twisted transgression, Jandl structures and Real twistings occur naturally in related unoriented field theories and in Real $K$-theory \cite{YoungDW,NoohiYoung,HekmatiMurraySzaboVozzo,LudersOttoWaldorf}.

In the gerbe formulation of unoriented WZW and orientifold sigma models, orientation reversal is encoded by Jandl or equivariant Jandl structures; see \cite{SchreiberSchweigertWaldorf,GawedzkiSuszekWaldorfWZW,GawedzkiSuszekWaldorfGerbes}.  These constructions provide an important geometric precedent for the twisted phase and orientation data considered here.  The present construction, however, is different in nature: it is built directly from fixed-locus de Rham complexes, local coefficient systems and Euler--Gysin correspondences, rather than from gerbe holonomy.

There is an important distinction between even and odd fixed sectors.  If $g\in G$, then $X^g$ is an almost complex submanifold and carries the ordinary Chen--Ruan age.  If $\sigma\in\widehat G\setminus G$ is an anti-complex involution, then $X^\sigma$ is totally real and, in the anti-symplectic case, Lagrangian.  Thus its normal geometry is Real rather than a complex character representation.

A tempting strategy is to write an anti-linear normal line as
\begin{equation}\label{eq:tempting-weight}
  \sigma_\theta(z)=e^{2\pi i\theta}\overline z
\end{equation}
and to regard $\theta$ as an odd analogue of the Chen--Ruan age.  This is not intrinsic: all maps \eqref{eq:tempting-weight} are unitarily conjugate, whereas products of two such reflections are rotations with continuously varying ages.  The grading information in an odd--odd product is therefore relative.  It belongs naturally to a pair of Real boundary conditions; Maslov or Fredholm indices provide one analytic source of such relative data, but the cohomological constructions below are formulated independently of an analytic sewing theorem.

The paper develops this observation in five steps.

First, on the ordinary even inertia we construct a real form of the Chen--Hu algebra determined by the chosen reflection symmetry.  Fix an odd involution $\tau$ normalizing a finite abelian even subgroup $G$.  On a connected component $F\subset X^g$ define
\begin{equation}\label{eq:R-intro}
  \RR(\alpha_{g,F})
  =e^{\pi i\age(g,F)}
   (\tau^{-1}|_{\tau F})^*\overline{\alpha_{g,F}},
\end{equation}
with label transformation
\begin{equation}\label{eq:q-intro}
  q(g)=\tau g^{-1}\tau^{-1}.
\end{equation}
The inverse in \eqref{eq:q-intro} is the algebraic shadow of loop reversal.  The age phase in \eqref{eq:R-intro} compensates the orientation signs arising from the obstruction Euler class and the Gysin normal bundle.

\begin{theorem}[Closed-sector Real de Rham theorem]\label{thm:closed-real-intro}
Let $X$ be a compact almost complex manifold, let $G$ be a finite abelian group acting by almost complex automorphisms, and let $\tau$ be an anti-complex involution normalizing $G$. On the $G$-invariant shifted de Rham complex of the even inertia, \eqref{eq:R-intro} defines an anti-linear cochain involution. Its induced map on Chen--Ruan cohomology is an algebra automorphism, and its fixed cohomology is a real form of $H^*_{\CR}([X/G];\C)$. No strict associative multiplication on the raw differential forms is asserted.
\end{theorem}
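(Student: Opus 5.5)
We begin with the cochain level. The even inertia is the disjoint union of the fixed submanifolds $X^g$, $g\in G$. The involution $\tau$ maps $X^g$ diffeomorphically onto $X^{\tau g\tau^{-1}}$. Since $\tau$ is anti-complex, the normal bundle $N_{X^g}$ with its $g$-eigenline decomposition for eigenvalue $e^{2\pi i\theta}$ is carried to the $\tau g\tau^{-1}$-eigenbundle for eigenvalue $e^{-2\pi i\theta}$. This yields the identity
\[
\age(q(g),\tau F)=\age(\tau g\tau^{-1},\tau F)^{\mathrm{inv}}=\age(g,F),
\]
where the superscript $\mathrm{inv}$ indicates that the age is computed after passing to the inverse label. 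Consequently the map \eqref{eq:R-intro}, sending $\alpha_{g,F}$ to a form on the component $\tau F$ of $X^{q(g)}=X^{\tau g\tau^{-1}}$, preserves the shifted degree $\deg+2\age$.

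Next we check the cochain properties. $\RR$ is anti-linear because of the complex conjugation, and it commutes with $d$ because pullback and conjugation do. It preserves $G$-invariance because $\tau$ normalizes $G$: for $h\in G$ we have $h^*\RR\alpha=\RR((\tau h\tau^{-1})^{*}\alpha)$. For the involution property we apply $\RR$ twice. The two phases combine as $e^{\pi i\age(g)}\,\overline{e^{\pi i\age(q(g))}}=1$, using the age equality above and noting that the conjugation acts on the second phase. The label returns to $q(q(g))=\tau^2 g\tau^{-2}$, and the pullback reduces to $(\tau^{2})^*$, which is the identity because $\tau^2=1$.

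The main work, and the expected obstacle, is multiplicativity on cohomology. Following Chen--Hu, we write the product $\alpha_g\cdot\beta_h$ as the pushforward to $X^{gh}$ of
\[
e_{g,h}\wedge\iota^*\alpha\wedge\iota^*\beta
\]
on $X^{g,h}$, where $e_{g,h}$ is a Chen--Hu twisted Euler form representing the obstruction class. We would prove the three naturality identities for $\tau$:
\begin{enumerate}[label=(\roman*)]
\item conjugation commutes with restriction;
\item $\tau^*\overline{(\cdot)}$ intertwines Gysin pushforward up to the sign $(-1)^{\rk_\C N}$, because $\tau$ reverses the complex orientation of a complex bundle of complex rank $r$ by $(-1)^r$;
\item $\tau^*\overline{e_{g,h}}=(-1)^{\rk_\C E}e_{q(g),q(h)}$ in cohomology, because the obstruction bundle for $(g,h)$ is carried by the anti-linear $\tau$ to the conjugate of the obstruction bundle for $(\tau g^{-1}\tau^{-1},\tau h^{-1}\tau^{-1})$. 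Abelianness ensures that the inverse labels still multiply to $q(gh)$.
\end{enumerate}
The signs must then cancel against the phases: we need
\[
e^{\pi i(\age g+\age h-\age gh)}=(-1)^{\rk E+\rk N_{X^{g,h}\subset X^{gh}}}.
\]
This follows from the Chen--Ruan dimension formula
\[
\age g+\age h-\age gh=\rk E+\operatorname{codim}_\C(X^{g,h}\subset X^{gh}),
\]
which is an integer, so the exponential is the required sign. We would check this on a normal eigenline decomposition, where everything reduces to a single character, taking care with the convention that $\age(g^{-1})=\operatorname{codim}-\age(g)$. This is the step most likely to need the inverse in $q$.

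Finally, because $\RR$ is an anti-linear involution of the complex cohomology commuting with the product, its fixed space $V$ satisfies $V\otimes_\R\C\cong H^*_{\CR}$ via the decomposition $v=\tfrac12(v+\RR v)+i\cdot\tfrac1{2i}(v-\RR v)$. Moreover $V$ is closed under the product, so $V$ is a real form. Since the argument only uses these identities in cohomology, no strict associativity on forms is needed, consistent with the statement.
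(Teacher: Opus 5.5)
Your proposal is correct and follows essentially the same route as the paper: age preservation under $q$ via anti-linearity of $d\tau$, cancellation of complex-conjugate phases for $\RR^2=1$, and the sign $(-1)^{r_W+c_W}$ from anti-complex orientation reversal on the obstruction bundle and on the Gysin normal bundle. As in the paper, that sign is then cancelled by the integer age--rank identity, and the standard real-form decomposition finishes the argument. Two small slips do not affect the argument: in $\RR^2$ the conjugation falls on the inner phase $e^{\pi i\age(g)}$, and the inverse in $q$ is what gives age preservation rather than something needed in the age--rank identity.
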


Second, the usual rule of integer defect cannot be extended to all reflection pairs by a conjugacy-invariant single-sector fractional age.

\begin{theorem}[No canonical odd age]\label{thm:no-odd-age-intro}
There is no conjugacy-invariant function on anti-linear involutions of a complex line which simultaneously serves as a fractional sector age and makes the degree defect of every reflection--reflection product integer-valued.  In particular, a coordinate-independent odd Chen--Ruan age cannot be extracted from one anti-linear involution.
\end{theorem}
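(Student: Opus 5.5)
The plan is to reduce the statement to an elementary computation on $\C$. First I classify the objects involved, then I show that conjugacy invariance forces the putative odd age to be constant, and finally I show that the products of reflections realize every rotation age, so no constant can satisfy the integer defect rule.

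\emph{Step 1: classification.} An anti-linear map of a complex line has the form $\sigma(z)=\lambda\overline z$ with $\lambda\in\C^\times$. Since $\sigma^2(z)=\lambda\overline\lambda z$, it is an involution exactly when $|\lambda|=1$. So every anti-linear involution is one of the maps $\sigma_\theta$ in \eqref{eq:tempting-weight}, with $\theta\in\R/\Z$.

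\emph{Step 2: conjugacy forces a constant.} Let $u(z)=e^{\pi i\theta}z$. A direct check gives $u\,\sigma_0\,u^{-1}=\sigma_\theta$. Hence all anti-linear involutions form a single conjugacy class, already under $U(1)$ and a fortiori under $GL_1(\C)$ or under the full group of real-linear automorphisms. Any conjugacy-invariant function $a$ on anti-linear involutions is therefore a constant $c\in\R$. Here the word ``fractional'' allows $c\in[0,1)$, but the argument only uses $c\in\R$.

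\emph{Step 3: the products realize every rotation age.} Compute
\[
\sigma_\theta\sigma_\varphi(z)=e^{2\pi i\theta}\,\overline{e^{2\pi i\varphi}\overline z}=e^{2\pi i(\theta-\varphi)}z .
\]
This is a complex rotation, whose Chen--Ruan age is the fractional part $\{\theta-\varphi\}$. The integer defect rule, modelled on the even case $\age(g)+\age(h)-\age(gh)\in\Z$, requires
\[
\{\theta-\varphi\}-a(\sigma_\theta)-a(\sigma_\varphi)\in\Z
\qquad\text{for all }\theta,\varphi .
\]
With $a\equiv c$ this becomes $\{\theta-\varphi\}-2c\in\Z$.

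\emph{Step 4: contradiction.} Take $\theta=\varphi$ to get $2c\in\Z$. Then take $\theta-\varphi=1/3$ to get $1/3\in\Z$, which is absurd. The continuum of differences $\theta-\varphi$ actually rules out any rule in which the defect depends on $\theta-\varphi$ only through integer data. This is the precise sense in which the grading of an odd--odd product is relative. The ``in particular'' clause follows at once: a coordinate-independent odd age would be such a conjugacy-invariant function.

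The main obstacle is not computational. It is pinning down what ``serves as a fractional sector age'' and ``degree defect'' mean, so that the statement is not vacuous. I would fix the convention that the defect of a pair is the rotation age of the product minus the sum of the two single-sector ages. I would then remark that the argument persists for any fixed normalization of the form $\age(\sigma\sigma')-a(\sigma)-a(\sigma')+k$ with $k$ a constant. It also persists if conjugacy is restricted to the unitary group, since Step 2 already uses only $U(1)$.
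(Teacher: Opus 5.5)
Your argument is correct and follows the paper's own route: unitary conjugacy (Lemma~\ref{lem:unitary-conjugacy}) forces $a\equiv c$, and the products $\sigma_\theta\sigma_\varphi$ realize a continuum of rotation ages, so $\{\theta-\varphi\}-2c$ cannot always be an integer. Your Step~1 also makes explicit that every anti-linear involution is some $\sigma_\theta$. One caution: the paper's precise version (Theorem~\ref{thm:no-odd-age}) imposes the defect rule only when $\sigma\rho\neq1$, so your choice $\theta=\varphi$ is not available there; comparing the two nontrivial differences $\theta-\varphi=1/3$ and $\theta-\varphi=2/3$, or using the paper's continuity in $\theta\in(0,1)$, gives the contradiction without that case.
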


Third, in the Calabi--Yau setting the relative phase data assemble into a global obstruction class.  Let $\mathbb Z_\epsilon$ denote the sign module.  If the graded action preserves the complex line spanned by a holomorphic volume form $\Omega$, then its phase cocycle determines
\[
  [\mu_{\R}]\in H^2(\widehat G;\Z_\epsilon).
\]

\begin{theorem}[Global phase obstruction]\label{thm:phase-obstruction-intro}
The class $[\mu_{\R}]$ is, up to the standard sign convention, the Bockstein of the $\mathbb R/\mathbb Z$-valued phase cocycle.  The following are equivalent:
\begin{enumerate}[label=\textup{(\roman*)}]
\item $[\mu_{\R}]=0$;
\item the phase cocycle admits a coherent real lift;
\item after multiplying $\Omega$ by a constant unit complex number one can arrange
\[
 g^*\Omega=\Omega\quad(g\in G),
 \qquad
 \sigma^*\Omega=\overline\Omega\quad(\epsilon(\sigma)=1).
\]
\end{enumerate}
When the obstruction vanishes, coherent lifts form a torsor for $Z^1(\widehat G;\Z_\epsilon)$, and their equivalence classes form a torsor for $H^1(\widehat G;\Z_\epsilon)$.
\end{theorem}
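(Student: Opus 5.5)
We make the phase data explicit and then reduce everything to the long exact sequence in cohomology with twisted coefficients. For $h\in\widehat G$ write $\Omega^{(h)}=\Omega$ if $\epsilon(h)=0$ and $\Omega^{(h)}=\overline\Omega$ if $\epsilon(h)=1$. Since the action preserves $\C\Omega$ and an anti-linear diffeomorphism sends $(n,0)$-forms to $(0,n)$-forms, there are unit scalars $\chi(h)$ with $h^*\Omega=\chi(h)\,\Omega^{(h)}$; they are unitary because the $\widehat G$-orbit of $\Omega$ is finite. Expanding $(hk)^*\Omega=k^*h^*\Omega$ and using $k^*\overline\Omega=\overline{\chi(k)}\,\overline{\Omega^{(k)}}$ gives the twisted cocycle identity
\[
\chi(hk)=\chi(k)\,\chi(h)^{(-1)^{\epsilon(k)}} .
\]
Thus $\chi$ is a $1$-cocycle of $\widehat G$ with values in $U(1)_\epsilon$, the unit circle with odd elements acting by inversion, up to replacing the action by its inverse according to the left or right convention. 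Writing $\chi=e^{2\pi i\phi}$ with $\phi\in C^1(\widehat G;(\R/\Z)_\epsilon)$ identifies $\phi$ as the phase cocycle. Rescaling $\Omega$ by a constant $c=e^{2\pi i a}$ changes $\chi$ by the coboundary $h\mapsto c^{(-1)^{\epsilon(h)}-1}$, that is, $\phi\mapsto\phi+\delta a$.

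Next we use the short exact sequence of $\widehat G$-modules $0\to\Z_\epsilon\to\R_\epsilon\to(\R/\Z)_\epsilon\to0$, which is equivariant because inversion commutes with the quotient map. Choose any set-theoretic lift $\tilde\phi\in C^1(\widehat G;\R_\epsilon)$. Then $\mu_\R:=\delta\tilde\phi$ takes values in $\Z_\epsilon$ and is a cocycle, and its class is the Bockstein $\beta[\phi]$. Changing the lift by an integral cochain changes $\mu_\R$ by a coboundary. I expect that pinning down agreement with the paper's explicit formula for $\mu_\R$ will fix only a global sign, which is the ``standard sign convention'' in the statement.

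The equivalence (i)$\Leftrightarrow$(ii) then follows directly. If $\mu_\R=\delta n$ with $n$ integral, then $\tilde\phi-n$ is a real cocycle lifting $\phi$. Conversely, a real cocycle lift $\tilde\phi$ gives $\mu_\R=\delta\tilde\phi=0$.

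For (ii)$\Leftrightarrow$(iii) the key input is that $H^1(\widehat G;\R_\epsilon)=0$, because $\widehat G$ is finite and $\R_\epsilon$ is a $\Q$-vector space, so averaging kills the cohomology. Hence a real cocycle lift is a coboundary, $\tilde\phi=\delta a$. Rescaling $\Omega$ by $e^{-2\pi i a}$ then makes $\chi\equiv1$, which is exactly (iii). Conversely, if $\chi\equiv1$ after rescaling, then $\phi=\delta a$ in $(\R/\Z)_\epsilon$, and $\delta\tilde a$ for a real lift $\tilde a$ of $a$ is a coherent real lift of $\phi$. I take this to be the main point of the argument: the statement ``a lift exists'' upgrades to ``the phases can be trivialized'' only because of this vanishing.

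For the torsor statements, two coherent lifts of the same $\phi$ differ by a real cocycle with integer values, that is, by an element of $Z^1(\widehat G;\Z_\epsilon)$. Conversely, adding any such element to a coherent lift gives another coherent lift, so the action is free and transitive. The natural equivalence is modulo $\delta C^0(\widehat G;\Z_\epsilon)$, which corresponds to changing the integer choice of the lift of the normalizing constant. The quotient of the lifts is therefore a torsor for $H^1(\widehat G;\Z_\epsilon)$. The main obstacle is a convention check rather than a conceptual one. We must verify that the paper's notion of ``equivalence'' of lifts is exactly modification by integral coboundaries. We must also verify that the left or right action conventions make $\chi$ a cocycle for the stated module $\Z_\epsilon$ and not its inverse twist; since inversion is an automorphism of the module, either choice yields isomorphic groups.
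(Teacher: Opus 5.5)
Your argument is the paper's. A real lift of the phases exhibits $\mu_\R$ as minus the Bockstein; the paper's $\mu_\R$ equals $-\delta\tilde a$, so the global sign you anticipated is exactly what occurs. Modifying lifts by integral cochains gives (i)$\Leftrightarrow$(ii). The vanishing of $H^1(\widehat G;\R_\epsilon)$ by averaging gives (ii)$\Rightarrow$(iii); the paper carries out the averaging explicitly, obtaining $\widehat a_g=0$ on even elements and $\widehat a_\sigma=2c$ on odd ones. Differences of lifts give the torsor statements.

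Two sign slips in your formulas should be fixed, although neither affects the logic.

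\textbf{The cocycle identity.} Pullback is $\C$-linear and commutes with conjugation. Hence $(hk)^*\Omega=k^*\bigl(\chi(h)\Omega^{(h)}\bigr)$ puts the conjugation on $\chi(k)$ when $h$ is odd, so
\[
\chi(hk)=\chi(h)\,\chi(k)^{(-1)^{\epsilon(h)}}.
\]
This is precisely the paper's phase law, so no left/right hedge is needed. Your displayed identity already fails for $h$ even and $k$ odd.

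\textbf{The rescaling constant.} Replacing $\Omega$ by $e^{2\pi i b}\Omega$ leaves the even phases unchanged and adds $2b$ to the odd ones, because $\overline\Omega$ picks up $e^{-2\pi i b}$. That is, $\phi\mapsto\phi-\delta b$. So from $\tilde\phi=\delta a$ you must rescale by $e^{2\pi i a}$. Your choice $e^{-2\pi i a}$ would turn $\phi$ into $2\phi$ rather than $0$.
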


Fourth, we isolate sufficient finite-dimensional Euler--Gysin data: correction bundles, coefficient lines, clean composition squares, and an actual comparison of their twisted Euler classes. The formal theorem is stated for even correction ranks, embedding codimensions and excess ranks, so that no unspecified odd-degree permutation signs enter the proof. Its conclusion is associativity on cohomology. In the surface case there is a more concrete explanation: a rotation of almost complex structure turns the entire graded group into a group of almost complex automorphisms.

\begin{theorem}[Calabi--Yau surface realization]\label{thm:CY2-intro}
Let $(X,\omega,J,\Omega)$ be a compact connected K\"ahler surface with a nowhere-vanishing holomorphic two-form. Let a finite abelian group $G$ act effectively preserving $\omega,J,\Omega$, and let $\tau$ be an anti-holomorphic anti-symplectic involution satisfying
\[
 \tau^*\Omega=\overline\Omega,\qquad \tau g\tau=g^{-1}.
\]
Set $\widehat G=G\rtimes\langle\tau\rangle$. There is a $\widehat G$-invariant almost complex structure $J_\Omega$, determined by the given metric and $\operatorname{Re}\Omega$, such that the proposed group-labelled surface product is the ordinary Chen--Ruan global quotient product for $(X,J_\Omega)$. Its $\widehat G$-invariant part is therefore a canonical unital associative graded algebra,
\[
 H^*_{\Ori}(X,\widehat G)\cong
 H^*_{\CR}([(X,J_\Omega)/\widehat G];\R).
\]
Nontrivial even sectors have shift $2$, odd sectors have shift $1$, and every binary product involving an odd label is the oriented clean intersection Gysin product with zero obstruction bundle. The even group-labelled restriction is the original Chen--Hu product.
\end{theorem}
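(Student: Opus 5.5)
The plan is to construct $J_\Omega$ explicitly from the hyperkähler-type triple. Let $g$ be the Kähler metric, and normalize $\Omega$ so that $\Omega\wedge\overline\Omega=2\omega^2$ up to a constant; the normalization is possible because $|\Omega|$ is constant on a compact Calabi--Yau surface. Write $\Omega=\omega_2+i\omega_3$ with $\omega_2=\operatorname{Re}\Omega$. Define $J_\Omega$ by $\omega_2=g(J_\Omega\cdot,\cdot)$. Pointwise, $(\omega,\omega_2,\omega_3)$ is an orthonormal self-dual triple, so $J_\Omega$ is an almost complex structure; in fact it is the complex structure $J_2$ of the hyperkähler sphere.

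Invariance then needs to be checked. Each $g\in G$ preserves $g$, $\omega$ and $\Omega$, hence preserves $\omega_2$ and therefore $J_\Omega$. For $\tau$ we have $\tau^*g=g$, $\tau^*\omega=-\omega$ and $\tau^*\Omega=\overline\Omega$, so $\tau^*\omega_2=\omega_2$ and $\tau$ also commutes with $J_\Omega$. It follows that all of $\widehat G$ acts by $J_\Omega$-automorphisms. Since $\widehat G$ is finite, the ordinary global-quotient Chen--Ruan theory of $[(X,J_\Omega)/\widehat G]$ in the sense of \cite{FantechiGottsche,JKK} applies, and its $\widehat G$-invariant part is unital, associative and graded. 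This part of the argument is formal once the rotation is in place.

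Next I compute the sectors and ages with respect to $J_\Omega$. Take a nontrivial $g\in G$. It preserves $\Omega$, so on $T_pX$ it lies in $SU(2)$ for $J$. Because $J$ and $J_\Omega$ are both in the quaternionic family commuting with $Sp(1)=SU(2)$, $g$ is $J_\Omega$-complex-linear with the same eigenvalues. Effectiveness makes the fixed points isolated with eigenvalues $\lambda,\bar\lambda$, giving age $1$ and shift $2$. Now take an odd $\sigma=g\tau$. It is $J_\Omega$-linear, squares to $1$ since $\tau g\tau=g^{-1}$, and is anti-symplectic for $\omega$. Hence $X^\sigma$ is a $J_\Omega$-complex curve of real codimension $2$, namely a special Lagrangian for the original structure, and the normal eigenvalue is $-1$. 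This gives age $\tfrac12$ and shift $1$.

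Then I compare products. The even labelled restriction agrees with Chen--Hu because on even sectors the $J$- and $J_\Omega$-ages and normal orientations coincide. For any pair involving an odd label, the Chen--Ruan obstruction bundle rank formula $\age(g)+\age(h)+\age((gh)^{-1})-\operatorname{codim}_{\C}$ has to be evaluated case by case. For (odd, odd, even nontrivial) the ages are $\tfrac12+\tfrac12+1$ with triple intersection codimension $2$, and for (odd, even, odd) the count is similar. In each case the total is an integer equal to the excess dimension, so the obstruction rank is $0$. I would verify this locally through the $J_\Omega$-normal character decomposition, splitting the normal space into the two $J_\Omega$-lines. The main obstacle is the last identification: the ``proposed group-labelled surface product'' uses oriented clean-intersection Gysin maps with orientations coming from the real-coefficient data. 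I must show these orientations agree with the complex orientations induced by $J_\Omega$ on $X^\sigma$ and its normal bundle. I would first check this in the flat local model $\C^2$ with the standard hyperkähler triple, and then extend by connectedness of the fixed components.
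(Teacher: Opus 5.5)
Your route is the paper's: rotate by $h^{-1}\operatorname{Re}\Omega$, check $\widehat G$-invariance, read off ages $1$ and $\tfrac12$, show every odd obstruction rank is zero, match orientations, and invoke Chen--Ruan/Fantechi--G\"ottsche associativity. But the first step fails as written. You assert that the normalization $\Omega\wedge\overline\Omega=2\omega^2$ ``is possible because $|\Omega|$ is constant on a compact Calabi--Yau surface.'' The hypotheses only give a K\"ahler metric, not a Ricci-flat one.

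Write $\Omega\wedge\overline\Omega=f\,\omega^2$. Then the Ricci form is $\pm i\partial\bar\partial\log f$, and a pluriharmonic function on compact $X$ is constant. Hence $f$ is constant exactly when $\omega$ is Ricci-flat. The paper's quartic K3 examples use the restricted Fubini--Study form, which is not Ricci-flat.

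In general, then, the endomorphism $A$ with $h(A\cdot,\cdot)=\operatorname{Re}\Omega$ satisfies $A^2=-c^2\operatorname{Id}$ for a nonconstant function $c>0$. No constant rescaling of $\Omega$ fixes this, so your $J_\Omega$ is not an almost complex structure. The repair is the paper's:
\begin{itemize}
\item Define $c$ intrinsically by $A^2=-c^2\operatorname{Id}$ and put $J_\Omega=c^{-1}A$.
\item Since $\widehat G$ preserves $h$ and $\operatorname{Re}\Omega$, it preserves $A$, hence $c$ and $J_\Omega$.
\end{itemize}
There is no hyperk\"ahler sphere or integrable $J_2$ here. Only the pointwise relation $AJ=-JA$ is available, but it is all your $SU(2)$ age argument and the almost complex Chen--Ruan theory require. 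The orientation check is also pointwise, so no flat-model-plus-connectedness step is needed: for $v$ tangent to $L_\sigma$, $\operatorname{Re}\Omega(v,J_\Omega v)=c\,h(J_\Omega v,J_\Omega v)>0$.

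Your plan to verify the rank count through a $J_\Omega$-normal character decomposition does not work for most of the relevant pairs. We have $\sigma\rho=g$ while $\rho\sigma=g^{-1}$, and $g\sigma=\sigma g^{-1}$, so these pairs commute only when $g^2=1$; in the Eisenstein example none do. With no simultaneous eigenline splitting, you must use the age--rank formula as a general theorem valid for noncommuting pairs, namely Chen--Ruan's orbifold Riemann--Roch or Hepworth's representation-theoretic formula, as the paper does. You should also list the rows $(\sigma,\sigma)$ and $(1,\sigma)$, both giving $\tfrac12+\tfrac12+0-1=0$.

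For the even restriction, equal ages and orientations are not the whole comparison. The pair $(g,g^{-1})$ has rank zero. For $g,h,gh\neq1$ the obstruction is a rank-one bundle over points, whose Euler class vanishes for both $J$ and $J_\Omega$. The ambient orientations agree because $J_t=(\cos t)J+(\sin t)J_\Omega$ is a path of almost complex structures.
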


Theorem~\ref{thm:phase-obstruction-intro} shows that the vanishing of $[\mu_{\R}]$ is equivalent to the existence of the phase-trivial normalization used in Theorem~\ref{thm:CY2-intro}.

Finally, we compute the resulting algebra in a compact example with nontrivial even and odd sectors.  Let $E_\omega=\C/(\Z+\omega\Z)$ for $\omega=e^{2\pi i/3}$ and put $X=E_\omega\times E_\omega$.  A generalized dihedral action generated by
\[
 r(z_1,z_2)=(\omega z_1,\omega^{-1}z_2),
 \qquad
 \tau(z_1,z_2)=(-\overline z_1,-\overline z_2)
\]
is phase-trivial.  After passing to conjugacy invariants, the orientifold state space is $16$-dimensional and its multiplication can be written explicitly.  In particular, the odd-sector multiplication is noncommutative in the shifted grading.

\begin{theorem}[Eisenstein orientifold algebra]\label{thm:eisenstein-intro}
For the action above, the centralizer-invariant state space is a $16$-dimensional associative Frobenius algebra.  With the notation of Section~\ref{sec:examples}, its multiplication is generated by
\[
 \Theta^2=6q,
 \qquad
 U_pU_{p'}=2\delta_{p,p'}q,
\]
\[
 E^2=3(W-\Theta),
 \qquad
 \Theta E=-6T,
 \qquad
 U_pE=2T,
\]
\[
 ET=TE=3q,
 \qquad
 AB=3q,
 \qquad
 BA=-3q,
\]
where $W=\sum_{p\in X^r}U_p$.  The trace pairing is nondegenerate.
\end{theorem}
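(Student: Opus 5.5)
The plan is to reduce the statement to Theorem~\ref{thm:CY2-intro} and then compute. For $X=E_\omega\times E_\omega$ with $\Omega=dz_1\wedge dz_2$ we have $r^*\Omega=\Omega$ and $\tau^*\Omega=\overline\Omega$. We also have $\tau r\tau=r^{-1}$, since $\tau(\omega\overline{z})=\overline\omega z$ up to the sign.

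To apply Theorem~\ref{thm:CY2-intro} we still need an effective phase-trivial action. I would take $G=\langle r\rangle\cong\Z_3$, enlarged if necessary by the translations implicit in the notation of Section~\ref{sec:examples}. The algebra is then $H^*_{\CR}([(X,J_\Omega)/\widehat G];\R)$, which is associative and unital by that theorem. Its Frobenius pairing is the standard orbifold Poincaré pairing, pairing sector $g$ with sector $g^{-1}$. Since fixed loci are compact oriented manifolds, nondegeneracy reduces to Poincaré duality on each fixed component together with the fact that $g\mapsto g^{-1}$ permutes conjugacy classes.

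Next I would enumerate the sectors and their centralizer invariants.

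\textbf{Untwisted sector.} This is $H^*(X)^{\widehat G}$. The $r$-invariants in $H^1$ vanish, because $r$ acts on $H^{1,0}$ by $\omega,\omega^{-1}$ with no trivial weight. So the invariants are $1$, the volume class $q$, and the $\widehat G$-invariant part of $H^2$. The class $\Theta$ will be an invariant Kähler-type class in $H^2$ with $\int\Theta^2=6$, and $E$ and $T$ presumably come from the remaining invariant $(1,1)$ or real classes of $\operatorname{Re}\Omega$-type.

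\textbf{Even sectors.} $X^r$ consists of $9$ points, the $3\times 3$ products of the fixed points of $\omega$ on $E_\omega$. These give classes $U_p$ of shift $2$. The sectors $r$ and $r^2$ are identified under conjugation by $\tau$, and this matches the relation $U_pU_{p'}=2\delta_{p,p'}q$.

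\textbf{Odd sectors.} The reflections $\tau r^k$ form a single conjugacy class of size $3$. Their fixed loci are real $2$-tori, possibly disconnected, with shift $1$, giving the generators $A$ and $B$ in degrees $1$ and $2$ after shifting. Counting invariant dimensions should give $16$. I would verify this count explicitly and adjust the identification of $E$, $T$ and $A$, $B$ to match.

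Then I would compute the products using the description in Theorem~\ref{thm:CY2-intro}.

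\textbf{Even--even products.} These follow Chen--Hu. For isolated points with age $1$, the product $U_p\cdot U_{p'}$ lands in the $r\cdot r^{-1}=1$ sector, and its obstruction bundle has rank $0$. The product is therefore the point class times the orbit-count factor, which gives $2\delta_{pp'}q$.

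\textbf{Untwisted--twisted products.} $U_pE$ and $\Theta E$ are restrictions of untwisted classes to fixed loci followed by Gysin pushforward. Their coefficients $2$ and $-6$ come from the integrals of $E$ and $\Theta$ over the relevant cycles.

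\textbf{Odd--odd products.} By Theorem~\ref{thm:CY2-intro} these are oriented clean-intersection Gysin products with zero obstruction. The computation of $AB$ reduces to signed intersection numbers of the Lagrangian tori $X^{\tau}$ and $X^{\tau r}$, pushed forward into the $r$- or identity sector.

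I expect the main obstacle to be these odd-sector calculations, in particular the sign giving $AB=-BA$. That sign depends on the orientations of the tori induced by $J_\Omega$ and on the Koszul sign for degree-$1$ shifted classes. I would fix the orientations by first checking $\tau^*\operatorname{Re}\Omega=\operatorname{Re}\Omega$. I would then compute the intersection numbers in the lattice coordinates, summing over the three reflections and dividing by the centralizer orders, and cross-check associativity on the triple products $E^2$, $ET$ and $\Theta E$ to confirm the constants.
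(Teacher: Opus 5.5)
Your route to associativity and nondegeneracy through the surface theorem with $G=\langle r\rangle$ is sound. This $G$ is already effective and phase-trivial; adjoining translations would change the sectors. For the invariant pairing you must also average a pairing partner. The genuine gap is that you misidentify the state space, and every product computation inherits the error.

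The invariant identity sector is only $\R\langle 1,\Theta,q\rangle$. In particular $H^1(X)^{\widehat G}=H^3(X)^{\widehat G}=0$, so $E$ (shifted degree $1$) and $T$ (shifted degree $3$) cannot be ambient classes. They are the orbit sums of $1\in H^0(L_i)$ and of the normalized top class $t_i\in H^2(L_i)$ over the three reflection tori $L_i=X^{r^i\tau}$. Likewise $A$ and $B$ are orbit sums of an oriented basis $a_i,b_i$ of $H^1(L_i)$, and \emph{both} have shifted degree $2$. The count $3+9+4=16$ uses two facts: the centralizer $\{1,s_0\}$ of $s_0$ acts trivially on the connected torus $L_0$, and $\tau$ fixes each point of $F$, so $U_p=u_{1,p}+u_{2,p}$ is an orbit sum. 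The stated constants are for these plain orbit sums. Also, $\Theta$ is not of K\"ahler type, since K\"ahler classes are $\tau$-anti-invariant; in fact $\Theta=-2[\operatorname{Re}\Omega]$.

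Consequently the product mechanisms you propose are not the right ones. $U_p$ lives in the point sectors of $r$ and $r^2$, not in the untwisted sector. The factor $2$ in $U_pU_{p'}$ comes from the two cross terms $u_{1,p}u_{2,p'}+u_{2,p}u_{1,p'}$; the equal-label terms vanish because of a rank-one obstruction over a point. $U_pE=2T$ holds because each output reflection label receives exactly two rotation--reflection channels, each pushing $p$ forward to the normalized top class of that torus.

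For $AB$, the terms with $i\ne j$ are $a_i\star b_j=\sum_p a_i|_p\,b_j|_p\,u_{i-j,p}=0$, because degree-one classes restrict to zero at points. So intersections of distinct tori play no role, and only the self-Gysin terms $(\iota_i)_*(a_i\wedge b_i)=q$ survive. The relation $BA=-AB$ is simply $b_i\wedge a_i=-a_i\wedge b_i$. No Koszul rule in the shifted grading can produce it, since $|A|=|B|=2$; this is exactly why the algebra is not graded-commutative.

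Above all, you never supply the geometry that fixes the constants. That geometry is the explicit tori $L_0=C_0\times C_0$, $L_1=C_1\times C_2$, $L_2=C_2\times C_1$, with Poincar\'e duals $P_i$ satisfying $P_iP_j=9q$ for $i\ne j$ and $P_0+P_1+P_2=-3\Theta$. It also includes $L_i\cap L_j=F$ with all nine local signs $+1$, and $\iota_0^*\Theta=-6t_0$. These give $E^2=\sum_iP_i+\sum_{i\ne j}e_i\star e_j=-3\Theta+3W$ and $\Theta E=-6T$. Your plan never derives $E^2$, and associativity cross-checks cannot determine these structure constants.
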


The further examples in Sections~\ref{subsec:one-reflection}--\ref{subsec:quartic-k3} separate several phenomena that the Eisenstein computation alone does not test. Standard and affine conjugation on the same square abelian surface give algebras of dimensions $24$ and $8$, respectively. Three signed real quartics, all isomorphic over $\C$, give dimensions $15$, $16$, and $12$. The spherical real locus produces the nonzero normal-Euler relation $e^4=-2q$. The torus-real K3 algebra has the same graded dimensions as the Eisenstein algebra but has $e^4=0$, proving that the two graded algebras are not isomorphic.

\paragraph{Relation to existing orbifold and rotation constructions.}
Equivariant unoriented TQFTs and extended Frobenius algebras provide an established approach to unoriented operations; see Sweet \cite{Sweet} and Young \cite{YoungDW}. We do not construct or compare all of those operations here. Our formulas organize even and reflection fixed loci in a common group-labelled correspondence picture. In complex dimension two, however, this description does \emph{not} produce a cohomology ring outside ordinary Chen--Ruan theory: Theorem~\ref{thm:CY2-intro} identifies it with that theory after rotation.

The geometric rotation principle is also established. In the K3 setting it is commonly called Donaldson's trick; see \cite[Section~3]{YoshikawaRealK3} and \cite[Remark~2.3 and Proposition~2.4]{ItenbergMikhalkin} and the earlier references there. Biswas--Wilkin \cite{BiswasWilkin} show that a suitable anti-holomorphic involution on a hyper-K\"ahler manifold becomes holomorphic after rotation, and its fixed locus becomes complex Lagrangian. Below we give the elementary, pointwise almost complex construction needed for the whole finite group on a surface; integrability of the rotated structure is unnecessary for Chen--Ruan cohomology \cite{ChenRuan}. Thus the role of the surface theorem is an explicit identification and computation using existing structures, not the introduction of an independent generalized cohomology theory.

\paragraph{\textbf{Relation to companion analytic work.}}
A related companion manuscript of the author studies a different problem: the analytic Real Cauchy--Riemann sewing underlying graded inertia composition, at the level of real family indices and a $KO$-valued excess identity.  That analytic theorem is not used in the principal results of the present paper.  In particular, Theorem~\ref{thm:phase-obstruction-intro} is a group-cohomological Bockstein statement, while Theorems~\ref{thm:CY2-intro} and \ref{thm:eisenstein-intro} are proved directly by finite-dimensional clean-intersection and de Rham/Gysin arguments.  The arguments below are self-contained relative to the cited ordinary orbifold results and do not depend on a comparison with the contents of that companion manuscript.

\paragraph{\textbf{Scope.}}
The general realization theorem requires explicit Euler class comparison, not merely equality of stable real bundles or family indices. Its even-rank restriction is a sufficient hypothesis covering the complex and surface examples treated here; an arbitrary odd-rank extension would require additional sign conventions and proofs. Phase normalization is not, in arbitrary dimension, an existence criterion for these Euler--Gysin data. The surface construction is canonically defined relative to the specified Calabi-Yau data and is identified with a rotated ordinary orbifold ring. We do not assert a strict chain level differential graded algebra, a universal higher-dimensional odd-sector product, a twisted $KR$-theoretic realization, or a Real Gromov-Witten construction.

The paper is organized as follows: Section~\ref{sec:inertia} develops the closed sector Real Chen-Hu model.  Section~\ref{sec:odd} proves the odd-age no-go theorem and the global phase obstruction theorem.  Section~\ref{sec:admissible} develops the general Euler-Gysin realization and proves the Calabi-Yau surface existence theorem.  Section~\ref{sec:examples} contains the dihedral examples, the explicit Eisenstein algebra, and five additional square torus and real quartic K3 examples.  Section~\ref{sec:comparison} compares the construction with previous approaches and discusses further directions.  Appendix~\ref{app:det} records determinant line and excess intersection conventions.

\section{Real inertia and the closed-sector de Rham model}\label{sec:inertia}

\subsection{Finite graded actions}

Let $(X,J)$ be a compact almost complex manifold and let \eqref{eq:graded-group-intro} be a finite graded group.  We assume
\begin{equation}\label{eq:parity-action}
  g_*J=Jg_*\quad(g\in G),\qquad
  \sigma_*J=-J\sigma_*\quad(\epsilon(\sigma)=1).
\end{equation}
The grading $\epsilon$ refers to anti-complex or orientifold reversal, not necessarily to reversal of the real orientation of $X$: an anti-complex map in complex dimension $n$ has real orientation sign $(-1)^n$. In particular, odd elements preserve the ambient orientation on a complex surface.

For most statements involving odd fixed loci we further assume that the odd element under consideration is an involution.  This reflection-type hypothesis is sufficient for the examples and avoids the additional one-sided holonomy carried by an odd element with nontrivial even square.

The unreduced Real inertia space is
\begin{equation}\label{eq:real-inertia}
  I_{\Ori}X=\coprod_{\gamma\in\widehat G}X^\gamma
  =I^+X\sqcup I^-X,
\end{equation}
where
\[
 I^+X=\coprod_{g\in G}X^g,\qquad
 I^-X=\coprod_{\epsilon(\sigma)=1}X^\sigma.
\]
For nonabelian groups the quotient-stack sector associated to $\gamma$ is $[X^\gamma/C_{\widehat G}(\gamma)]$.  We keep the unreduced notation when writing local correspondences and take centralizer invariants when passing to quotient-stack cohomology.

\begin{proposition}[Odd fixed loci]\label{prop:totally-real}
Let $\sigma$ be an odd involution.  Then $L_\sigma=X^\sigma$ is totally real:
\[
 T L_\sigma\cap J T L_\sigma=0.
\]
If $X$ is symplectic, $J$ is compatible, and $\sigma^*\omega=-\omega$, then $L_\sigma$ is Lagrangian.
\end{proposition}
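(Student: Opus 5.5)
The plan is to work pointwise on the tangent space and linearize $\sigma$ there. Let $x\in L_\sigma$. Since $\sigma$ is an involution of a compact manifold, it preserves some Riemannian metric: average any metric over $\langle\sigma\rangle$. The exponential map of this metric at $x$ conjugates $\sigma$ near $x$ to its differential $A=d\sigma_x$. Hence $L_\sigma$ is a smooth submanifold near $x$, with
\[
T_xL_\sigma=\ker(A-1)=:V_+ .
\]
Since $A^2=1$, we have the eigenspace splitting $T_xX=V_+\oplus V_-$, where $V_-=\ker(A+1)$.

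The key step is the anti-linearity relation $AJ=-JA$ from \eqref{eq:parity-action}. If $v\in V_+$, then
\[
A(Jv)=-JAv=-Jv,
\]
so $J$ maps $V_+$ into $V_-$. Symmetrically, $J$ maps $V_-$ into $V_+$. It follows that
\[
T_xL_\sigma\cap JT_xL_\sigma\subset V_+\cap V_-=0,
\]
which is the totally real condition. Moreover, $J$ restricts to an isomorphism $V_+\cong V_-$, so $\dim V_+=\dim V_-=n$. Thus $L_\sigma$ has middle dimension wherever it is nonempty.

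For the Lagrangian statement, take $u,v\in V_+$. Because $\sigma^*\omega=-\omega$,
\[
\omega(u,v)=\omega(Au,Av)=-\omega(u,v),
\]
so $\omega|_{V_+}=0$. Combined with $\dim V_+=n$, this shows $L_\sigma$ is Lagrangian. Compatibility of $J$ is only needed to identify the normal bundle with $J\,TL_\sigma$. It is not needed for isotropy.

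The only step with any real content is the local linearization of the fixed locus, which is the standard fact that fixed sets of compact group actions are submanifolds. I expect no genuine obstacle beyond stating that fact.
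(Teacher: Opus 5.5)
Your proof is correct and follows the paper's argument: because $d\sigma_x$ anticommutes with $J$, it sends $+1$-eigenvectors to $-1$-eigenvectors, which gives total reality, and $\sigma^*\omega=-\omega$ gives isotropy. You also justify two points the paper only asserts: the identification $T_xL_\sigma=\ker(d\sigma_x-1)$, via equivariant linearization, and the half-dimensionality $\dim V_+=n$, via $J$ interchanging $V_+$ and $V_-$; and, as you note, compatibility of $J$ is not used for the Lagrangian conclusion.
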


\begin{proof}
If $v\in T_xL_\sigma$, then $\sigma_*v=v$.  By \eqref{eq:parity-action},
$\sigma_*(Jv)=-Jv$.  If $Jv$ were tangent to $L_\sigma$, it would also be fixed by $\sigma_*$, hence $Jv=-Jv$ and $v=0$.  This proves total reality.  In the anti-symplectic case, for $u,v\in T_xL_\sigma$,
\[
 \omega(u,v)=(\sigma^*\omega)(u,v)=-\omega(u,v),
\]
so $L_\sigma$ is isotropic.  Its real dimension is half the real dimension of $X$, and it is therefore Lagrangian.
\end{proof}

The proposition already shows a basic obstruction to copying Chen--Ruan theory verbatim: an odd fixed component does not inherit the complex normal eigenbundle decomposition from which the ordinary age is defined.

\begin{proposition}[Normal bundle of a reflection locus]\label{prop:normal-JTL}
For an odd involution $\sigma$, the almost complex structure induces a canonical real-bundle isomorphism
\[
 J:T L_\sigma\stackrel{\cong}{\longrightarrow}N_{L_\sigma/X}.
\]
In particular,
\[
 o(N_{L_\sigma/X})\cong o(TL_\sigma).
\]
\end{proposition}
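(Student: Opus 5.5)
The plan is to work pointwise along $L_\sigma$ and then note that every construction is smooth and canonical, so the pointwise isomorphisms assemble into an isomorphism of real vector bundles. Fix $x\in L_\sigma$. Since $\sigma$ is an involution fixing $x$, the differential $\sigma_*$ acts on $T_xX$ as a linear involution. We therefore get the eigenspace splitting
\[
 T_xX=V_+\oplus V_-.
\]
By the standard linearization of a finite group action near a fixed point (average a metric and use the exponential map), $V_+=T_xL_\sigma$, and $L_\sigma$ is a smooth submanifold. Because $V_-$ is canonically a complement to $T_xL_\sigma$, the quotient map $T_xX\to N_x=T_xX/T_xL_\sigma$ restricts to a canonical isomorphism $V_-\cong N_x$. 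This holds smoothly in $x$, since the projector $\tfrac12(1-\sigma_*)$ is a smooth bundle endomorphism of $TX|_{L_\sigma}$ of constant rank on each component.

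The key step uses \eqref{eq:parity-action}. Anti-commutation $\sigma_*J=-J\sigma_*$ means that $J$ maps $V_+$ into $V_-$ and $V_-$ into $V_+$. This is the same computation as in Proposition~\ref{prop:totally-real}: if $\sigma_*v=v$ then $\sigma_*Jv=-Jv$. Since $J^2=-1$, the restriction $J\colon V_+\to V_-$ is injective, with inverse $-J\colon V_-\to V_+$, so it is an isomorphism.

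Composing with $V_-\cong N_{L_\sigma/X}$ gives the map
\[
 TL_\sigma\xrightarrow{J}V_-\xrightarrow{\cong}N_{L_\sigma/X},
\]
that is, $v\mapsto [Jv]$. We could also define this map directly as $v\mapsto Jv \bmod TL_\sigma$, with no reference to $V_-$. It is injective because $TL_\sigma\cap JTL_\sigma=0$ by Proposition~\ref{prop:totally-real}, and the ranks agree, so it is bijective. This direct description makes it clear that the isomorphism is canonical: it depends only on $J$ and not on any auxiliary metric.

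The orientation statement then follows formally. A real bundle isomorphism induces an isomorphism of top exterior powers, so $\Lambda^{\mathrm{top}}TL_\sigma\cong\Lambda^{\mathrm{top}}N_{L_\sigma/X}$, and hence of their orientation double covers and local systems, $o(TL_\sigma)\cong o(N_{L_\sigma/X})$.

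The only point needing care, which I expect to be the mild obstacle, is the dimension count that makes $V_-\cong N$ and the $J$-map bijective. Showing that $\dim L_\sigma$ is half of $\dim X$ in general, without the Lagrangian hypothesis, comes for free from $J\colon V_+\cong V_-$: indeed $T_xX=V_+\oplus V_-$ with $\dim V_+=\dim V_-$. So the rank equality is supplied by the same step and needs no separate argument.
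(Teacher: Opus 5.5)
Your proof is correct and follows the paper's route: the map $v\mapsto Jv \bmod TL_\sigma$ is injective by total reality (Proposition~\ref{prop:totally-real}) and bijective by a rank count, and the orientation statement is read off from determinant lines. Your additional observation that $J$ exchanges the $\pm1$ eigenspaces of $\sigma_*$ is worth keeping. It identifies the paper's complement $JT_xL_\sigma$ with the $-1$ eigenspace, and it actually proves the equality $\dim_{\mathbb R}L_\sigma=\dim_{\mathbb C}X$, which the paper's proof takes for granted.
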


\begin{proof}
By Proposition~\ref{prop:totally-real}, $T_xL_\sigma\cap JT_xL_\sigma=0$.  Since $L_\sigma$ has real dimension equal to $\dim_\C X$, one has the direct sum decomposition
\[
 T_xX=T_xL_\sigma\oplus JT_xL_\sigma.
\]
Projection to the quotient $T_xX/T_xL_\sigma$ identifies $JT_xL_\sigma$ with the normal space.  These identifications vary smoothly in $x$.
\end{proof}

Thus nonorientability of a Real fixed locus is reflected directly in its normal orientation local system.  For example, the standard real locus $\mathbb{RP}^2\subset\mathbb{CP}^2$ is nonorientable, while $\mathbb{RP}^1\subset\mathbb{CP}^1$ is orientable.  This is one reason that odd-sector coefficient systems must be treated geometrically rather than assigned by a universal sign convention.

\subsection{The even Chen--Hu complex}

We recall the part of the Chen--Hu model that will be used below.  Assume in this subsection that $G$ is finite abelian.  For a connected component $F\subset X^g$, let $\age(g,F)$ be the usual degree-shifting number.  Write
\begin{equation}\label{eq:CH-complex}
 \Aplus(X,G)
 =\left(\bigoplus_{g\in G}\ \bigoplus_{F\subset X^g}
 \Omega^{\bullet-2\age(g,F)}(F;\C)\right)^G.
\end{equation}
The shifts are rational and componentwise. Abelianity does not make the action on the fixed-locus cohomology trivial; the invariants in \eqref{eq:CH-complex} are essential. The product is first written on the unreduced group-labelled sum and then restricted to invariants \cite{FantechiGottsche}. We use the standard Chen--Hu/Chen--Ruan product on its \emph{cohomology} \cite{ChenHu,ChenRuan}. At the level of ordinary forms, a displayed Gysin expression means a representative constructed with specified Thom forms and tubular neighborhoods; its cohomology class is independent of those choices.  If $W\subset X^{g,h}$ is a connected common fixed component and
\[
 e_1:W\to F_g,\qquad e_2:W\to F_h,\qquad
 m:W\to F_{gh}
\]
are the natural maps, then the $W$-contribution is
\begin{equation}\label{eq:CH-product}
 \alpha_g\star_W\beta_h
 =m_*\bigl(e_1^*\alpha_g\wedge e_2^*\beta_h\wedge e(E_{g,h})\bigr).
\end{equation}
Let
\[
 r_W=\rk_\C E_{g,h}|_W,
 \qquad c_W=\operatorname{codim}_\C(W,F_{gh}).
\]
Degree compatibility gives the familiar identity
\begin{equation}\label{eq:age-rank}
 \age(g,F_g)+\age(h,F_h)-\age(gh,F_{gh})=r_W+c_W.
\end{equation}
For abelian global quotients this can also be verified line-by-line from the simultaneous eigenbundle decomposition; compare \cite{ChenHu}.

For general almost complex orbifolds, Hu--Wang \cite[Theorems~3.2 and 3.8]{HuWang} describe the complex obstruction class in rational orbifold $K$-theory and encode the Chen--Ruan product by a formal Thom pushforward. Their formal fractional Thom classes are not ordinary differential forms of fractional degree. This distinction is consistent with our use of \eqref{eq:CH-product} on cohomology and supplies no strict associative multiplication on the raw forms in \eqref{eq:CH-complex}.

\subsection{Weight bookkeeping in the Chen--Hu product}

We recall a local calculation which will be used repeatedly.  Let $W$ be a connected component of $X^{g,h}$ and decompose the normal representation over $W$ into simultaneous complex eigenbundles $V_j$.  Write the fractional weights of $g$ and $h$ on $V_j$ as $a_j,b_j\in[0,1)$.  The $gh$-weight is $\{a_j+b_j\}$.  There are three possibilities.
\begin{enumerate}[label=\textup{(\alph*)}]
\item If $a_j+b_j<1$, the direction contributes neither to the obstruction bundle nor to the Gysin normal bundle.
\item If $a_j+b_j=1$, the output $gh$ acts trivially in that direction.  The direction is normal to $W$ inside the output fixed locus and contributes one complex Gysin codimension.
\item If $a_j+b_j>1$, one integer carry occurs and the direction contributes one complex obstruction direction.
\end{enumerate}
Hence, line by line,
\[
 a_j+b_j-\{a_j+b_j\}
 =\begin{cases}
 0,&a_j+b_j<1,\\
 1,&a_j+b_j\geq1,
 \end{cases}
\]
with the equality case assigned to the Gysin normal direction and the strict inequality case to the obstruction direction.  Summing gives \eqref{eq:age-rank}.  This is the elementary bookkeeping behind both the degree shift and the sign cancellation in Theorem~\ref{thm:closed-real}.

The same calculation also explains why the reflection problem is different.  For an anti-linear involution there is no invariant fractional eigenvalue $a_j$ in $[0,1)$: a phase in an expression $z\mapsto e^{2\pi i\theta}\bar z$ can be removed by a unitary change of coordinates.  Thus the above line-by-line carry rule has no single-sector odd analogue.

\subsection{Reflection of closed sectors}

Fix an odd involution $\tau\in\widehat G\setminus G$.  Since $G\triangleleft\widehat G$, conjugation by $\tau$ preserves $G$.  Define
\begin{equation}\label{eq:q}
 q(g)=\tau g^{-1}\tau^{-1}.
\end{equation}
If $F\subset X^g$, then $\tau F\subset X^{q(g)}$.

\begin{lemma}[Age preservation]\label{lem:age-preservation}
For every connected component $F\subset X^g$,
\[
 \age(q(g),\tau F)=\age(g,F).
\]
\end{lemma}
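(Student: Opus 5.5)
The plan is to compute the age pointwise at a single point and track how the eigenvalues of the normal representation change under the two operations making up $q$: inversion and conjugation by the anti-complex map $\tau$. Recall that for $x\in F$ the age is $\age(g,F)=\sum_j a_j$. Here $T_xX$ is decomposed into $g$-eigenspaces with eigenvalues $e^{2\pi i a_j}$, $a_j\in[0,1)$, counted with complex multiplicity. This sum is locally constant on $F$, so it suffices to compare the ages at $x$ and at $\tau x\in\tau F$.

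First step: check that $\tau F$ is a connected component of $X^{q(g)}$. If $gx=x$, then $q(g)(\tau x)=\tau g^{-1}\tau^{-1}\tau x=\tau g^{-1}x=\tau x$. Since $\tau$ is a diffeomorphism carrying $X^g$ onto $X^{q(g)}$, it maps components to components.

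Second step: compare the linear actions. The differential $\tau_*:T_xX\to T_{\tau x}X$ intertwines $q(g)_*$ on $T_{\tau x}X$ with $(g^{-1})_*$ on $T_xX$. Since $g^{-1}$ acts complex linearly, its eigenspaces are the same $V_j$ as those of $g$, with eigenvalues $e^{-2\pi i a_j}$. By \eqref{eq:parity-action}, $\tau_*$ is anti-linear. Take $v\in V_j$ with $g^{-1}_*v=e^{-2\pi i a_j}v$. Then
\[
q(g)_*(\tau_*v)=\tau_*\bigl(e^{-2\pi i a_j}v\bigr)=e^{2\pi i a_j}\,\tau_*v .
\]
So $\tau_*V_j$ is the $e^{2\pi i a_j}$-eigenspace of $q(g)$ at $\tau x$. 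It has the same complex dimension, because an anti-linear isomorphism preserves complex dimension.

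Third step: read off the weights. The normalized weights of $q(g)$ at $\tau x$ are exactly the $a_j\in[0,1)$, with the same multiplicities, so the age sums agree. The main obstacle is making sure the two complex conjugations cancel rather than compound. Inversion alone sends $a_j$ to $1-a_j$ for $a_j\neq 0$, which would give the "complementary age". Anti-linear conjugation alone also flips weights. The composite $q(g)$ restores them, and this is exactly why the inverse appears in \eqref{eq:q}. The only care needed is to verify the anti-linearity identity $\tau_*(\lambda v)=\bar\lambda\,\tau_*v$ with respect to $J$. This follows directly from $\tau_*J=-J\tau_*$, applied to $\lambda=e^{i\theta}$ written in terms of $J$.
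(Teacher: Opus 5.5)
Your proof is correct and is the paper's argument. The paper likewise takes a normal eigenvector $v$ of $g$ with eigenvalue $e^{2\pi i\theta}$, computes $q(g)\,d\tau(v)=d\tau(g^{-1}v)=e^{2\pi i\theta}\,d\tau(v)$ from the anti-linearity of $d\tau$, and sums the preserved weights; your checks that $\tau F$ is a component of $X^{q(g)}$, that multiplicities are preserved, and that anti-linearity follows from $\tau_*J=-J\tau_*$ are details the paper leaves implicit.
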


\begin{proof}
Let $v$ be a complex normal eigenvector for $g$ with eigenvalue $e^{2\pi i\theta}$, $0\leq\theta<1$.  Then
\[
 q(g)d\tau(v)=d\tau(g^{-1}v)
 =d\tau(e^{-2\pi i\theta}v)
 =e^{2\pi i\theta}d\tau(v),
\]
because $d\tau$ is anti-complex linear.  Thus the fractional weight is preserved.  Summing the normal weights proves the claim.
\end{proof}

Define an anti-linear map on \eqref{eq:CH-complex} by
\begin{equation}\label{eq:R-def}
 \RR(\alpha_{g,F})
 =e^{\pi i\age(g,F)}
 (\tau^{-1}|_{\tau F})^*\overline{\alpha_{g,F}}.
\end{equation}
The target of \eqref{eq:R-def} is the $q(g),\tau F$ sector. Since $\tau$ normalizes $G$, this map carries $G$-invariant collections to $G$-invariant collections.

\begin{lemma}\label{lem:R-involution}
The map $\RR$ commutes with $d$ and satisfies $\RR^2=1$.
\end{lemma}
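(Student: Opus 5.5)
The plan is to verify the two claims separately. Both reduce to elementary facts about pullback by the diffeomorphism $\tau$, complex conjugation of forms, and the age being constant on each component.

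\emph{Commutation with $d$.} Fix a component $F\subset X^g$. The map $\tau^{-1}|_{\tau F}:\tau F\to F$ is a diffeomorphism, so pullback along it commutes with the exterior derivative. Complex conjugation of $\C$-valued forms also commutes with $d$, since $d$ is real. The prefactor $e^{\pi i\age(g,F)}$ is a constant on the connected component $F$. Hence $d\RR(\alpha_{g,F})=\RR(d\alpha_{g,F})$ componentwise. By Lemma~\ref{lem:age-preservation}, the target sector $\tau F\subset X^{q(g)}$ has the same shift $2\age(g,F)$, so $\RR$ preserves the shifted degree. Because $\tau$ normalizes $G$ (as noted after \eqref{eq:R-def}), $\RR$ also preserves $G$-invariants, so it is a well-defined cochain map of the complex \eqref{eq:CH-complex}.

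\emph{Involutivity.} First check that labels and components return. Since $\tau^2=1$, we have $q(q(g))=\tau(\tau g^{-1}\tau^{-1})^{-1}\tau^{-1}=\tau\tau g\tau^{-1}\tau^{-1}=g$, and $\tau(\tau F)=F$. Now apply $\RR$ twice to $\alpha=\alpha_{g,F}$. By Lemma~\ref{lem:age-preservation}, the second application uses the phase $e^{\pi i\age(q(g),\tau F)}=e^{\pi i a}$ with $a=\age(g,F)$. Because $\RR$ is anti-linear, this phase multiplies the conjugate of the first output:
\[
\RR^2\alpha=e^{\pi i a}\,(\tau^{-1}|_{F})^*\overline{e^{\pi i a}(\tau^{-1}|_{\tau F})^*\overline{\alpha}}
=e^{\pi i a}e^{-\pi i a}\,(\tau^{-1}|_{\tau F}\circ\tau^{-1}|_{F})^*\alpha .
\]
Here conjugation commutes with pullback, and $\overline{\overline\alpha}=\alpha$. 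The composite $\tau^{-2}|_F$ is the identity, so $\RR^2\alpha=\alpha$.

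The only point needing care is the phase bookkeeping in this second step, and it is also the main obstacle. One must confirm that the phase attached to the image sector equals the original one, which is exactly Lemma~\ref{lem:age-preservation}. One must also confirm that anti-linearity produces $\overline{e^{\pi i a}}=e^{-\pi i a}$, not $e^{\pi i a}$. If the ages did not match, one would instead get a residual factor $e^{\pi i(\age(q(g),\tau F)-\age(g,F))}$. So the whole content of the argument is the age-preservation lemma together with $\tau^2=1$.
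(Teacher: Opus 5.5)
Your proposal is correct and follows the same route as the paper. Naturality of pullback and complex conjugation gives $d\RR=\RR d$, and $\tau^2=1$ gives $q^2=1$ together with $\tau(\tau F)=F$. Lemma~\ref{lem:age-preservation} then makes the two phase factors in $\RR^2$ complex conjugates of each other, so they cancel. Your explicit computation of the composite pullback and of the conjugated phase spells out what the paper states in one line.
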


\begin{proof}
Commutation with $d$ follows from naturality of the exterior derivative and complex conjugation.  Since $\tau^2=1$, one has $q^2=1$.  By Lemma~\ref{lem:age-preservation} the two phase factors in $\RR^2$ are complex conjugates, hence cancel.
\end{proof}

The key point is multiplicativity on cohomology. We record the orientation sign explicitly; the following comparison is an identity of cohomology classes, or equivalently an identity of de Rham representatives modulo exact forms.

\begin{lemma}[Anti-complex sign]\label{lem:anti-complex-sign}
For a $W$-contribution \eqref{eq:CH-product}, the unphased anti-linear pullback
\[
 U(\alpha_{g,F})=(\tau^{-1}|_{\tau F})^*\overline{\alpha_{g,F}}
\]
satisfies
\begin{equation}\label{eq:U-sign}
 U(\alpha_g\star_W\beta_h)
 =(-1)^{r_W+c_W}\,
 (U\alpha_g\star_{\tau W}U\beta_h).
\end{equation}
\end{lemma}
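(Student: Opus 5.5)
Write $\phi=\tau^{-1}|_{\tau(\cdot)}$ for the diffeomorphisms $\tau F_g\to F_g$, $\tau W\to W$, $\tau F_{gh}\to F_{gh}$. The strategy is to move $U$ through each of the three operations in \eqref{eq:CH-product} (pullback, wedge with the obstruction Euler class, Gysin pushforward) and record the sign produced at each stage. Pullbacks are harmless. Since $\tau$ maps $X^{g,h}$ onto $X^{q(g),q(h)}$ and intertwines the inclusions, $\phi^*$ commutes with $e_1^*,e_2^*$. Complex conjugation commutes with pullback and wedge. Hence
\[
U\bigl(m_*(e_1^*\alpha\wedge e_2^*\beta\wedge e(E_{g,h}))\bigr)
\]
reduces to comparing $\phi^*\overline{m_*(\cdot)}$ with $m'_*\phi^*\overline{(\cdot)}$, and comparing $\phi^*e(E_{g,h})$ with $e(E_{q(g),q(h)}|_{\tau W})$. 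Here $m'$ is the inclusion $\tau W\to \tau F_{gh}$. Note also that $q(g)q(h)=\tau g^{-1}h^{-1}\tau^{-1}=q(gh)$ because $G$ is abelian, so $\tau F_{gh}$ is indeed the output sector for $U\alpha\star U\beta$.

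For the obstruction factor, the plan is to use the eigenbundle bookkeeping preceding Lemma~\ref{lem:age-preservation}. The differential $d\tau$ is an anti-linear isomorphism $E_{g,h}|_W\to E_{q(g),q(h)}|_{\tau W}$. This holds because the computation in the proof of Lemma~\ref{lem:age-preservation} shows that weights $(a_j,b_j)$ are carried to the same weights, so the carry directions (case (c)) correspond. Consequently $\phi^*E'\cong \overline{E_{g,h}}$ as complex bundles. The standard identity $e(\overline E)=(-1)^{\rk_\C E}e(E)$ for the real orientation then gives $\phi^*e(E')=(-1)^{r_W}e(E_{g,h})$. The Euler class is a real form, so conjugation does not affect it. If one uses the Chen--Hu explicit representative, I would check that its twist factors transform compatibly; at the level of cohomology this is automatic.

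For the Gysin factor, naturality of pushforward under diffeomorphisms gives $\phi^*m_*=\pm m'_*\phi^*$. The sign is the one by which $\phi$, restricted to normal bundles, compares the complex orientations of $N_{W/F_{gh}}$ and $N_{\tau W/\tau F_{gh}}$; the orientations of $F_{gh}$ and $W$ enter in the same way. I would use Thom-form pushforward: $\phi$ pulls back a Thom form of $N'$ to $\pm$ a Thom form of $N$. Since $d\tau:N\to N'$ is anti-linear of complex rank $c_W$, the sign is $(-1)^{c_W}$. The point needing care is to apply this only to the normal bundle. The orientation reversal of $W$ and of $F_{gh}$ individually (each by $(-1)^{\dim_\C}$) cancels in the fibre-integration formula, because pushforward depends only on the relative orientation, that is, on $o(N)$. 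I expect this relative-orientation bookkeeping, done with appendix conventions on how fibre integration interacts with form degree, to be the main obstacle. The Thom-class formulation avoids degree-dependent permutation signs, since $c_W$ and $r_W$ are complex ranks and hence the relevant real ranks are even.

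Multiplying the two signs yields $(-1)^{r_W+c_W}$ and proves \eqref{eq:U-sign} as an identity of cohomology classes. By \eqref{eq:age-rank} this sign equals $(-1)^{\age(g)+\age(h)-\age(gh)}$, and this is exactly what the phase $e^{\pi i\age}$ in \eqref{eq:R-def} will cancel in the subsequent theorem.
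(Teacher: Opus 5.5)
Your proposal is correct and follows the paper's own argument. Pullbacks and wedges are natural, the anti-linear isomorphism $d\tau$ of obstruction bundles contributes $(-1)^{r_W}$ to the Euler class, and the anti-linear identification of the complex normal bundles of $W\subset F_{gh}$ and $\tau W\subset\tau F_{gh}$ contributes $(-1)^{c_W}$ to the Thom class defining $m_*$.

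There is one minor notational slip. Since $\phi=\tau^{-1}$ maps $\tau W\to W$, the bundle identity should read $E_{q(g),q(h)}|_{\tau W}\cong\phi^*\overline{E_{g,h}}$. Equivalently, $e(E_{q(g),q(h)})=(-1)^{r_W}\phi^*e(E_{g,h})$. This does not affect the sign.
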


\begin{proof}
The Real map induced by $\tau$ on a complex vector space of dimension $r_W$ reverses the complex orientation by $(-1)^{r_W}$.  Hence conjugating the obstruction Euler class contributes $(-1)^{r_W}$.  The Gysin map $m_*$ is defined by a Thom class of the complex normal bundle of $W$ in $F_{gh}$; anti-complex conjugation reverses its orientation by $(-1)^{c_W}$.  The remaining pullbacks and wedge products are natural.  Multiplying the two signs gives \eqref{eq:U-sign}.
\end{proof}

\begin{theorem}[Closed-sector Real de Rham theorem]\label{thm:closed-real}
The cochain involution $\RR$ induces an anti-linear involutive algebra automorphism of the Chen--Ruan cohomology algebra:
\begin{equation}\label{eq:R-multiplicative}
 \RR([\alpha]\star[\beta])=\RR[\alpha]\star\RR[\beta].
\end{equation}
The fixed cochain complex $\Aplus_{\R}(X,G,\tau)=\Fix(\RR)$ has an induced real cohomology algebra, and
\[
 H^*(\Aplus_{\R})\otimes_{\R}\C
 \cong H^*_{\CR}([X/G];\C)
\]
as complex algebras. This does not assert a strict algebra structure on the raw fixed differential forms.
\end{theorem}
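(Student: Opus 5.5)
The plan is to reduce multiplicativity to a single $W$-contribution and then let the age phases cancel the orientation sign of Lemma~\ref{lem:anti-complex-sign} via the bookkeeping identity \eqref{eq:age-rank}. By Lemma~\ref{lem:R-involution}, $\RR$ is an anti-linear cochain involution, so it descends to an anti-linear involution of $H^*_{\CR}([X/G];\C)$. It carries $G$-invariant collections to $G$-invariant collections, because $\tau$ normalizes $G$. The product on the invariant part is the restriction of the unreduced group-labelled product. It therefore suffices to prove, for each connected component $W\subset X^{g,h}$ and classes $\alpha_g$ on $F_g$ and $\beta_h$ on $F_h$, the identity
\[
 \RR(\alpha_g\star_W\beta_h)=\RR\alpha_g\star_{\tau W}\RR\beta_h
\]
in cohomology. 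We also need that $W\mapsto \tau W$ is a bijection from components of $X^{g,h}$ to components of $X^{q(g),q(h)}$.

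\textbf{Label bookkeeping.} There is an ordering subtlety: $q(g)q(h)=\tau g^{-1}h^{-1}\tau^{-1}=q(hg)$, and this equals $q(gh)$ because $G$ is abelian. So the output sector of $\RR\alpha_g\star\RR\beta_h$ is $q(gh)$, as required. I would also check that $\tau$ maps $X^{g,h}$ to $X^{q(g),q(h)}$ and carries the maps $e_1,e_2,m$ to the corresponding maps for $\tau W$. Further, $d\tau$ should map the obstruction bundle $E_{g,h}$ anti-linearly onto $E_{q(g),q(h)}$, and the normal bundle of $W$ in $F_{gh}$ onto that of $\tau W$ in $F_{q(gh)}$. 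For the line-by-line weights this follows from the computation in Lemma~\ref{lem:age-preservation}: the weights $(a_j,b_j)$ are preserved, so the carry/equality trichotomy of the weight bookkeeping is preserved direction by direction. This ensures that $r_W$ and $c_W$ coincide for $W$ and $\tau W$.

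\textbf{Phase cancellation.} Apply Lemma~\ref{lem:anti-complex-sign} and insert the phases. The left side acquires $e^{\pi i\age(gh,F_{gh})}$, and the right side acquires $e^{\pi i(\age(g,F_g)+\age(h,F_h))}$, using Lemma~\ref{lem:age-preservation} for the ages of the image sectors. By \eqref{eq:age-rank} the quotient of these two phases is $e^{\pi i(r_W+c_W)}=(-1)^{r_W+c_W}$, which exactly cancels the sign in \eqref{eq:U-sign}. Summing over $W$ gives \eqref{eq:R-multiplicative}. The unit lies in the untwisted sector with age $0$, where $\RR$ is conjugated pullback by $\tau$, so $\RR$ fixes $1$.

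\textbf{Real form.} Since $\RR$ is an anti-linear involution commuting with $d$, the decomposition $\alpha=\tfrac12(\alpha+\RR\alpha)+i\cdot\tfrac{1}{2i}(\alpha-\RR\alpha)$ gives $\Aplus=\Fix(\RR)\oplus i\,\Fix(\RR)$ as real complexes. Hence $H^*(\Fix\RR)\otimes_\R\C\cong H^*(\Aplus)$, and $H^*(\Fix\RR)$ is identified with the $\RR$-fixed cohomology. That fixed subspace is closed under $\star$ by multiplicativity, which gives the complex algebra isomorphism.

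The main obstacle is the sign lemma input. One must make sure that the orientation reversal in the pushforward $m_*$ and in the Euler class is exactly $(-1)^{c_W}$ and $(-1)^{r_W}$, with no extra sign from the conjugation $\overline{\,\cdot\,}$ commuting with the pushforward. Pushforward is real, so conjugation commutes with it, but I would verify this with Thom forms chosen $\tau$-equivariantly, using $\tau$-compatible tubular neighbourhoods.
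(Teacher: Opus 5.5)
Your proposal is correct and follows the paper's proof. You cancel the sign $(-1)^{r_W+c_W}$ of Lemma~\ref{lem:anti-complex-sign} against the input phases using \eqref{eq:age-rank}, and you use the decomposition $v=\tfrac12(v+\RR v)+i\cdot\tfrac{1}{2i}(v-\RR v)$ to identify the complexified fixed cohomology with $H^*_{\CR}([X/G];\C)$. The paper leaves several of your checks implicit, and all of them are valid: that $q(g)q(h)=q(hg)=q(gh)$ requires $G$ abelian, that $W\mapsto\tau W$ matches interaction components and preserves $r_W$ and $c_W$, and that $\RR(1)=1$.
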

\begin{proof}
For a connected $W$-contribution, Lemma~\ref{lem:anti-complex-sign} gives the phase on the left as
\[
 e^{\pi i\age(gh)}(-1)^{r_W+c_W}
 =e^{\pi i(\age(g)+\age(h))}
\]
by \eqref{eq:age-rank}. This is the product of the two input phases. Thus the induced map is multiplicative. Lemma~\ref{lem:R-involution} proves involutivity at the cochain level. Every complex cochain has the unique real-form decomposition
\[
 v=\frac{v+\RR v}{2}
   +i\frac{v-\RR v}{2i},
\]
and both summands after removing the factor $i$ are fixed by $\RR$. This decomposition commutes with the differential. Complexification therefore identifies the fixed cohomology with the full $G$-invariant Chen--Ruan cohomology, with its product.
\end{proof}

\begin{remark}\label{rem:closed-only}
Theorem~\ref{thm:closed-real} concerns the ordinary complex twisted sectors, equipped with the extra reflection symmetry induced by the orientifold.  It does not yet introduce odd fixed-locus states. Also, $\Fix(\RR)$ is a real form obtained using label inversion and complex conjugation; it is not the same operation as taking ordinary conjugation invariants under $\widehat G$. Those two constructions must not be identified without a separate comparison.  The next section explains the additional relative data needed for genuine odd interactions.
\end{remark}

\section{Odd sectors: a no-go theorem and the relative replacement}\label{sec:odd}

The normal geometry of an odd involution is not described by a complex character.  This elementary point is easy to miss because an anti-linear involution of a complex line can be written with a phase.  The phase is a coordinate choice, not an invariant.

\subsection{Anti-linear involutions of a complex line}

For $\theta\in\R/\Z$ set
\begin{equation}\label{eq:sigma-theta}
 \sigma_\theta(z)=e^{2\pi i\theta}\overline z.
\end{equation}
Every $\sigma_\theta$ is an anti-linear involution.

\begin{lemma}[Unitary conjugacy]\label{lem:unitary-conjugacy}
All anti-linear involutions \eqref{eq:sigma-theta} are unitarily conjugate.
\end{lemma}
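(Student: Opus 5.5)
The plan is to exhibit an explicit unitary conjugation. For $\theta\in\R/\Z$, I will take the rotation $u_\phi(z)=e^{2\pi i\phi}z$ and compute
\[
 u_\phi\,\sigma_0\,u_\phi^{-1}(z)
 =e^{2\pi i\phi}\,\overline{e^{-2\pi i\phi}z}
 =e^{4\pi i\phi}\,\overline z .
\]
Choosing $\phi=\theta/2$ therefore gives $u_{\theta/2}\sigma_0u_{\theta/2}^{-1}=\sigma_\theta$. For a general pair one composes, using $\sigma_{\theta'}=u_{(\theta'-\theta)/2}\,\sigma_\theta\,u_{(\theta'-\theta)/2}^{-1}$.

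One point needs care. The half-angle $\theta/2$ is defined only modulo $\tfrac12$ once $\theta$ is taken in $\R/\Z$. Any lift of $\theta$ to $\R$ works, and the two choices differ by $u_{1/2}=-1$. This is central and conjugates trivially, so the ambiguity is harmless. Unitarity is immediate because $|e^{2\pi i\phi}|=1$.

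I will also record that each $\sigma_\theta$ is an anti-linear involution: $\sigma_\theta^2(z)=e^{2\pi i\theta}\overline{e^{2\pi i\theta}\overline z}=z$.

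For completeness I will note the slightly stronger statement behind the lemma. Any anti-linear involution $\sigma$ of $\C$ is determined by $\sigma(1)=c$, and the involution condition forces $|c|=1$. So every anti-linear involution of $\C$ is some $\sigma_\theta$, and all of them are unitarily conjugate. This is what the no-go theorem (Theorem~\ref{thm:no-odd-age-intro}) will use: any conjugacy-invariant function of an anti-linear involution must be constant.

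There is no real obstacle here. The only subtle step is the bookkeeping of the half-angle ambiguity, handled above.
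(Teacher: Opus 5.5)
Your argument is correct and is essentially the paper's: the paper conjugates $\sigma_\theta$ to $\sigma_0$ by the half-angle rotation $U_\theta(z)=e^{-\pi i\theta}z$. This map is your $u_{-\theta/2}=u_{\theta/2}^{-1}$, so the two proofs are the same computation read in opposite directions. Your additional remarks are correct but not needed for the lemma as stated: the $\tfrac12$-ambiguity of the half-angle is harmless, and every anti-linear involution of $\C$ is some $\sigma_\theta$, since $\sigma(z)=\sigma(1)\overline z$ with $|\sigma(1)|=1$.
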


\begin{proof}
Let $U_\theta(z)=e^{-\pi i\theta}z$.  Then
\[
 U_\theta\sigma_\theta U_\theta^{-1}(z)
 =U_\theta\bigl(e^{\pi i\theta}\overline z\bigr)
 =\overline z.
\]
\end{proof}

Thus a number extracted from $\theta$ alone cannot be invariant under complex-linear change of coordinates.  Relative phases, by contrast, are meaningful.  For two reflections,
\begin{equation}\label{eq:reflection-product}
 \sigma_0\sigma_\theta(z)=e^{-2\pi i\theta}z,
\end{equation}
which is an ordinary rotation.  If $0<\theta<1$, its complex age is $1-\theta$.

\subsection{No canonical odd age}

The ordinary age enters the Chen--Ruan grading through an integer defect: for even sectors, the difference of ages equals an obstruction rank plus a complex codimension.  One might ask for an analogous function on odd involutions.  The following proposition rules this out already in complex dimension one.

\begin{theorem}[No canonical odd age]\label{thm:no-odd-age}
Let $a$ be a real-valued function on anti-linear involutions of a complex line satisfying:
\begin{enumerate}[label=\textup{(\roman*)}]
\item $a$ is invariant under complex-linear conjugacy;
\item for every pair of anti-linear involutions $\sigma,\rho$, the quantity
\begin{equation}\label{eq:odd-defect}
 a(\sigma)+a(\rho)-\age(\sigma\rho)
\end{equation}
is integer-valued whenever $\sigma\rho\neq 1$.
\end{enumerate}
Then no such function exists.
\end{theorem}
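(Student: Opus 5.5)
By Lemma~\ref{lem:unitary-conjugacy}, every anti-linear involution of the form \eqref{eq:sigma-theta} is complex-linearly conjugate to $\sigma_0(z)=\overline z$. Hypothesis~(i) therefore forces $a$ to take a single value on the whole family $\{\sigma_\theta\}$:
\[
 a(\sigma_\theta)=c \qquad\text{for all }\theta\in\R/\Z.
\]
If one wants all anti-linear involutions of $\C$, not only the unitary ones, one first checks that every anti-linear involution $\sigma(z)=\lambda\overline z$ has $|\lambda|=1$. Indeed, $\sigma^2(z)=\lambda\overline\lambda z$, so $\sigma^2=1$ forces $|\lambda|=1$. Hence every anti-linear involution is some $\sigma_\theta$. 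The first step is thus to reduce $a$ to a constant $c$.

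Next I apply hypothesis~(ii) to the pairs $(\sigma_0,\sigma_\theta)$ with $0<\theta<1$. By \eqref{eq:reflection-product}, $\sigma_0\sigma_\theta$ is multiplication by $e^{-2\pi i\theta}\neq 1$, so the product is not the identity and its complex age is $1-\theta$. Condition \eqref{eq:odd-defect} then says
\[
 2c-(1-\theta)\in\Z \qquad\text{for every }\theta\in(0,1).
\]
The left-hand side is a continuous, non-constant (affine) function of $\theta$ on a connected interval. It cannot take values only in the discrete set $\Z$. For a fully elementary argument, compare $\theta=1/3$ and $\theta=1/2$: the difference of the two defects is $1/6\notin\Z$, which is a contradiction. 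So no such $a$ exists.

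The last sentence of the statement follows at once. A coordinate-independent odd age extracted from one involution would be exactly such a conjugacy-invariant $a$.

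The main point needing care is the bookkeeping of the age convention for the product. Using $\age(e^{-2\pi i\theta})=1-\theta\in(0,1)$ for $\theta\in(0,1)$, as recorded after \eqref{eq:reflection-product}, the defect is non-constant in $\theta$. Any other normalization, such as the fractional part, would change the formula only by integers or a sign, so the argument is robust. Apart from that, the proof uses only Lemma~\ref{lem:unitary-conjugacy} and \eqref{eq:reflection-product}.
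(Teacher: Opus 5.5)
Your proof is correct and follows the paper's argument. Lemma~\ref{lem:unitary-conjugacy} forces $a\equiv c$, and the pairs $(\sigma_0,\sigma_\theta)$, whose products have age $1-\theta$, make $2c-(1-\theta)$ a non-constant function of $\theta$, which cannot be integer-valued. Your two additions, the check that every anti-linear involution of $\C$ is some $\sigma_\theta$ and the explicit comparison of $\theta=1/3$ with $\theta=1/2$, just make explicit what the paper leaves implicit.
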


\begin{proof}
By Lemma~\ref{lem:unitary-conjugacy}, condition (i) implies that $a(\sigma_\theta)=c$ is constant.  For $0<\theta<1$, \eqref{eq:reflection-product} has age $1-\theta$.  Condition (ii) would therefore require
\[
 2c-(1-\theta)\in\Z
\]
for every $\theta\in(0,1)$.  The left-hand side varies continuously and nontrivially with $\theta$, which is impossible for an integer-valued function.
\end{proof}

\begin{corollary}\label{cor:no-positive-part}
A formula such as $\sum_j\max(\theta_j,0)$, formed from phases in local expressions $z_j\mapsto e^{2\pi i\theta_j}\overline z_j$, is not a coordinate-invariant age of a single odd sector.
\end{corollary}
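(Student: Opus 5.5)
The statement to prove is Corollary~\ref{cor:no-positive-part}. I would derive it directly from Lemma~\ref{lem:unitary-conjugacy} and Theorem~\ref{thm:no-odd-age}, treating the proposed formula as a candidate function $a$ and showing that it already fails condition~(i).

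\textbf{Step 1: reduce to one line.} Restrict to a single complex line, or to one normal line of an odd sector written in a split form $z\mapsto(e^{2\pi i\theta_j}\overline z_j)_j$. There the proposed quantity is $a(\sigma_\theta)=\max(\theta,0)$, where $\theta$ is the phase in a chosen local expression $\sigma_\theta(z)=e^{2\pi i\theta}\overline z$.

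\textbf{Step 2: show the quantity depends on coordinates.} Lemma~\ref{lem:unitary-conjugacy} shows that the unitary change of coordinates $U_\theta$ carries $\sigma_\theta$ to $\sigma_0$. The same involution therefore admits local expressions with phase $\theta$ and with phase $0$. Take a representative $\theta\in(0,1)$. The formula then returns $\theta$ in one coordinate system and $0$ in the other, so it is not well defined on the sector.

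\textbf{Step 3: rule out repairing the formula.} One might try to fix this by taking a canonical value, such as a minimum over coordinate choices. Any such repair produces a conjugacy-invariant function. By the proof of Theorem~\ref{thm:no-odd-age}, a conjugacy-invariant function is constant on reflections of a line. A constant $c$ cannot satisfy the integer-defect rule, because $2c-(1-\theta)$ is not integral for all $\theta\in(0,1)$. So the formula either fails to be invariant, or, once made invariant, fails to serve as an age.

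\textbf{Higher rank.} For several lines, apply the same unitary rescaling to one coordinate $z_j$ and leave the others fixed. This changes only the $j$th summand, and shows non-invariance already summand by summand.

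\textbf{Expected obstacle.} The only subtle point is the ambiguity of $\theta\in\R/\Z$ before $\max(\theta,0)$ is taken. Every choice of real representative still depends on the choice of coordinates, so this ambiguity does not affect the argument; I would note it in a sentence.
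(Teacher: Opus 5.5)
Your proposal is correct and follows the paper's own reasoning. The paper gives no separate proof of this corollary: it combines Lemma~\ref{lem:unitary-conjugacy} (the phase is removable by a unitary change of coordinates, so a number read off from $\theta$ is not invariant) with Theorem~\ref{thm:no-odd-age} (any invariant replacement is constant and violates the integer-defect rule), which are exactly your Steps~2 and~3.

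One small correction concerns your closing remark. It is not true that every choice of real representative makes the formula coordinate-dependent. For the convention $\theta\in(-1,0]$ the formula is identically zero, hence invariant. That case is precisely what your Step~3 handles, so the dichotomy still completes the argument.
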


This observation changes the architecture of the orientifold problem.  Odd sectors can certainly carry ordinary or local-coefficient de Rham cohomology, but their multiplication cannot be graded by a fractional invariant assigned independently to each anti-linear fixed locus.

\subsection{Relative angle and pairwise grading data}

The invariant object attached to a pair of odd involutions is their relative position.  In the local model \eqref{eq:sigma-theta}, the fixed real lines are
\[
 L_0=\R,
 \qquad
 L_\theta=e^{\pi i\theta}\R.
\]
Their relative angle determines the rotation \eqref{eq:reflection-product}.  After a grading convention is fixed, the same relative angle also determines an integer Maslov index for the ordered pair.  In higher dimension, transverse Real fixed loci may be analyzed blockwise, so the pairwise grading contribution is additive over the corresponding two-dimensional blocks.

For the de Rham theory developed below, we use only this structural conclusion: the correction attached to an odd interaction is pairwise rather than a sum of invariants assigned separately to the two odd sectors.  Analytically, such pairwise data may be produced by Real Cauchy--Riemann/Fredholm theory and its determinant lines. For orientations of disc moduli under anti-symplectic involutions and the required relative spin data, see \cite{FOOOAnti}; for compatible Fredholm determinant line conventions, see \cite{ZingerDet}; and for Real Gromov--Witten orientations, see \cite{GeorgievaZinger}.  The analytic construction and its $KO$-valued sewing identity are treated separately and are not used in the finite-dimensional Calabi--Yau surface realization proved here.

From the analytic point of view, this relative integer is the local model for the Fredholm index of a Cauchy--Riemann operator with Lagrangian boundary conditions; the standard analytic framework may be found, for example, in \cite{McDuffSalamon}.  What is important for the present paper is that this index is attached to a pair of boundary conditions rather than to either Real sector separately.

\subsection{Odd de Rham spaces with local coefficients}

Let $\sigma$ be an odd involution and $L_\sigma=X^\sigma$.  A minimal de Rham state space on this locus is
\begin{equation}\label{eq:odd-derham}
 \Omega^*(L_\sigma;\mathcal L_\sigma),
\end{equation}
where $\mathcal L_\sigma$ is a rank-one real local system encoding the orientation data required by the intended Gysin maps.  There is no choice satisfying the universal single-sector defect rule for \eqref{eq:odd-derham}.  An integer shift may be supplied by a relative grading package, and this extra datum will be included explicitly in Definition~\ref{def:admissible-package}.

\begin{remark}
The conclusion is intentionally weaker than a universal no-go statement for orientifold cohomology.  Theorem~\ref{thm:no-odd-age} rules out one natural strategy: assigning a coordinate-invariant Chen--Ruan-style fractional age to each odd involution separately.  It does not rule out generalized cohomology, bivariant theories, $KR$-theory, or constructions whose grading is genuinely relative.
\end{remark}

\subsection{The global phase obstruction}\label{subsec:phase-obstruction}

Suppose now that $X$ is Calabi--Yau with nowhere-vanishing holomorphic volume form $\Omega$, and assume the graded action preserves the complex line spanned by $\Omega$.  Thus there are phases $a_\gamma\in\R/\Z$ such that
\[
 g^*\Omega=e^{2\pi i a_g}\Omega\quad(g\in G),
 \qquad
 \sigma^*\Omega=e^{2\pi i a_\sigma}\overline\Omega
 \quad(\epsilon(\sigma)=1).
\]
Let $\Z_\epsilon$, $\R_\epsilon$ and $(\R/\Z)_\epsilon$ denote the sign modules, with action
\[
 \gamma\cdot t=(-1)^{\epsilon(\gamma)}t.
\]
The phase law is
\begin{equation}\label{eq:phase-law}
 a_{\gamma\delta}
 \equiv a_\gamma+(-1)^{\epsilon(\gamma)}a_\delta
 \pmod{\Z},
\end{equation}
so $a$ is a $1$-cocycle with values in $(\R/\Z)_\epsilon$.

Choose real lifts $\widetilde a_\gamma$ and define
\begin{equation}\label{eq:maslov-cocycle}
 \mu_{\R}(\gamma,\delta)
 =\widetilde a_{\gamma\delta}-\widetilde a_\gamma
 -(-1)^{\epsilon(\gamma)}\widetilde a_\delta
 \in\Z.
\end{equation}
The phase law implies that $\mu_\R$ is a twisted $2$-cocycle.

Twisted group cohomology also appears in equivariant Jandl structures \cite{GawedzkiSuszekWaldorfWZW} and in orientifold discrete torsion, understood through equivariant $B$-field data \cite{SharpeDiscrete}. Here the additive notation $U(1)_\epsilon=\R/\Z$ uses the sign action. The volume phase is a class in $H^1(\widehat G;U(1)_\epsilon)$, and its Bockstein lies in $H^2(\widehat G;\Z_\epsilon)$. Orientifold discrete torsion instead has classes in $H^2(\widehat G;U(1)_\epsilon)$. The coefficient action is related, but the degree and geometric datum differ. The volume-line obstruction below is not identified with a gerbe, $B$-field, or Jandl obstruction.

\begin{proposition}\label{prop:phase-cocycle}
The cohomology class
\[
 [\mu_{\R}]\in H^2(\widehat G;\Z_\epsilon)
\]
is independent of the chosen real lifts.
\end{proposition}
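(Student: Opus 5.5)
The plan is the standard verification that changing the lift of a cochain changes its coboundary by a coboundary. First I will check that $\mu_{\R}$ is a twisted $2$-cocycle for the sign module $\Z_\epsilon$, a fact the paper asserts from the phase law. I will write $\delta_\epsilon$ for the inhomogeneous twisted coboundary, with
\[
 (\delta_\epsilon f)(\gamma,\delta)=(-1)^{\epsilon(\gamma)}f(\delta)-f(\gamma\delta)+f(\gamma)
\]
on $1$-cochains. Then \eqref{eq:maslov-cocycle} reads $\mu_{\R}=-\delta_\epsilon\widetilde a$, where $\widetilde a$ is regarded as a $1$-cochain with values in $\R_\epsilon$. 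Since $\delta_\epsilon\circ\delta_\epsilon=0$ on $C^*(\widehat G;\R_\epsilon)$, we get $\delta_\epsilon\mu_{\R}=0$ in real cochains. Because $\mu_{\R}$ takes integer values by \eqref{eq:phase-law}, it is a cocycle in $C^2(\widehat G;\Z_\epsilon)$. This uses only that $\epsilon$ is a homomorphism, so that $\R_\epsilon$ is a genuine $\widehat G$-module containing $\Z_\epsilon$ as a submodule.

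Next, suppose $\widetilde a'$ is another choice of real lifts. Then $n_\gamma=\widetilde a'_\gamma-\widetilde a_\gamma\in\Z$ for every $\gamma$, so $n$ is a $1$-cochain with values in $\Z_\epsilon$. By linearity of $\delta_\epsilon$,
\[
 \mu'_{\R}-\mu_{\R}=-\delta_\epsilon n .
\]
Explicitly,
\[
 (\mu'_{\R}-\mu_{\R})(\gamma,\delta)=n_{\gamma\delta}-n_\gamma-(-1)^{\epsilon(\gamma)}n_\delta ,
\]
which is exactly $-(\delta_\epsilon n)(\gamma,\delta)$ for a cochain $n\in C^1(\widehat G;\Z_\epsilon)$. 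Hence $\mu'_{\R}$ and $\mu_{\R}$ differ by an integral twisted coboundary, and their classes in $H^2(\widehat G;\Z_\epsilon)$ coincide.

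I will also note that the phases $a_\gamma$ themselves are determined by $\Omega$, so the lifts are the only choice involved. If one further allows rescaling $\Omega$ by a unit $e^{2\pi i c}$, then $a$ changes by the coboundary $\gamma\mapsto(-1)^{\epsilon(\gamma)}c-c$. This can be lifted to the real coboundary of $c$, which leaves $\mu_{\R}$ unchanged; this remark anticipates Theorem~\ref{thm:phase-obstruction-intro}. There is no serious obstacle. The only point requiring care is to use the same sign convention for $\delta_\epsilon$ throughout, so that the difference is recognized as a coboundary in the $\Z_\epsilon$-twisted complex rather than in the untwisted one. The factor $(-1)^{\epsilon(\gamma)}$ must act on the second argument, consistent with the action $\gamma\cdot t=(-1)^{\epsilon(\gamma)}t$.
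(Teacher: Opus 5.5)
Your argument is correct and matches the paper's one-line proof: two lifts differ by an integer $1$-cochain $n$, so $\mu_{\R}$ changes by the integral twisted coboundary $-\delta_\epsilon n$. Your preliminary check that $\mu_{\R}=-\delta_\epsilon\widetilde a$ is a cocycle matches the computation in the proof of Theorem~\ref{thm:phase-obstruction}. There is one harmless sign slip in your closing remark: rescaling $\Omega$ by $e^{2\pi i c}$ adds $2c$ to each odd phase, so $a$ changes by $\gamma\mapsto c-(-1)^{\epsilon(\gamma)}c$, the negative of what you wrote; this is still a real coboundary, so your conclusion stands.
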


\begin{proof}
Changing the lifts by an integer-valued $1$-cochain changes $\mu_\R$ by a twisted coboundary.
\end{proof}

The class $[\mu_\R]$ is not a single-sector odd age.  It measures the failure of the entire phase system to admit a coherent real lift.  This admits a precise cohomological formulation.

\begin{theorem}[Phase-lift obstruction]\label{thm:phase-obstruction}
With the standard inhomogeneous group-cohomology convention,
\begin{equation}\label{eq:bockstein-phase}
 \boxed{[\mu_{\R}]=-\beta([a])}
 \in H^2(\widehat G;\Z_\epsilon),
\end{equation}
where
\[
 \beta:H^1\bigl(\widehat G;(\R/\Z)_\epsilon\bigr)
 \longrightarrow H^2(\widehat G;\Z_\epsilon)
\]
is the connecting homomorphism associated with
\[
 0\longrightarrow\Z_\epsilon\longrightarrow\R_\epsilon
 \longrightarrow(\R/\Z)_\epsilon\longrightarrow0.
\]
The following conditions are equivalent:
\begin{enumerate}[label=\textup{(\roman*)}]
\item $[\mu_\R]=0$;
\item there are real lifts $\widehat a_\gamma$ satisfying
\begin{equation}\label{eq:coherent-phase-lift}
 \widehat a_{\gamma\delta}
 =\widehat a_\gamma+(-1)^{\epsilon(\gamma)}\widehat a_\delta;
\end{equation}
\item after multiplying $\Omega$ by a constant unit complex number, one can arrange
\begin{equation}\label{eq:phase-trivial-normalization}
 g^*\Omega=\Omega\quad(g\in G),
 \qquad
 \sigma^*\Omega=\overline\Omega\quad(\epsilon(\sigma)=1).
\end{equation}
\end{enumerate}
\end{theorem}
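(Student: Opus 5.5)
The plan is to compute the connecting homomorphism directly on inhomogeneous cochains and then read off the equivalences from the cohomology long exact sequence.

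\textbf{Bockstein identity.} The twisted coboundary of a $1$-cochain $b$ with values in a sign module is
\[
 (\delta b)(\gamma,\delta)=(-1)^{\epsilon(\gamma)}b_\delta-b_{\gamma\delta}+b_\gamma .
\]
To compute $\beta([a])$, I lift the cocycle $a$ to the real cochain $\widetilde a$ and apply $\delta$. Since $a$ is a cocycle by \eqref{eq:phase-law}, the result takes values in $\Z_\epsilon$, and by definition it represents $\beta([a])$. Comparing with \eqref{eq:maslov-cocycle} gives $\delta\widetilde a=-\mu_\R$, which yields \eqref{eq:bockstein-phase}. The only thing to watch is the sign convention for $\delta$, and that is exactly where the minus sign comes from. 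Independence of lifts is already Proposition~\ref{prop:phase-cocycle}.

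\textbf{(i)$\Leftrightarrow$(ii).} If coherent lifts $\widehat a$ exist, then the cocycle $\mu_\R$ built from $\widehat a$ vanishes identically. Hence $[\mu_\R]=0$ by lift-independence.

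Conversely, suppose $\mu_\R=\delta n$ for some integer $1$-cochain $n$, taken with the sign convention above. Set $\widehat a=\widetilde a\pm n$, with the sign chosen so that the defect cancels. This is still a lift of $a$, and its defect is $\mu_\R\mp\delta n=0$, which is \eqref{eq:coherent-phase-lift}.

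\textbf{(ii)$\Leftrightarrow$(iii).} A coherent lift is exactly a twisted $1$-cocycle $\widehat a\in Z^1(\widehat G;\R_\epsilon)$. The key input is that $H^1(\widehat G;\R_\epsilon)=0$, because $\widehat G$ is finite and $\R$ is uniquely divisible. Concretely, averaging gives the primitive
\[
 c=\frac{1}{|\widehat G|}\sum_{\delta}\widehat a_\delta ,
\]
and then $\widehat a_\gamma=c-(-1)^{\epsilon(\gamma)}c$. I will verify this directly by summing the cocycle identity over $\delta$.

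Now rescale $\Omega$ to $\Omega'=e^{2\pi i s}\Omega$. For even $g$ this gives
\[
 g^*\Omega'=e^{2\pi i a_g}\Omega' .
\]
For odd $\sigma$ it gives
\[
 \sigma^*\Omega'=e^{2\pi i(a_\sigma+2s)}\,\overline{\Omega'} .
\]
So rescaling changes $a$ by the twisted coboundary of the constant $s$: the even phases are unchanged and the odd phases shift by $2s$.

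(ii)$\Rightarrow$(iii): with $\widehat a_\gamma=c-(-1)^{\epsilon(\gamma)}c$, the even phases are $0$ and the odd phases are $2c$. Choosing $s=-c$ makes every phase vanish, which is \eqref{eq:phase-trivial-normalization}.

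(iii)$\Rightarrow$(ii): if the normalization holds for $\Omega'$, the zero lift is coherent for the phases of $\Omega'$. Adding back the real coboundary of the constant $s$ gives a real coherent lift for the original $a$.

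\textbf{Main obstacle.} The hardest step is keeping the sign and coboundary conventions consistent, and checking that the phases are defined modulo $\Z$ with $e^{2\pi i a}$ normalized as stated. The averaging argument for the vanishing of $H^1(\widehat G;\R_\epsilon)$ is routine. Finally, the torsor statements follow from two facts: two coherent lifts of $a$ differ by an element of $Z^1(\widehat G;\Z_\epsilon)$, and changing a lift by an integer coboundary preserves the equivalence class.
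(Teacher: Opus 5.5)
Your proposal is correct and follows the paper's proof essentially step by step: the direct cochain computation $\delta\widetilde a=-\mu_\R$ for the Bockstein identity, adjustment of the lift by an integer cochain for (i)$\Leftrightarrow$(ii), and the averaging $c=\frac{1}{|\widehat G|}\sum_\gamma\widehat a_\gamma$ followed by rephasing $\Omega$ by $e^{-2\pi i c}$ for (ii)$\Rightarrow$(iii). Your explicit argument for (iii)$\Rightarrow$(ii), which adds back the coboundary of the constant $s$, simply spells out what the paper calls immediate.
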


\begin{proof}
Let $\delta$ denote the inhomogeneous group-cohomology differential.  For the chosen real lift,
\[
 (\delta\widetilde a)(\gamma,\delta)
 =(-1)^{\epsilon(\gamma)}\widetilde a_\delta
 -\widetilde a_{\gamma\delta}+\widetilde a_\gamma
 =-\mu_\R(\gamma,\delta).
\]
This proves \eqref{eq:bockstein-phase}.  Thus $[\mu_\R]=0$ precisely when the lifts can be modified by an integer-valued cochain so that the defect vanishes, giving \eqref{eq:coherent-phase-lift}.

Assume a coherent lift exists.  Since $\widehat G$ is finite and $\R_\epsilon$ is a real vector-space module,
\[
 H^1(\widehat G;\R_\epsilon)=0;
\]
see, for example, \cite{Brown}.  An elementary averaging argument is useful here.  Put
\[
 S=\sum_{\gamma\in\widehat G}\widehat a_\gamma.
\]
For $\eta\in\widehat G$, summing
\[
 \widehat a_{\eta\gamma}
 =\widehat a_\eta+(-1)^{\epsilon(\eta)}\widehat a_\gamma
\]
over $\gamma$ gives
\[
 S=|\widehat G|\widehat a_\eta+(-1)^{\epsilon(\eta)}S.
\]
If $c=S/|\widehat G|$, then
\[
 \widehat a_\eta=c-(-1)^{\epsilon(\eta)}c.
\]
Hence $\widehat a_g=0$ for even $g$ and $\widehat a_\sigma=2c$ for odd $\sigma$.  Replacing $\Omega$ by $e^{-2\pi ic}\Omega$ leaves the even phases unchanged and subtracts $2c$ from every odd phase, which gives \eqref{eq:phase-trivial-normalization}.  The converse is immediate.
\end{proof}

\begin{proposition}[Classification of coherent lifts]\label{prop:phase-lift-classification}
Assume $[\mu_\R]=0$.  The set of coherent real lifts of the fixed phase cocycle is a torsor for
\[
 Z^1(\widehat G;\Z_\epsilon).
\]
Modulo integral twisted coboundaries, the set of equivalence classes is a torsor for
\[
 H^1(\widehat G;\Z_\epsilon).
\]
\end{proposition}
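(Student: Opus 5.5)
The plan is to show directly that the set of coherent lifts is an affine space over integral twisted cocycles. Fix the phase cocycle $a$ with values in $(\R/\Z)_\epsilon$. Since $[\mu_\R]=0$, Theorem~\ref{thm:phase-obstruction} supplies at least one coherent lift $\widehat a$, so the set is nonempty.

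\textbf{The action.} For $z\in Z^1(\widehat G;\Z_\epsilon)$, i.e.\ $z_{\gamma\delta}=z_\gamma+(-1)^{\epsilon(\gamma)}z_\delta$ with $z_\gamma\in\Z$, I define $z\cdot\widehat a=\widehat a+z$. Because \eqref{eq:coherent-phase-lift} is linear, $\widehat a+z$ again satisfies it. It still reduces to $a$ modulo $\Z$, so it is a coherent lift. This is clearly an action of the additive group.

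\textbf{Free and transitive.} Freeness is immediate, since $\widehat a+z=\widehat a$ forces $z=0$. For transitivity, let $\widehat a,\widehat a'$ be two coherent lifts of the same $a$. Their difference $z=\widehat a'-\widehat a$ takes values in $\Z$, because both reduce to $a_\gamma$ modulo $\Z$. Subtracting the two instances of \eqref{eq:coherent-phase-lift} shows that $z$ satisfies the twisted cocycle identity. Hence $z\in Z^1(\widehat G;\Z_\epsilon)$, and $\widehat a'=z\cdot\widehat a$.

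\textbf{Passing to classes.} I declare $\widehat a\sim\widehat a'$ when $\widehat a'-\widehat a=\delta n$ for an integer $0$-cochain $n\in\Z_\epsilon$. Here $(\delta n)_\gamma=(-1)^{\epsilon(\gamma)}n-n$, which is the natural notion of equivalence modulo integral twisted coboundaries. The subgroup $B^1(\widehat G;\Z_\epsilon)$ of such coboundaries acts through the free transitive action above. The quotient of a $Z^1$-torsor by a subgroup $B^1$ is a torsor for $Z^1/B^1=H^1(\widehat G;\Z_\epsilon)$.

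\textbf{Main point to check.} The only real point is that this equivalence coincides with the intended one. That is, changing the lift by $\delta n$ must be the integral shadow of the normalization change in Theorem~\ref{thm:phase-obstruction}(iii). Rescaling $\Omega$ by $e^{2\pi i c}$ shifts the lift by $\delta c$ with $c$ real. For such a shift to keep the lift coherent and congruent to $a$, one needs $\delta c$ to be integral, that is, $2c\in\Z$.

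One could worry that real constant shifts produce a coarser equivalence than $B^1$. I will note that the statement explicitly specifies equivalence modulo \emph{integral} twisted coboundaries, so the torsor claim is exactly the $Z^1/B^1$ quotient. The real rescalings then merely induce a further quotient, which I will not need.
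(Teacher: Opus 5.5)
Your proof is correct and follows the paper's argument. Differences of coherent lifts are integer-valued twisted $1$-cocycles, adding such a cocycle preserves coherence, and quotienting by integral coboundaries $\delta n$ yields the $H^1(\widehat G;\Z_\epsilon)$-torsor; you also rightly make explicit that nonemptiness comes from Theorem~\ref{thm:phase-obstruction}, a point the paper leaves implicit, while your closing remark on real rescalings is not needed for the statement as formulated.
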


\begin{proof}
The difference of two coherent lifts is an integer-valued twisted $1$-cocycle, and adding such a cocycle to one lift produces another.  Quotienting by changes coming from integer-valued $0$-cochains gives the second statement.
\end{proof}

\begin{corollary}[Phase-trivial criterion]\label{cor:phase-trivial-criterion}
For a finite orientifold action preserving a Calabi--Yau volume line, the condition $[\mu_\R]=0$ is equivalent to the existence, after constant rephasing of the volume form, of the normalization \eqref{eq:phase-trivial-normalization}.
\end{corollary}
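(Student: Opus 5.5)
The corollary is a restatement of the equivalence (i)$\Leftrightarrow$(iii) in Theorem~\ref{thm:phase-obstruction}, so the plan is to derive it from that theorem. Nothing new should be needed beyond checking that the corollary's hypotheses match the theorem's. The only hypothesis is that the finite graded action preserves the complex line spanned by $\Omega$. This produces the phases $a_\gamma\in\R/\Z$, the twisted $1$-cocycle law \eqref{eq:phase-law}, and the class $[\mu_\R]$, which is well defined by Proposition~\ref{prop:phase-cocycle}.

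For the direction $[\mu_\R]=0\Rightarrow$ normalization, I would follow the proof of Theorem~\ref{thm:phase-obstruction}. First use \eqref{eq:bockstein-phase} to modify the real lifts by an integral cochain so that the coherent identity \eqref{eq:coherent-phase-lift} holds. Then apply the averaging argument with $S=\sum_\gamma\widehat a_\gamma$ and $c=S/|\widehat G|$. This gives $\widehat a_g=0$ for even $g$ and $\widehat a_\sigma=2c$ for odd $\sigma$. Rescaling to $\Omega'=e^{-2\pi i c}\Omega$ leaves the even phases at zero, and since $\sigma^*\overline{\Omega'}$-type terms pick up the conjugate factor, it subtracts $2c$ from each odd phase. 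The result is \eqref{eq:phase-trivial-normalization}.

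For the converse, suppose $\Omega'=e^{2\pi i s}\Omega$ satisfies the normalization. Substituting back gives $a_g\equiv0$ and $a_\sigma\equiv 2s \pmod{\Z}$. The explicit lift $\widehat a_\gamma=s-(-1)^{\epsilon(\gamma)}s$ then satisfies \eqref{eq:coherent-phase-lift} identically in $\R$, so $\mu_\R$ vanishes for this lift and hence $[\mu_\R]=0$.

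The one point that needs care, and which I regard as the main step, is that the class $[\mu_\R]$ is intrinsic to the action and the line $\C\Omega$, not to the particular $\Omega$. Rephasing $\Omega$ by $e^{2\pi i s}$ changes $a$ by the twisted coboundary of the $0$-cochain $s$, and this does not change $\beta([a])$. So the condition ``after constant rephasing'' is compatible with testing $[\mu_\R]=0$ for any representative $\Omega$.
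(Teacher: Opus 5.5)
Your proposal is correct and follows the paper's route: the corollary is exactly the equivalence (i)$\Leftrightarrow$(iii) of Theorem~\ref{thm:phase-obstruction}. Like the paper, you modify the real lifts by an integral cochain and then average to normalize, and your rephasing-invariance remark is a harmless addition. One small sign slip in the converse: if $\Omega'=e^{2\pi i s}\Omega$ satisfies the normalization, then $a_\sigma\equiv-2s$ rather than $2s$, so the coherent lift should be $\widehat a_\gamma=-s+(-1)^{\epsilon(\gamma)}s$ (alternatively, take the zero lift for $\Omega'$ and apply your invariance observation); the conclusion is unaffected.
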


\begin{example}[A nontrivial obstruction]\label{ex:square-phase-obstruction}
Let $E_i=\C/(\Z+i\Z)$, $X=E_i\times E_i$ and $\Omega=dz_1\wedge dz_2$.  Define
\[
 r(z_1,z_2)=(iz_1,z_2),
 \qquad
 \tau(z_1,z_2)=(\overline z_1,\overline z_2).
\]
Then $\tau r\tau=r^{-1}$ and $r^*\Omega=i\Omega$.  Restricting the phase cocycle to $\langle r\rangle\cong C_4$ gives the character $r\mapsto1/4\in\R/\Z$.  Its Bockstein is a generator of
\[
 H^2(C_4;\Z)\cong\Z/4,
\]
so $[\mu_\R]\neq0$.  Thus this action cannot be made phase-trivial by rephasing $\Omega$.
\end{example}

Theorem~\ref{thm:phase-obstruction} is a criterion for normalization of the volume line, logically separate from the single-sector no-go theorem. In fact, for a finite graded group it is equivalent to triviality of the even volume character. Necessity is immediate from \eqref{eq:phase-trivial-normalization}. Conversely, if all even phases are zero, the cocycle law for a product of two odd elements gives $a_\sigma=a_\rho$ in $\R/\Z$. A single constant rephasing eliminates this common phase. This does not supply Euler class sewing in arbitrary dimension. The effective surface case is treated separately in Section~\ref{subsec:CY2-realization}.

\section{Admissible orientifold de Rham packages}\label{sec:admissible}

The previous section rules out determining the universal reflection-pair defect by individual conjugacy-invariant sector ages.  We now isolate sufficient finite-dimensional geometric data for a cohomological realization.  The definition is deliberately formulated in clean-intersection language and does not assume an analytic Fredholm realization.

\subsection{Sector data}
For every label $\gamma\in\widehat G$ specify a smooth sector $Z_\gamma$, a rank-one real local system $\mathcal L_\gamma$, and a locally constant degree shift $d_\gamma$. Even sectors are $Z_g=X^g$, with the componentwise rational shift
\begin{equation}\label{eq:even-shift}
 d_g|_F=2\age(g,F).
\end{equation}
For odd involutions we take $Z_\sigma=X^\sigma$ and require the chosen shifts $d_\sigma$ to be integers. These are additional data. If the graded group has noninvolutive odd elements, their sector data must also be supplied; the half-dimensional fixed-locus description is not asserted for them. All labels needed by multiplication must be included. In the surface application every odd element is an involution.

For an ordered interacting pair, specify a smooth correspondence
\begin{equation}\label{eq:pair-correspondence}
 Z_\gamma\xleftarrow{e_1}W_{\gamma,\delta}
 \xrightarrow{e_2}Z_\delta,\qquad
 m_{\gamma,\delta}:W_{\gamma,\delta}\longrightarrow Z_{\gamma\delta},
\end{equation}
where $m_{\gamma,\delta}$ is a proper embedding on each component. Write $c_{\gamma,\delta}$ for its real codimension. Let $K_{\gamma,\delta}\to W_{\gamma,\delta}$ be an actual real bundle of rank $k_{\gamma,\delta}$. On even pairs it is the underlying real Chen--Ruan obstruction bundle.

\paragraph{\textbf{Parity convention.}}
Throughout the formal realization theorem in this section, all correction ranks, Gysin embedding codimensions, and excess ranks in the composition diagrams are even. This is an explicit restriction, covering the purely complex and Calabi--Yau surface cases. We do not claim here an arbitrary odd-rank extension obtained just by suppressing its signs. The ordinary form degree, not the rational or shifted sector degree, controls wedge signs.

\begin{definition}[Euler-admissible orientifold de Rham package]\label{def:admissible-package}
An Euler-admissible package consists of the preceding data and parity convention, together with the following structures.
\begin{enumerate}[label=\textup{(A\arabic*)}]
\item \textbf{Degree defect.} On each connected interaction component, with shifts restricted to the relevant sector components,
\begin{equation}\label{eq:degree-defect}
 d_\gamma+d_\delta-d_{\gamma\delta}=k_{\gamma,\delta}+c_{\gamma,\delta}.
\end{equation}
\item \textbf{Coefficient-line compatibility.} Specify an isomorphism
\begin{equation}\label{eq:line-compatibility}
 e_1^*\mathcal L_\gamma\otimes e_2^*\mathcal L_\delta\otimes o(K_{\gamma,\delta})
 \cong m_{\gamma,\delta}^*\mathcal L_{\gamma\delta}\otimes o(N_{m_{\gamma,\delta}}).
\end{equation}
All identifications are ordered; $o(\cdot)$ denotes the orientation local system.
\item \textbf{Euler representatives.} Choose a closed form
$\mathbf e_{\gamma,\delta}\in\Omega^{k_{\gamma,\delta}}(W_{\gamma,\delta};o(K_{\gamma,\delta}))$
representing the actual twisted Euler class. Direct sums use the written order and the Whitney formula.
\item \textbf{Clean composition and Euler sewing.} For each triple, the two iterated correspondence fiber products are identified componentwise with a common smooth $W$, with the same three evaluation maps and the same proper final Gysin embedding $f:W\to Z_{\gamma_1\gamma_2\gamma_3}$. The base-change squares used in their compositions are clean, with actual excess bundles $E_L,E_R$ and their oriented clean base-change maps. Pull all correction bundles back to $W$ and put
\[
 B_L=K_{12}\oplus K_{12,3}\oplus E_L,\qquad
 B_R=K_{23}\oplus K_{1,23}\oplus E_R.
\]
They have the same rank $r$. Require an identification $\Phi:o(B_L)\to o(B_R)$ satisfying
\begin{equation}\label{eq:euler-sewing}
 \Phi_*e(B_L)=e(B_R)\quad\text{in }H^r(W;o(B_R)).
\end{equation}
The composed coefficient identifications from (A2) and from the oriented base-change squares agree under $\Phi$, and give the same coefficient system and orientation for $f_*$. These are requirements on the actual correspondence diagrams, not just on their virtual bundle classes.
\end{enumerate}
\end{definition}

An actual oriented isomorphism $B_L\cong B_R$, inducing the specified line and final Gysin identifications, is a sufficient stronger condition for \eqref{eq:euler-sewing}. A stable isomorphism is not sufficient. Equality of the Euler classes is the condition used below; the definition does not assume a canonical odd age or an analytic gluing theorem.

\subsection{The de Rham complex and product}
First form the unreduced complex
\[
 \widetilde{\AOri}^{\,q}=\bigoplus_{\gamma\in\widehat G}
 \Omega^{q-d_\gamma}(Z_\gamma;\mathcal L_\gamma),\qquad q\in\mathbb Q,
\]
where a summand is zero unless its ordinary form degree is a nonnegative integer. For conjugation-equivariant data, as specified in Section~\ref{subsec:descent}, set
\begin{equation}\label{eq:AOri}
 \AOri^q=(\widetilde{\AOri}^{\,q})^{\widehat G}
 \cong\bigoplus_{[\gamma]}
 \Omega^{q-d_\gamma}(Z_\gamma;\mathcal L_\gamma)^{C_{\widehat G}(\gamma)}.
\end{equation}
The exterior derivative is the unsuspended differential of ordinary local-coefficient forms and raises $q$ by one.

On unreduced homogeneous components, use chosen Thom representatives for
\begin{equation}\label{eq:admissible-product}
 \alpha_\gamma\star\beta_\delta
 =(m_{\gamma,\delta})_*
 \bigl(e_1^*\alpha_\gamma\wedge e_2^*\beta_\delta\wedge\mathbf e_{\gamma,\delta}\bigr),
\end{equation}
with the sum over interaction components understood. Here a tubular representative has the order $\pi^*\eta\wedge\Phi_N$, followed by extension by zero. The line isomorphism \eqref{eq:line-compatibility} is exactly the coefficient type required for this Gysin map. Products of collections use convolution,
\[
 (a\star b)_\eta=\sum_{\gamma\delta=\eta}a_\gamma\star b_\delta.
\]
The algebra asserted below is on cohomology, where the auxiliary differential-form choices disappear.

\begin{proposition}[Degree and de Rham descent]\label{prop:degree-leibniz}
The product \eqref{eq:admissible-product} has degree zero with respect to the shifted grading \eqref{eq:AOri}.  If $\alpha_\gamma$ has ordinary de Rham degree $p$, then on unsuspended differential forms
\begin{equation}\label{eq:ordinary-leibniz}
 d(\alpha_\gamma\star\beta_\delta)
 =d\alpha_\gamma\star\beta_\delta
 +(-1)^p\alpha_\gamma\star d\beta_\delta.
\end{equation}
Consequently the product of closed forms is closed and the product descends canonically to the shifted de Rham cohomology groups.
\end{proposition}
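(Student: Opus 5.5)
The plan has three steps: check the degree count directly from (A1), derive the Leibniz rule \eqref{eq:ordinary-leibniz} from the fact that every operation in \eqref{eq:admissible-product} is a chain map up to a sign, and then deduce descent to cohomology. Throughout, $\alpha_\gamma\in\Omega^{p}(Z_\gamma;\mathcal L_\gamma)$ has shifted degree $p+d_\gamma$, and $\beta_\delta\in\Omega^{p'}(Z_\delta;\mathcal L_\delta)$ has shifted degree $p'+d_\delta$.

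\emph{Degree.} The wedge $e_1^*\alpha_\gamma\wedge e_2^*\beta_\delta\wedge\mathbf e_{\gamma,\delta}$ has ordinary degree $p+p'+k_{\gamma,\delta}$. The Gysin map $(m_{\gamma,\delta})_*$ raises degree by the codimension $c_{\gamma,\delta}$. The output therefore lies in $\Omega^{p+p'+k+c}(Z_{\gamma\delta};\mathcal L_{\gamma\delta})$, with shifted degree $p+p'+k+c+d_{\gamma\delta}$. By \eqref{eq:degree-defect} this equals $p+d_\gamma+p'+d_\delta$, and the check is made componentwise on each interaction component. The coefficient type is correct by \eqref{eq:line-compatibility}, since the Thom form of $N_m$ absorbs $o(N_m)$ and $\mathbf e$ carries $o(K)$.

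\emph{Leibniz.} I argue in three steps.
\begin{enumerate}[label=\textup{(\arabic*)}]
\item Pullbacks $e_i^*$ commute with $d$ on local-coefficient forms, because the flat structure of the pulled-back local systems is preserved.
\item The wedge product satisfies the graded Leibniz rule with signs governed by ordinary form degree. Since $d\mathbf e_{\gamma,\delta}=0$ by (A3), we get
\[
d\bigl(e_1^*\alpha\wedge e_2^*\beta\wedge\mathbf e\bigr)=e_1^*d\alpha\wedge e_2^*\beta\wedge\mathbf e+(-1)^p e_1^*\alpha\wedge e_2^*d\beta\wedge\mathbf e.
\]
\item The pushforward must commute with $d$. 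With the stated convention, $m_*\eta$ is $\pi^*\eta\wedge\Phi_N$ on a tubular neighborhood, extended by zero. Because $d\Phi_N=0$ and $\Phi_N$ is supported in a compact subset of the fibre directions, we get $d(\pi^*\eta\wedge\Phi_N)=\pi^*d\eta\wedge\Phi_N$ with no sign, since $\Phi_N$ sits on the right. Extension by zero commutes with $d$ because the form vanishes near the boundary of the tube. Properness of $m$ ensures the extended form is smooth on $Z_{\gamma\delta}$.
\end{enumerate}
Combining these three steps gives \eqref{eq:ordinary-leibniz}. The even-rank parity convention is not needed here, because the Thom form is placed last. I expect the main obstacle to be exactly this placement: one must check that the chosen order $\pi^*\eta\wedge\Phi_N$ makes $m_*$ an unsigned chain map even when $N$ is nonorientable, with twisted coefficients. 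This is handled by taking $\Phi_N$ as a closed form with values in $o(N)$ and compact vertical support, and verifying the identity locally on trivializing opens.

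\emph{Descent.} If $\alpha$ and $\beta$ are closed, \eqref{eq:ordinary-leibniz} shows the product is closed. If $\alpha=d\xi$ with $\beta$ closed, then $\alpha\star\beta=d(\xi\star\beta)$. The case $\beta$ exact is handled the same way using the sign $(-1)^p$. Hence the product is well defined on cohomology.

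Canonicity means independence of the auxiliary choices. Two choices of Euler representative differ by $d\lambda$, and so change the product by $\pm d\,m_*(e_1^*\alpha\wedge e_2^*\beta\wedge\lambda)$ for closed inputs. Two Thom forms or tubular neighborhoods give cohomologous compactly supported representatives of the same Thom class, since tubular neighborhoods are isotopic and the Thom class is unique. Linearity over the convolution sum finishes the argument. For the reduced complex, $\widehat G$-equivariance of the data makes the convolution preserve invariants.
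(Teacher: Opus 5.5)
Your proposal is correct and follows the paper's proof. It uses the same degree count from (A1), the Leibniz rule from closedness of $\mathbf e_{\gamma,\delta}$ together with the chain-map property of pullback and Thom pushforward, and descent by exact changes of inputs; your canonicity paragraph matches Proposition~\ref{prop:euler-choice}. Your observation is also right that placing $\Phi_N$ and $\mathbf e_{\gamma,\delta}$ on the right makes the pushforward an unsigned chain map without the even-parity convention, so the appendix's appeal to that convention is not needed at this step.
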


\begin{proof}
If the unshifted form degrees are $p$ and $q$, then the form under the pushforward has degree $p+q+k_{\gamma,\delta}$, and the Gysin map increases real degree by $c_{\gamma,\delta}$.  The output shifted degree is therefore
\[
 p+q+k_{\gamma,\delta}+c_{\gamma,\delta}+d_{\gamma\delta}
 =(p+d_\gamma)+(q+d_\delta)
\]
by \eqref{eq:degree-defect}.  Since the Euler representatives are closed, the ordinary de Rham Leibniz rule and the standard compatibility of Gysin pushforward with $d$ give \eqref{eq:ordinary-leibniz}.  Exact changes of either input therefore change the product by an exact form.
\end{proof}

\begin{remark}[Shifted grading and chain-level scope]\label{rem:suspension-sign}
The shifted degree need not have the same parity as the underlying form degree; it can even be rational. Formula~\eqref{eq:ordinary-leibniz} is an unsuspended formula for the chosen Thom representatives. Suspension signs alone do not supply coherent higher homotopies or a strict associative chain-level product. No such strictification is claimed here. The result is an associative algebra on shifted cohomology with its explicitly stated ordinary-form sign conventions.
\end{remark}

\subsection{Associativity from excess intersection}

The use of an Euler class on a double correspondence, and an excess comparison on a triple correspondence, is an established orbifold-product mechanism. A direct antecedent is the criterion recalled in \cite[Section~5]{LUXVirtual}, following Fantechi--G\"ottsche, in which the two correction products are compared after multiplication by their clean-excess Euler classes. For later de Rham and inertial formulations see \cite{KaufmannDeRham,EJKLog,EJKPlethora}. The hypotheses here record actual real bundles, local coefficients and a nonstable Euler comparison; the pull--push argument is not a new associativity principle. The virtual and rotated Chen--Ruan products use different correction classes, as made explicit in Section~\ref{subsec:virtual-comparison}.

We use the standard clean excess-intersection principle: composing two Gysin maps through two different clean fiber-product bracketings differs by multiplication with the Euler class of the corresponding excess bundle.  The appendix records the determinant-line convention used here.

\begin{theorem}[Associative realization]\label{thm:associative-realization}
An Euler-admissible package in Definition~\ref{def:admissible-package}, including its even-rank parity convention, defines an associative degree-zero product on unreduced cohomology. For conjugation-equivariant data it restricts to an associative product on $H^*(\AOri)$.
\end{theorem}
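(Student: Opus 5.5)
The plan is the standard pull--push argument on cohomology. By Proposition~\ref{prop:degree-leibniz} the product descends to cohomology and has degree zero, so only associativity needs proof. Fix homogeneous closed classes $\alpha_1,\alpha_2,\alpha_3$ with labels $\gamma_1,\gamma_2,\gamma_3$. I will rewrite $(\alpha_1\star\alpha_2)\star\alpha_3$ and $\alpha_1\star(\alpha_2\star\alpha_3)$ as a single Gysin pushforward along the common embedding $f:W\to Z_{\gamma_1\gamma_2\gamma_3}$ from (A4). Both sides are sums over interaction components, so it is enough to argue one component of the common $W$ at a time.

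\textbf{Reducing the left bracketing.} Expanding $(\alpha_1\star\alpha_2)\star\alpha_3$ gives $(m_{12,3})_*\bigl(e_1^*(m_{12})_*(\cdots)\wedge e_2^*\alpha_3\wedge\mathbf e_{12,3}\bigr)$. Three routine steps follow.
\begin{enumerate}[label=\textup{(\arabic*)}]
\item Apply the projection formula to move $e_2^*\alpha_3\wedge\mathbf e_{12,3}$ inside the pushforward. Since all ranks and codimensions are even, no Koszul signs arise here.
\item Apply the clean base-change formula, recorded in Appendix~\ref{app:det}, to the square formed by $m_{12}$ and the evaluation map of $W_{12,3}$. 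This gives $e_1^*(m_{12})_*=(\text{pushforward})\circ(\text{pullback}\wedge e(E_L))$ on cohomology, with the oriented clean base-change maps of (A4).
\item Use functoriality of Gysin maps for composed embeddings, with orientation lines composed through (A2).
\end{enumerate}
The result is
\[
 f_*\bigl(\pi_1^*\alpha_1\wedge\pi_2^*\alpha_2\wedge\pi_3^*\alpha_3\wedge e(K_{12})\wedge e(K_{12,3})\wedge e(E_L)\bigr),
\]
where the Euler factors combine by the Whitney formula into $e(B_L)$ in the written order. Closed representatives are only needed up to exact forms, so the chosen $\mathbf e$ may be replaced by the classes $e(\cdot)$ throughout.

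\textbf{Reducing the right bracketing and comparing.} The same procedure applied to the right bracketing yields $f_*(\cdots\wedge e(B_R))$. Moving $\pi_1^*\alpha_1$ past even-degree Euler forms costs no sign. The sewing hypothesis \eqref{eq:euler-sewing}, $\Phi_*e(B_L)=e(B_R)$, together with the requirement in (A4) that the composed coefficient identifications agree under $\Phi$ and give the same coefficient system and orientation for $f_*$, shows that the two integrands define the same class in the same local-coefficient group. Pushing forward by the same $f_*$ then gives equality. Summing over components and over label factorizations in the convolution gives associativity on unreduced cohomology.

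\textbf{Descent to invariants.} For conjugation-equivariant data, $\widehat G$ acts on the correspondence data compatibly with the product. The product of invariant collections is therefore invariant, and the invariant subspace is a subalgebra, hence associative.

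\textbf{Main obstacle.} I expect the hardest step to be the orientation and coefficient bookkeeping in the base-change step (2): showing that the excess-intersection formula with local coefficients produces exactly $e(E_L)$ with the orientation of $o(E_L)$ induced by the oriented base-change square, and that this is compatible with the line isomorphisms \eqref{eq:line-compatibility}. I would handle it by working locally with Thom forms $\pi^*\eta\wedge\Phi_N$ in tubular neighborhoods and using the determinant-line convention of the appendix, checking the identity $N_{m}|\cong N_{\mathrm{base\,change}}\oplus E$ at the level of orientation lines. The even-rank parity convention is what makes every reordering sign trivial in this check.
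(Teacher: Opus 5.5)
Your proposal is correct and matches the paper's argument. Clean base change inserts $e(E_L)$ and $e(E_R)$. The projection formula and Gysin functoriality reduce both bracketings to $f_*$ of the integrands \eqref{eq:left-integrand} and \eqref{eq:right-integrand}. Whitney multiplicativity under the even-parity convention gives $e(B_L)$ and $e(B_R)$. Then \eqref{eq:euler-sewing}, with the coefficient clause of (A4), identifies the two, and one sums over components and passes to conjugation invariants.

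There is one ordering slip in your steps (1) and (2). The factor $e_2^*\alpha_3\wedge\mathbf e_{12,3}$ lives on $W_{12,3}$, not on $Z_{\gamma_1\gamma_2}$. So the projection formula can move it inside a pushforward only after base change has rewritten $e_1^*(m_{12})_*$ as a pushforward from $W$ to $W_{12,3}$. You should therefore apply base change first and the projection formula second, which is the order the paper uses.
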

\begin{proof}
Fix closed representatives $\alpha,\beta,\gamma$ and a common triple component $W$. Apply the oriented clean base-change formula to the left composition, then the projection formula. Its cohomology class is the final pushforward of
\begin{equation}\label{eq:left-integrand}
 e_1^*\alpha\wedge e_2^*\beta\wedge e_3^*\gamma
 \wedge e(K_{12})\wedge e(K_{12,3})\wedge e(E_L).
\end{equation}
The right composition gives the final pushforward of
\begin{equation}\label{eq:right-integrand}
 e_1^*\alpha\wedge e_2^*\beta\wedge e_3^*\gamma
 \wedge e(K_{23})\wedge e(K_{1,23})\wedge e(E_R).
\end{equation}
Here the same symbol denotes a representative or its cohomology class as appropriate. All correction degrees and Gysin codimensions are even, so moving these factors to the indicated positions creates no sign. By Whitney multiplicativity their respective correction factors are $e(B_L)$ and $e(B_R)$. Hypothesis~\eqref{eq:euler-sewing} identifies these classes, and the last clause of (A4) identifies the coefficient systems and final pushforwards. Hence the two compositions agree. Summing over components and triples proves associativity on the group-labelled sum. Conjugation equivariance then gives the invariant statement.
\end{proof}

\begin{remark}[Stable data do not determine an Euler class]\label{rem:KO-not-enough}
The implication from an oriented stable isomorphism to equality of ordinary Euler classes is false. For example,
\[
 TS^2\oplus\underline\R\cong\underline\R^2\oplus\underline\R,
 \qquad e(TS^2)=2u\ne0=e(\underline\R^2).
\]
Both sides are stably isomorphic even as oriented bundles; choosing Euler forms separately cannot change these classes. Multiplication by the Euler class of the stabilizing trivial line gives only $0=0$. Thus actual Euler/Thom representatives and stable sewing do not replace \eqref{eq:euler-sewing}. An actual compatible isomorphism of $B_L$ and $B_R$ does suffice. A generalized cohomology Euler class would define a different construction requiring its own hypotheses.
\end{remark}

\paragraph{\textbf{Comparison with complex inertial bundles.}}
In the complex setting of \cite[Proposition~3.1.4]{EJKPlethora}, the relevant $K$-class identity determines the total Chern classes of the two equal-rank total bundles, hence their top Chern classes. This does not justify cancelling a stabilizing \emph{real} bundle in the ordinary Euler class. Our explicit hypothesis \eqref{eq:euler-sewing} concerns precisely that distinction.

\subsection{Basic sources of admissible data}

Definition~\ref{def:admissible-package} is designed to separate existence questions from the formal de Rham argument.  We record several situations that motivate its clauses.

\paragraph{Purely even sectors.}
For the ordinary Chen--Ruan theory of a finite abelian global quotient, take $Z_g=X^g$, $d_g=2\age(g)$ and $\mathcal L_g=\underline\R$.  Let $K_{g,h}$ be the underlying real bundle of the complex obstruction bundle.  The complex orientations provide the coefficient-line data, and the standard obstruction/excess identity supplies sewing.  Thus the Chen--Hu model is an admissible package.

\paragraph{\textbf{Transverse reflection pairs.}}
Suppose two odd fixed loci $L_\sigma$ and $L_\rho$ meet transversely and $\sigma\rho$ is even.  Then $W_{\sigma,\rho}=L_\sigma\cap L_\rho$ maps naturally to $X^{\sigma\rho}$.  Even when the geometric obstruction bundle is zero, the Gysin map requires the orientation line $o(N_m)$ and the grading defect depends on the real codimension.  If a relative grading and compatible orientations of the pair are chosen, this gives the simplest nontrivial odd--odd piece of an admissible package.  Changing the relative grading can change the degree assignment even though neither odd fixed locus changes as a submanifold.

\paragraph{\textbf{Possible analytic sources.}}
Families of Real Cauchy--Riemann operators provide one possible analytic source of relative gradings and orientation lines.  When such analytic data admit finite-dimensional Euler/Thom representatives compatible with clean composition, they can feed into Definition~\ref{def:admissible-package}.  The construction of the corresponding real family-index sewing identity is a separate analytic problem and is not part of the proof of Theorem~\ref{thm:associative-realization} or of the Calabi--Yau surface realization below.

\paragraph{\textbf{Coefficients in characteristic two.}}
One may formulate a singular cohomology analogue over $\Ftwo$, where orientation signs and orientation local systems disappear. This is not a de Rham theory over $\Ftwo$. Degree compatibility and the appropriate Euler/excess comparison remain necessary; changing coefficients does not by itself solve the geometric sewing problem. Returning to integral or real coefficients requires the orientation-line coherence omitted by the mod-two theory.

These examples also show why the admissibility assumptions should not be compressed into the phrase ``the Real obstruction bundle is oriented.''  The coefficient line, degree defect and four-point sewing are separate pieces of structure.

\subsection{Descent, units and independence of auxiliary forms}\label{subsec:descent}

The unreduced formulas above are written with group elements as labels.  For a quotient stack one takes the direct sum over conjugacy classes and the invariants of the corresponding centralizers.  We require the data of an admissible package to be equivariant under conjugation: if $k\in\widehat G$, the package for $(\gamma,\delta)$ is carried isomorphically to the package for $(k\gamma k^{-1},k\delta k^{-1})$.  Under this hypothesis convolution preserves the invariant subspace in \eqref{eq:AOri}. The identification with the centralizer sum uses orbit sums of equivariantly transported classes. This convention fixes all multiplicities; no unspecified centralizer scalar is inserted into the multiplication.

The identity sector is required to be normalized by
\[
 Z_1=X,\qquad d_1=0,\qquad \mathcal L_1=\underline\R,
\]
and by taking $W_{1,\gamma}=W_{\gamma,1}=Z_\gamma$, $K_{1,\gamma}=K_{\gamma,1}=0$ with the evident line identifications.  Then the constant function $1\in H^0(X;\R)$ is a two-sided unit.

\begin{proposition}[Independence of Euler representatives]\label{prop:euler-choice}
The product induced on cohomology is independent of the choice of closed Euler forms $\mathbf e_{\gamma,\delta}$ representing the fixed Euler classes of the bundles $K_{\gamma,\delta}$.
\end{proposition}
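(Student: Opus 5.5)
The plan is to show that changing an Euler representative changes the raw product \eqref{eq:admissible-product} only by an exact form whenever the inputs are closed. Since the product on cohomology is defined through closed representatives (Proposition~\ref{prop:degree-leibniz}), this suffices.

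First, fix an interacting pair $(\gamma,\delta)$ and two closed forms $\mathbf e,\mathbf e'\in\Omega^{k_{\gamma,\delta}}(W_{\gamma,\delta};o(K_{\gamma,\delta}))$ representing the same twisted Euler class. Write $\mathbf e'-\mathbf e=d\eta$ with $\eta$ a twisted $(k_{\gamma,\delta}-1)$-form. For closed $\alpha_\gamma$ of degree $p$ and closed $\beta_\delta$ of degree $q$, set $\xi=e_1^*\alpha_\gamma\wedge e_2^*\beta_\delta$. This form is closed, with coefficients in $e_1^*\mathcal L_\gamma\otimes e_2^*\mathcal L_\delta$. The Leibniz rule for local-coefficient forms then gives
\[
\xi\wedge\mathbf e'-\xi\wedge\mathbf e=(-1)^{p+q}\,d(\xi\wedge\eta),
\]
where the coefficient types multiply exactly as on the left side of \eqref{eq:line-compatibility}.

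Second, apply the Gysin pushforward $(m_{\gamma,\delta})_*$ using the line isomorphism \eqref{eq:line-compatibility}. For the chosen tubular representative $\pi^*(\cdot)\wedge\Phi_N$ with a closed Thom form $\Phi_N$, followed by extension by zero, the pushforward commutes with $d$ up to the fixed sign convention. Here the codimension is even, so no sign appears. This is the same compatibility already used in Proposition~\ref{prop:degree-leibniz}. The difference of the two products is therefore $\pm d\bigl((m_{\gamma,\delta})_*(\xi\wedge\eta)\bigr)$, which is exact in the appropriate local-coefficient complex on $Z_{\gamma\delta}$. Summing over interaction components and over the convolution $\gamma\delta=\eta$ gives the unreduced statement. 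Because the product is bilinear, the Euler forms can be changed one pair at a time.

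Finally, on the invariant subcomplex \eqref{eq:AOri}, the two choices of Euler forms might not be conjugation-equivariant. If the package requires equivariant choices, then averaging $\eta$ over the conjugation action keeps the primitive equivariant, since the action commutes with $d$ and with the transported Gysin maps. If the choices are arbitrary, the unreduced cohomology classes already agree, so their restrictions to invariant classes agree as well.

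The main obstacle is the second step. It requires that the de Rham pushforward along a proper embedding with twisted Thom form intertwines $d$ on local-coefficient forms, and that the sign and coefficient bookkeeping match \eqref{eq:line-compatibility}. I would handle this by reducing to the standard untwisted statement on a double cover that orients $N_m$, or equivalently by working locally where the lines are trivialized and checking that the transition signs commute with $d$. The remaining steps are formal.
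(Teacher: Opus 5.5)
Your proposal is correct and follows the paper's own argument: write $\mathbf e'-\mathbf e=d\eta$, use the Leibniz rule so that for closed inputs the integrand changes by $\pm d(e_1^*\alpha_\gamma\wedge e_2^*\beta_\delta\wedge\eta)$, and conclude because the Thom-form Gysin map commutes with $d$. Your additional sign bookkeeping and the remark on the invariant subcomplex are consistent with this. With the order $\pi^*(\cdot)\wedge\Phi_N$ and a closed $\Phi_N$, commutation with $d$ in fact holds without the even-codimension convention.
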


\begin{proof}
If $\mathbf e'$ is another representative, then $\mathbf e'-\mathbf e=d\xi$ for a twisted form $\xi$.  Replacing $\mathbf e$ by $\mathbf e'$ in \eqref{eq:admissible-product} changes the integrand by an exact form plus terms which vanish for closed inputs.  Since the Gysin map commutes with $d$, the difference of the two products is exact.  Hence the cohomology class is unchanged.
\end{proof}

\begin{proposition}[Unit]\label{prop:unit}
With the normalization above, the identity-sector class $1\in H^0(X;\R)$ is the unit of the cohomology algebra determined by an admissible package.
\end{proposition}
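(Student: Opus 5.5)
The plan is to compute the product $1\star\beta_\gamma$ directly from \eqref{eq:admissible-product} using the normalized identity-sector data, then treat $\beta_\gamma\star1$ symmetrically, and finally pass to invariants and cohomology.

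\textbf{Left unit.} Take $\beta_\gamma\in\Omega^{\bullet}(Z_\gamma;\mathcal L_\gamma)$ closed and form $1\star\beta_\gamma$. The normalization gives $W_{1,\gamma}=Z_\gamma$, so $e_2$ and $m_{1,\gamma}$ are the identity of $Z_\gamma$ and $e_1:Z_\gamma\to X$ is the inclusion. The correction bundle $K_{1,\gamma}=0$ has rank zero, and its Euler representative is the constant $1$. The embedding $m_{1,\gamma}$ has codimension $c_{1,\gamma}=0$, so its normal bundle is zero and the Thom representative is $1$. The Gysin map $m_*$ is therefore the identity on forms. Since $e_1^*1=1$, we get $1\star\beta_\gamma=\beta_\gamma$ at the form level.

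\textbf{Right unit.} The same computation gives $\beta_\gamma\star1=\beta_\gamma$. Because $1$ has form degree zero, no wedge sign arises from its position.

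\textbf{Consistency checks.} Three points need confirming.
\begin{itemize}
\item The degree defect \eqref{eq:degree-defect} holds, since $d_1+d_\gamma-d_\gamma=0=k+c$.
\item The coefficient identification \eqref{eq:line-compatibility} reduces to $\underline\R\otimes\mathcal L_\gamma\otimes\underline\R\cong\mathcal L_\gamma\otimes\underline\R$, and the ``evident'' identification is the canonical one, so $m_*$ acts as the identity on coefficients with no sign.
\item No other interaction components contribute. The pair $(1,\gamma)$ has the single specified correspondence, and in the convolution $(1\star b)_\eta$ only $\delta=\eta$ occurs.
\end{itemize}

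\textbf{Passing to invariants and cohomology.} Convolution with the collection $1$, supported in the identity sector, is then the identity on $\widetilde{\AOri}$. The constant function $1$ is $\widehat G$-invariant, because it is the pullback of a constant and $\mathcal L_1$ is the trivial line with trivial action. So the identity holds on invariants in \eqref{eq:AOri}. By Proposition~\ref{prop:degree-leibniz} the product descends to cohomology, so $[1]$ is a two-sided unit there. Proposition~\ref{prop:euler-choice} shows this does not depend on the choice of Euler forms.

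\textbf{Main obstacle.} The only delicate point is the convention that a codimension-zero Gysin map with the trivial Thom form, and a rank-zero Euler form equal to $1$, together with the chosen identity line isomorphisms, give exactly the identity, with no stray orientation sign. I will argue this from the tubular convention $\pi^*\eta\wedge\Phi_N$ with $N=0$ and $\Phi_N=1$.
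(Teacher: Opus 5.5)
Your proposal is correct and takes essentially the same route as the paper. The paper's proof likewise observes that for $(1,\gamma)$ and $(\gamma,1)$ the correspondence is the identity on $Z_\gamma$ with zero correction bundle and Euler form $1$, so \eqref{eq:admissible-product} reduces to wedging with the constant function $1$ followed by the identity Gysin map; your degree, coefficient-line, convolution-support and invariance checks spell out bookkeeping the paper leaves implicit.
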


\begin{proof}
For $(1,\gamma)$ and $(\gamma,1)$ the correspondence is the identity on $Z_\gamma$, the obstruction bundle is zero and the Euler form is $1$.  Formula \eqref{eq:admissible-product} is therefore the ordinary wedge product with the constant function $1$ followed by the identity Gysin map.
\end{proof}

The product need not be written as a strictly associative multiplication on a particular choice of de Rham representatives.  The theorem asserts associativity on cohomology.  A canonical chain level $A_\infty$ refinement would require coherent choices over higher sewing spaces and is not part of the present construction.

\subsection{Recovery of Chen--Hu}

\begin{corollary}[Even reduction]\label{cor:ChenHu-reduction}
Suppose all labels are even, set $d_g=2\age(g)$, take $\mathcal L_g$ trivial, and let $K_{g,h}$ be the underlying real bundle of the Chen--Ruan obstruction bundle.  Then \eqref{eq:admissible-product} is the Chen--Hu product.
\end{corollary}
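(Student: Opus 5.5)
The plan is to compare the package formula \eqref{eq:admissible-product} with the Chen--Hu formula \eqref{eq:CH-product} term by term, and to check that each piece of data in Definition~\ref{def:admissible-package} reduces to the corresponding Chen--Hu ingredient.

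First, the sector data. With $Z_g=X^g$ and $d_g=2\age(g)$ as in \eqref{eq:even-shift}, and trivial $\mathcal L_g$, the unreduced complex $\widetilde{\AOri}$ is the unreduced version of \eqref{eq:CH-complex}. The $G$-invariants agree with the Chen--Hu invariants by the descent convention of Section~\ref{subsec:descent}.

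Next, for each pair $(g,h)$, I take $W_{g,h}$ to be the disjoint union of the connected components $W\subset X^{g,h}$. The maps $e_1,e_2,m$ are the natural inclusions. Then:
\begin{enumerate}[label=\textup{(\roman*)}]
\item $K_{g,h}=E_{g,h}$ as real bundles, so $k=2r_W$.
\item The codimension is $c=2c_W$.
\item (A1) becomes twice \eqref{eq:age-rank}.
\item All ranks are even, so the parity convention holds.
\end{enumerate}
For (A2), both $o(K)$ and $o(N_m)$ are trivialized by complex orientations. The line isomorphism is therefore the canonical one, and $(m_{g,h})_*$ is the Gysin map for the complex-oriented normal bundle, as in Chen--Hu. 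For (A3), any closed representative of the real Euler class of $E_{g,h}$ with its complex orientation is a representative of $e(E_{g,h})=c_{\mathrm{top}}(E_{g,h})$. By Proposition~\ref{prop:euler-choice}, the induced cohomology product does not depend on this choice. Comparing the integrands, \eqref{eq:admissible-product} becomes literally \eqref{eq:CH-product}, summed over $W$.

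The remaining step, and the one I expect to be the main obstacle, is checking (A4). This is needed only if the corollary is read as asserting that the even data form an Euler-admissible package, so that Theorem~\ref{thm:associative-realization} applies. For triple composition, I would use the clean intersection structure of the abelian global quotient: $X^{g_1,g_2,g_3}$ is the common triple space, and the excess bundles $E_L,E_R$ are the complex excess bundles of the clean squares. The standard Chen--Ruan/Fantechi--G\"ottsche identity gives an isomorphism of actual complex bundles
\[
 E_{12}\oplus E_{12,3}\oplus E_L\cong E_{23}\oplus E_{1,23}\oplus E_R
\]
on $W$. I would verify this line by line using the weight bookkeeping above: on each simultaneous eigenline, both sides count the same number of carries and Gysin directions in the sum $a_j+b_j+c_j$. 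Because the isomorphism is complex and actual, it preserves complex orientations. Hence it yields $\Phi$ and \eqref{eq:euler-sewing}, which avoids the stable-versus-actual issue of Remark~\ref{rem:KO-not-enough}.

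Finally, the unit normalization is the standard identity-sector convention. This identifies the package product, both on forms with the specified Thom representatives and on cohomology, with the Chen--Hu product.
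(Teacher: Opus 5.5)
Your proposal is correct and follows the paper's proof: ranks and codimensions double, so (A1) is twice \eqref{eq:age-rank}; complex orientations trivialize $o(K_{g,h})$ and $o(N_m)$; and the real Euler class of the underlying oriented bundle is the top Chern class, so \eqref{eq:admissible-product} reduces to \eqref{eq:CH-product}. Your (A4) check goes beyond the paper's proof of this corollary, which only invokes the standard obstruction/excess identity in the ``Purely even sectors'' paragraph, and it is sound: if the three weights on a simultaneous eigenbundle sum to $s$, that eigenbundle occurs in both $B_L$ and $B_R$ with multiplicity $[s>1]+[s>2]$, so the two sides are isomorphic as actual complex bundles.
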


\begin{proof}
The real rank of $K_{g,h}$ is twice the complex obstruction rank and the real codimension of $W\subset X^{gh}$ is twice its complex codimension.  Thus \eqref{eq:degree-defect} is exactly twice the age identity \eqref{eq:age-rank}.  Complex orientations trivialize all orientation local systems, and the real Euler class of the underlying oriented real bundle is the ordinary top Chern/Euler class.  The formula reduces to \eqref{eq:CH-product}.
\end{proof}

The preceding reduction is a comparison with the established even-sector theory, not a new construction of that theory. Its formal Thom formulation for general orbifolds is given by Hu--Wang \cite[Theorem~3.8]{HuWang}. Their complex obstruction class identity \cite[Theorem~3.2]{HuWang} does not remove the ordinary real Euler class requirement \eqref{eq:euler-sewing}; in particular it does not justify stable real cancellation.

\subsection{Phase-trivial Calabi--Yau surface realizations}\label{subsec:CY2-realization}

We now identify the proposed surface product with an ordinary Chen--Ruan product after an invariant rotation of almost complex structure.  Let $(X,\omega,J,\Omega)$ be a compact connected K\"ahler surface with nowhere vanishing holomorphic two-form.  Let $G$ be a finite abelian group acting effectively by holomorphic symplectomorphisms and assume
\begin{equation}\label{eq:CY2-even-volume}
 g^*\Omega=\Omega,
 \qquad g\in G.
\end{equation}
Let $\tau$ be an anti-holomorphic anti-symplectic involution satisfying
\begin{equation}\label{eq:CY2-odd-volume}
 \tau^*\Omega=\overline\Omega,
 \qquad
 \tau g\tau=g^{-1}.
\end{equation}
Set $\widehat G=G\rtimes\langle\tau\rangle$.  Since $G$ is abelian, every odd element $g\tau$ is an involution.

\begin{lemma}[Even fixed loci]\label{lem:CY2-even-fixed}
Let $1\neq g\in G$.  Then $X^g$ is empty or zero-dimensional.  At every fixed point,
\[
 \age(g)=1.
\]
\end{lemma}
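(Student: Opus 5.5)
We argue pointwise at a fixed point. Let $1\neq g\in G$ and $x\in X^g$. Since $g$ has finite order and preserves $J$ and $\omega$, the differential $dg_x$ is a unitary transformation of $(T_xX,J_x)\cong\C^2$ for a $g$-invariant Hermitian metric; averaging over $G$ gives such a metric, and the K\"ahler metric itself is already invariant. Hence $dg_x$ diagonalizes with eigenvalues $e^{2\pi i\theta_1},e^{2\pi i\theta_2}$, $\theta_j\in[0,1)$. The condition $g^*\Omega=\Omega$ says that $g$ acts trivially on $\Lambda^{2,0}$, so $\det_\C dg_x=1$, i.e.\ $\theta_1+\theta_2\in\Z$.

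The first step is to exclude a positive-dimensional fixed component. Suppose $\theta_1=0$. Then $\theta_2\in\Z\cap[0,1)$, so $\theta_2=0$ and $dg_x=\mathrm{id}$. We then use the standard linearization fact: for a finite-order isometry of a connected Riemannian manifold, the exponential map at $x$ intertwines $dg_x$ with $g$ near $x$. Consequently $g$ is the identity on a neighborhood of $x$. The set of points where $g$ agrees with the identity to first order is therefore open; it is also closed, so by connectedness $g=\mathrm{id}$ on $X$. This contradicts effectiveness. Hence both $\theta_j\in(0,1)$, the fixed subspace $\ker(dg_x-1)$ is zero, and $x$ is an isolated fixed point. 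Since $X^g$ is a closed submanifold whose tangent space is $\ker(dg_x-1)$ and $X$ is compact, $X^g$ is a finite set, possibly empty.

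The second step computes the age. With $\theta_1,\theta_2\in(0,1)$ and $\theta_1+\theta_2\in\Z$, we have $0<\theta_1+\theta_2<2$, which forces $\theta_1+\theta_2=1$. Since the normal space at an isolated point is all of $T_xX$, this gives $\age(g)=\theta_1+\theta_2=1$.

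The only non-formal ingredient is the rigidity step, passing from $dg_x=\mathrm{id}$ to $g=\mathrm{id}$, which rests on effectiveness together with the linearization of isometries. The rest is the determinant condition coming from $\Omega$. In particular, effectiveness of the whole group is needed only through the individual element $g$.
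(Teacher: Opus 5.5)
Your proof is correct and follows the same route as the paper's: $g^*\Omega=\Omega$ forces $\det_\C dg_x=1$, effectiveness together with local linearization rules out the eigenvalue $1$, and then $\theta_1+\theta_2\in\Z\cap(0,2)$ gives age $1$. The only difference is cosmetic: you linearize through the exponential map of the $G$-invariant K\"ahler metric and write out the open--closed connectedness argument, whereas the paper cites local linearizability of finite-order holomorphic actions.
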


\begin{proof}
At $p\in X^g$, equation \eqref{eq:CY2-even-volume} implies $\det_\C(dg_p)=1$.  If one eigenvalue were $1$, then both would be $1$.  A finite-order holomorphic action is locally linearizable, so $dg_p=1$ would make $g$ the identity on a neighborhood of $p$, hence on connected $X$, a contradiction.  Thus the eigenvalues are
\[
 e^{2\pi i\theta},\qquad e^{2\pi i(1-\theta)},
 \qquad 0<\theta<1,
\]
and the age is $1$.
\end{proof}

\begin{lemma}[Canonical odd orientation and grading]\label{lem:CY2-odd-orientation}
Let $\sigma\in\widehat G\setminus G$ and let $L_\sigma=X^\sigma$.  Every nonempty component of $L_\sigma$ is an oriented Lagrangian surface.  Its squared phase with respect to $\Omega$ is identically one, and hence it has vanishing Maslov class and the canonical zero grading in the sense of \cite{Seidel}.
\end{lemma}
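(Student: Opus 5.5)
Write $\sigma=g\tau$; since $G$ is abelian and $\tau g\tau=g^{-1}$, $\sigma$ is an involution. By the phase-trivial normalization \eqref{eq:CY2-even-volume}--\eqref{eq:CY2-odd-volume} and the cocycle law \eqref{eq:phase-law}, we get $\sigma^*\Omega=\overline\Omega$. Also $\sigma$ is anti-holomorphic and anti-symplectic, because $g$ preserves $J,\omega$ while $\tau$ reverses them. Proposition~\ref{prop:totally-real} then shows that $L_\sigma$ is a Lagrangian submanifold of real dimension $2$. It is smooth because it is the fixed locus of a finite-order isometry for the invariant metric $\omega(\cdot,J\cdot)$.

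\textbf{Phase computation.} Restrict $\Omega$ to $L_\sigma$. Let $\iota:L_\sigma\to X$ be the inclusion, so $\sigma\circ\iota=\iota$. Then
\[
\iota^*\Omega=\iota^*\sigma^*\Omega=\iota^*\overline\Omega=\overline{\iota^*\Omega}.
\]
Hence $\iota^*\Omega$ is a \emph{real} $2$-form on $L_\sigma$, equal to $\iota^*\operatorname{Re}\Omega$. Next, for a Lagrangian plane $\Lambda=\operatorname{span}_\R(v_1,v_2)$, the vectors $v_1,v_2$ form a complex basis of $T_xX$. Therefore $\Omega(v_1,v_2)\neq0$, because $\Omega$ is a nowhere-vanishing holomorphic $(2,0)$-form. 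So $\iota^*\Omega$ is a nowhere-vanishing real top form on each component of $L_\sigma$. This does two things at once:
\begin{itemize}
\item it orients every component of $L_\sigma$;
\item it shows that the phase function $\Omega|_\Lambda/|\Omega|_\Lambda|$ takes values in $\{\pm1\}$.
\end{itemize}

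\textbf{Squared phase, Maslov class and grading.} Seidel's squared phase map $\Lambda\mapsto\Omega(v_1,v_2)^2/|\Omega(v_1,v_2)|^2$ is independent of the basis. By the previous paragraph it is identically $1$ on $TL_\sigma$. The Maslov class of $L_\sigma$ is the pullback under this map of the generator of $H^1(S^1;\Z)$. A constant map pulls it back to zero, so the Maslov class vanishes. A grading in the sense of \cite{Seidel} is a real lift $\theta:L_\sigma\to\R$ with $e^{2\pi i\theta}$ equal to the squared phase. Since the squared phase is $1$, $\theta$ must be locally constant with values in $\Z$, and $\theta\equiv0$ is the canonical choice. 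One could also use $\Omega^{\otimes2}$ with its unit phase; the conclusion is the same.

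\textbf{Main obstacle.} The step needing care is justifying $\Omega|_{T_xL}\neq0$ using only the Lagrangian condition with respect to $\omega$, without special Lagrangian input. The key points are:
\begin{itemize}
\item total reality (Proposition~\ref{prop:totally-real}) gives $T_xX=T_xL\oplus JT_xL$;
\item so a real basis of $T_xL$ is a complex basis of $T_xX$;
\item a nonzero complex-bilinear alternating form on $\C^2$ is nonzero on any complex basis.
\end{itemize}
I would also double-check the sign conventions in the phase law, so that the odd phase of $g\tau$ is indeed zero and not merely constant. Even a nonzero constant odd phase would still give a locally constant squared phase, so the Maslov conclusion is robust to that check.
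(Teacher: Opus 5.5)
Your proposal is correct and follows the paper's proof essentially step for step. You get $\sigma^*\Omega=\overline\Omega$ from the normalization and use it to show that $\Omega$ restricts to a real, nowhere-vanishing form on the Lagrangian fixed surface. Your complex-basis argument makes explicit the paper's remark that a nonzero $(2,0)$-form cannot vanish on a Lagrangian plane in complex dimension two. You then orient by $\operatorname{Re}\Omega|_{L_\sigma}>0$ and read off the constant squared phase, the vanishing Maslov class and the zero grading, as the paper does.

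The sign-convention check you flag is harmless, because $\sigma^*\Omega=\tau^*g^*\Omega=\tau^*\Omega=\overline\Omega$ holds directly.
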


\begin{proof}
The fixed locus of the anti-symplectic involution $\sigma$ is Lagrangian.  Since every odd element has the form $g\tau$, equations \eqref{eq:CY2-even-volume}--\eqref{eq:CY2-odd-volume} give $\sigma^*\Omega=\overline\Omega$.  For $v_1,v_2\in T_xL_\sigma$,
\[
 \Omega(v_1,v_2)=\overline{\Omega(v_1,v_2)}.
\]
On a Lagrangian plane in complex dimension two the restriction of the nonzero $(2,0)$-form is nonzero, hence real of constant sign on each connected component.  We orient the component by requiring $\operatorname{Re}\Omega|_{L_\sigma}>0$.  The resulting phase is zero and its squared phase is constant one.  The final assertion is the standard phase-lift characterization of graded Lagrangians \cite{Seidel}.
\end{proof}

\begin{remark}[Dependence on the normalized volume form]\label{rem:Omega-sign}
The orientation in Lemma~\ref{lem:CY2-odd-orientation} is canonical relative to the chosen phase-trivial volume form $\Omega$.  Multiplying $\Omega$ by a positive real constant changes neither the odd orientations nor the grading.  Replacing $\Omega$ by $-\Omega$ reverses the orientations of all odd fixed surfaces while leaving their squared-phase grading unchanged.  Throughout the surface realization theorem the normalized form $\Omega$ is regarded as part of the Calabi--Yau orientifold data.
\end{remark}

Define sector shifts by
\begin{equation}\label{eq:CY2-shifts}
 d_1=0,
 \qquad
 d_g=2\quad(1\neq g\in G),
 \qquad
 d_\sigma=1\quad(\epsilon(\sigma)=1).
\end{equation}
All coefficient local systems are trivialized by the complex orientation on even sectors and by the $\operatorname{Re}\Omega$ orientation on odd fixed surfaces.

For an interaction pair set
\[
 W_{\gamma,\delta}=Z_\gamma\cap Z_\delta,
 \qquad
 m_{\gamma,\delta}:W_{\gamma,\delta}\hookrightarrow Z_{\gamma\delta}.
\]
For even-even pairs take the ordinary Chen--Ruan obstruction bundle.  Whenever at least one label is odd, set
\begin{equation}\label{eq:CY2-Kzero}
 K_{\gamma,\delta}=0.
\end{equation}

\begin{lemma}[Surface degree defect]\label{lem:CY2-degree-defect}
The data \eqref{eq:CY2-shifts}--\eqref{eq:CY2-Kzero} satisfy the degree-defect identity \eqref{eq:degree-defect} on every nonempty interaction component.
\end{lemma}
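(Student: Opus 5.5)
The identity \eqref{eq:degree-defect} is local on interaction components, so the plan is a case analysis on the parities of the ordered pair $(\gamma,\delta)$ and on whether either label is the identity. In every case we compute the real codimension $c_{\gamma,\delta}$ of $W=Z_\gamma\cap Z_\delta$ in $Z_{\gamma\delta}$ and compare with the shifts \eqref{eq:CY2-shifts}. Throughout we use three facts. First, $X$ has real dimension $4$. Second, by Lemma~\ref{lem:CY2-even-fixed}, nontrivial even sectors are finite sets of points. Third, by Lemma~\ref{lem:CY2-odd-orientation}, odd sectors are Lagrangian surfaces.

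\textbf{Identity cases.} If $\gamma=1$ or $\delta=1$, the normalization $W_{1,\gamma}=Z_\gamma$, $K=0$ gives $c=0$. Then $d_1+d_\gamma-d_\gamma=0$, and the case is trivial.

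\textbf{Even--even pairs with $g,h\neq1$.} Here $W$ is a finite set of points. If $gh\neq1$, then $Z_{gh}$ is zero-dimensional, so $c=0$. The identity to prove is $2+2-2=k$, that is, $\rk_\C E_{g,h}=1$ at each point of $W$. This follows from the line-by-line carry rule: the weights of $g$ are $(\theta,1-\theta)$ and those of $h$ are $(\phi,1-\phi)$ in the simultaneous eigenbasis, and the total age identity \eqref{eq:age-rank} with $c_W=0$ gives $r_W=1+1-1=1$. If $gh=1$, then $Z_{gh}=X$ and $c=4$. Now \eqref{eq:age-rank} gives $r_W+c_W=2$ with $c_W=2$, so $r_W=0$, and indeed $2+2-0=0+4$. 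Alternatively, one can invoke Corollary~\ref{cor:ChenHu-reduction} directly.

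\textbf{Mixed pairs (one even label $g\neq1$, one odd label $\sigma$).} The product $g\sigma$ (or $\sigma g$) is odd, and $K=0$. Here $W=X^g\cap L_\sigma$ is a finite set of points inside a Lagrangian surface $Z_{g\sigma}$. Hence $c=2$, and we need $2+1-1=0+2$, which holds.

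\textbf{Odd--odd pairs.} Here $\sigma\rho=g$ is even. If $g=1$, then $\sigma=\rho$ and $W=L_\sigma$ embeds in $X$ with codimension $2$, so $1+1-0=2$ holds. If $g\neq1$, then any $x\in L_\sigma\cap L_\rho$ is fixed by $g$, so $W\subset X^g$ is finite and maps to the finite set $Z_g$ with $c=0$. We need $1+1-2=0$, which holds.

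The main obstacle, in my view, is the nonidentity even--even case with $gh\neq1$. There one must verify that the Chen--Ruan obstruction bundle has complex rank exactly one at every common fixed point, including when $g$ and $h$ share eigenlines with weights summing to more than $1$ in one direction and less than $1$ in the other. I would handle this by the explicit weight calculation: since $\theta+\phi+(1-\theta)+(1-\phi)=2$ and the $gh$-weights are nonzero, exactly one of the two lines carries.

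I would also note the equality case: $\theta+\phi=1$ on one line forces $\theta+\phi=1$ on the other line as well, which means $gh=1$ and returns us to the second subcase. Nonemptiness hypotheses ensure all listed components exist, and empty components impose no condition.
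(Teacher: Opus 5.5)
Your proof is correct and follows the paper's argument. It uses the same case split: self-interaction of a reflection has codimension two in $X$, distinct reflections meet in the discrete set $X^{\sigma\rho}$ with codimension zero, and mixed rotation--reflection pairs have codimension two in the output surface. As in the paper, the even--even case reduces to twice the Chen--Ruan identity \eqref{eq:age-rank}. Your explicit weight check that $\rk_\C E_{g,h}=1$ when $g,h,gh\neq1$, and your treatment of identity-label pairs, are harmless extra detail that the paper leaves implicit.
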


\begin{proof}
The even-even case is twice the ordinary Chen--Ruan age identity.  For an odd involution $\sigma$, the self-interaction $L_\sigma\hookrightarrow X$ has real codimension two and
\[
 d_\sigma+d_\sigma-d_1=2.
\]
If $\sigma\neq\rho$ are odd, then $g=\sigma\rho\neq1$ is even.  The intersection $L_\sigma\cap L_\rho$ is contained in the discrete set $X^g$, so its inclusion into $Z_g=X^g$ has codimension zero and
\[
 d_\sigma+d_\rho-d_g=1+1-2=0.
\]
Finally, if $1\neq g\in G$ and $\sigma$ is odd, the interaction locus is discrete whereas $Z_{g\sigma}=L_{g\sigma}$ is a surface.  Thus the real codimension is two and
\[
 d_g+d_\sigma-d_{g\sigma}=2+1-1=2.
\]
The reversed mixed product is identical.
\end{proof}

The binary products involving an odd sector therefore require no extra obstruction Euler factor.  They are the canonically oriented clean-intersection Gysin correspondences
\begin{equation}\label{eq:CY2-mixed-product}
 \alpha_\gamma\star\beta_\delta
 =(m_{\gamma,\delta})_*
 \bigl(e_1^*\alpha_\gamma\wedge e_2^*\beta_\delta\bigr)
\end{equation}
whenever at least one label is odd.

\begin{lemma}[Surface sewing by invariant rotation]\label{lem:CY2-sewing}
Under the hypotheses of this subsection there is a $\widehat G$-invariant almost complex structure $J_\Omega$ for which the products consisting of the original even Chen--Hu product and \eqref{eq:CY2-mixed-product} are precisely the ordinary global-quotient Chen--Ruan products. In particular, their Euler/excess comparisons and associativity hold on cohomology.
\end{lemma}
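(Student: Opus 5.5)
The plan is to build $J_\Omega$ by a pointwise hyperk\"ahler-type rotation. Then check that every element of $\widehat G$ is $J_\Omega$-linear. Finally, verify sector by sector that the Chen--Ruan data of $(X,J_\Omega)$ (ages, fixed loci, orientations, obstruction bundles) coincide with the package \eqref{eq:CY2-shifts}--\eqref{eq:CY2-Kzero}. Associativity then follows from the known associativity of the global-quotient Chen--Ruan product \cite{FantechiGottsche,ChenRuan}, which holds for arbitrary almost complex structures.

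\textbf{Construction and invariance.} Let $g_X=\omega(\cdot,J\cdot)$ be the K\"ahler metric. At each point a nonzero $(2,0)$-form on a Hermitian complex surface has real part which is $g_X$-self-dual and of the form $\lambda\, g_X(K\cdot,\cdot)$. Here $\lambda>0$ is a smooth function (a multiple of $|\Omega|$) and $K$ is a $g_X$-orthogonal endomorphism with $K^2=-1$ and $KJ=-JK$. I will check this in a unitary frame, where $\Omega=f\,dz_1\wedge dz_2$. Set $J_\Omega=K$; it is smooth, and it is compatible with $g_X$ and with the orientation of $J$, since both lie in the same sphere of self-dual complex structures. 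For $g\in G$, $g$ preserves $\omega$, $J$ (hence $g_X$) and $\operatorname{Re}\Omega$, so $g$ commutes with $K$. For $\tau$, anti-holomorphy and $\tau^*\omega=-\omega$ give $\tau^*g_X=g_X$, while $\tau^*\operatorname{Re}\Omega=\operatorname{Re}\overline\Omega=\operatorname{Re}\Omega$. Hence $\tau$ also commutes with $K$, and all of $\widehat G$ acts by $J_\Omega$-almost complex automorphisms.

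\textbf{Sector comparison.} For even $g\ne1$, $X^g$ is discrete by Lemma~\ref{lem:CY2-even-fixed}. The differential $dg_p$ preserves the $J_\Omega$-$(2,0)$-form, which up to normalization is $\omega+i\operatorname{Im}\Omega$; so $\det_{J_\Omega}dg_p=1$ and the $J_\Omega$-age is again $1$. For odd $\sigma$, $L_\sigma$ is calibrated by $\operatorname{Re}\Omega=\lambda\,\omega_{K}$, where $\omega_K=g_X(K\cdot,\cdot)$ is the K\"ahler form of $K$; this holds by Lemma~\ref{lem:CY2-odd-orientation} and the fact that it is Lagrangian. By the Wirtinger equality case, $L_\sigma$ is then a $J_\Omega$-complex curve, and its complex orientation is the $\operatorname{Re}\Omega>0$ orientation. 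Moreover $d\sigma$ is $-1$ on the $J_\Omega$-complex normal line, so the $J_\Omega$-age is $1/2$ and the shift is $1$. Thus the inertia sectors, the shifts and the orientations of $(X,J_\Omega)$ reproduce \eqref{eq:CY2-shifts} with the trivialized coefficient systems.

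\textbf{Products.} With ages matching, the Chen--Ruan identity \eqref{eq:age-rank} for $J_\Omega$, compared with Lemma~\ref{lem:CY2-degree-defect}, shows that the $J_\Omega$-obstruction bundle has rank $0$ whenever a label is odd. So these products are the oriented Gysin maps \eqref{eq:CY2-mixed-product}, with the same orientations. For even--even pairs the interaction loci are points or $X$. Over a point, the Euler class of a positive-rank bundle vanishes and a rank-zero bundle contributes $1$; the ranks agree for $J$ and $J_\Omega$ because the ages agree. The pair $(g,g^{-1})$ pushes forward into $X$ with the same ambient orientation. Hence the even products coincide with the Chen--Hu product for $J$.

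The main obstacle I expect is orientation bookkeeping. One must show that the complex orientations of $L_\sigma$ and of its normal bundle for $J_\Omega$ agree with the $\operatorname{Re}\Omega$ orientation and with $o(N)$ as used in \eqref{eq:CY2-mixed-product}. I would first settle this in the flat model $\C^2$ with $\Omega=dz_1\wedge dz_2$ and $L=\R^2$, and then conclude by continuity on each component.
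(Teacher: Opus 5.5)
Your proposal is correct and follows essentially the paper's proof. The steps are the same: the pointwise rotation $J_\Omega=c^{-1}A$ built from the metric and $\operatorname{Re}\Omega$, then $\widehat G$-invariance, matching of ages and $\operatorname{Re}\Omega$-orientations, rank-zero obstructions for every pair with an odd label, comparison of the even pairs, and finally Chen--Ruan associativity. You vary only local steps: $\det_{J_\Omega}dg_p=1$ replaces the path $J_t$, and Wirtinger replaces the $+1$-eigenspace of a $J_\Omega$-linear involution. There is one small slip: with these conventions the $J_\Omega$-$(2,0)$-form is $\omega-i\,c^{-1}\operatorname{Im}\Omega$, which does not affect the determinant argument.

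One step should be made explicit. The mixed pairs $(g,\sigma)$ and $(\sigma,\rho)$ need not commute, so the age--rank identity \eqref{eq:age-rank} you invoke for $J_\Omega$ must come from the general Chen--Ruan/Hepworth dimension formula rather than the abelian line-by-line derivation, as the paper notes.
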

\begin{proof}
We give the construction and the product comparison, including the orientation and rank checks.

\emph{Construction of the rotation.}
Let $h(u,v)=\omega(u,Jv)$ and define the skew-adjoint real endomorphism $A$ by
\[
 h(Au,v)=\operatorname{Re}\Omega(u,v).
\]
At each point choose a $J$-unitary frame in which $\Omega=c\,dz_1\wedge dz_2$ with $c>0$. Direct real-linear computation gives
\[
 A^2=-c^2\operatorname{Id},\qquad AJ=-JA,\qquad
 J_\Omega:=c^{-1}A,\qquad
 J_\Omega(z_1,z_2)=(-\overline z_2,\overline z_1).
\]
The positive scale $c$ is intrinsically characterized by $A^2=-c^2\operatorname{Id}$, so these definitions are smooth and independent of the frame. Thus $J_\Omega^2=-1$. The path
\[
 J_t=(\cos t)J+(\sin t)J_\Omega,\qquad 0\leq t\leq\pi/2,
\]
is a path of almost complex structures and preserves the ambient orientation. No integrability or Ricci-flatness assertion is required.

\emph{Equivariance and fixed-locus orientations.}
An even element preserves $h$ and $\operatorname{Re}\Omega$. An odd element reverses both $\omega$ and $J$, so it also preserves $h$; by phase triviality it preserves $\operatorname{Re}\Omega$. Consequently every element of $\widehat G$ commutes with $A$ and preserves $c$, hence with $J_\Omega$. Even elements commute with the whole path $J_t$.

For an odd involution $\sigma$, its fixed tangent space is the $+1$ eigenspace of $d\sigma$, and is a complex line for $J_\Omega$. The normal is its $-1$ complex eigenline. For nonzero tangent $v$,
\[
 \operatorname{Re}\Omega(v,J_\Omega v)=c\,h(J_\Omega v,J_\Omega v)>0.
\]
Hence the $J_\Omega$ orientation of $L_\sigma$ equals the specified $\operatorname{Re}\Omega$ orientation. The normal orientation induced by $J_\Omega$ agrees with the ambient/sector Gysin convention. Distinct reflection surfaces meeting at a point are transverse $J_\Omega$-complex lines, so their local intersection sign is $+1$.

\emph{Ages and obstruction ranks.}
The even-equivariant path $J_t$ keeps the finite-order complex eigenvalue multiplicities constant on each even fixed component. By Lemma~\ref{lem:CY2-even-fixed}, every nontrivial even fixed sector has $J_\Omega$-age $1$. The odd tangent/normal eigenvalues are $+1,-1$, giving age $1/2$. Thus the shifts are exactly those of \eqref{eq:CY2-shifts}.

For the ordinary orbifold product of the $J_\Omega$-almost complex quotient, the obstruction rank on a common component $W$ is
\[
 r_{\gamma,\delta}
 =\age_{J_\Omega}(\gamma)+\age_{J_\Omega}(\delta)
  -\age_{J_\Omega}(\gamma\delta)
  -\operatorname{codim}_{\C}(W,X^{\gamma\delta}).
\]
This rank formula holds for noncommuting pairs as well: it follows from the representation-theoretic obstruction formula of \cite[Theorem~2]{Hepworth}, equivalently from the orbifold Riemann--Roch calculation in \cite{ChenRuan}. The intrinsic complex obstruction-class identity of Hu--Wang \cite[Theorem~3.2]{HuWang} gives a further formulation of this ordinary orbifold input. No simultaneous eigenline decomposition for $\widehat G$ is assumed. The complete list of nonempty pairs involving an odd label is
\[
\begin{array}{c|c|c|c}
\text{pair type}&\text{input age sum}&\text{output age}&\text{complex codimension}\\ \hline
(1,\sigma),\ (\sigma,1)&1/2&1/2&0\\
(\sigma,\sigma)&1&0&1\\
(\sigma,\rho),\ \sigma\ne\rho&1&1&0\\
(g,\sigma),\ (\sigma,g),\ g\ne1&3/2&1/2&1
\end{array}
\]
Every row has rank zero. A rank-zero obstruction bundle is the zero bundle, so the ordinary orbifold product is exactly the oriented Gysin expression \eqref{eq:CY2-mixed-product}.

\emph{Comparison on even pairs.}
Pairs with an identity label have the ordinary restriction product. For $g\ne1$ and $h=g^{-1}$, the common fixed set is discrete, the output is $X$, the complex codimension is two, and the obstruction rank is zero. The Gysin orientations agree because the ambient complex structures are homotopic. If $g,h,gh\ne1$, the common fixed set is discrete and the obstruction rank is one; its Euler class vanishes on that set. These cases exhaust even pairs and agree with the original $J$-Chen--Hu product.

All binary correspondences, shifts, correction classes and orientations have therefore been identified with those of the ordinary almost complex orbifold. The Chen--Ruan associativity theorem \cite{ChenRuan}, in its group-labelled global quotient formulation \cite{FantechiGottsche}, proves the claim. In particular, its genuine obstruction/excess comparison supplies the Euler identity required here; an explicit vector-bundle formulation is \cite[Theorem~15 and equation~(5)]{Hepworth}. This argument does not cancel a stabilizing real bundle.
\end{proof}

The rotation principle has precedents in hyper-K\"ahler geometry; see \cite[Section~3]{YoshikawaRealK3} for Donaldson's trick on K3 surfaces and Biswas--Wilkin \cite{BiswasWilkin}. The pointwise construction above is stated explicitly because the given K\"ahler metric need not be Ricci-flat. The proof only uses the almost complex theory; it does not infer integrability for an arbitrary given metric from the integrable hyper-K\"ahler case.

For clarity about another orientation issue, the original map $J:TL_\sigma\to N_{L_\sigma/X}$ is a real bundle isomorphism, but reverses orientation when the normal is oriented using the ambient complex orientation and $\operatorname{Re}\Omega|_{L_\sigma}>0$. The permutation from $(v_1,Jv_1,v_2,Jv_2)$ to $(v_1,v_2,Jv_1,Jv_2)$ has sign $-1$. Hence
\[
 e(N_{L_\sigma/X})=-e(TL_\sigma),\qquad
 [L_\sigma]^2=-\chi(L_\sigma)
\]
for each compact component. This is consistent with Proposition~\ref{prop:normal-JTL}, which asserts a real bundle and orientation-local-system isomorphism, not preservation of these chosen orientations.

\begin{theorem}[Calabi--Yau surface realization]\label{thm:CY2-realization}
Under the effective-action hypotheses \eqref{eq:CY2-even-volume}--\eqref{eq:CY2-odd-volume}, let
\begin{equation}\label{eq:CY2-state-space}
 \widetilde H^q_{\Ori}(X,\widehat G)
 =H^q(X;\R)\oplus\bigoplus_{1\ne g\in G}H^{q-2}(X^g;\R)
 \oplus\bigoplus_{\epsilon(\sigma)=1}H^{q-1}(L_\sigma;\R).
\end{equation}
Its group-labelled product is associative; the even restriction is the original Chen--Hu product and all pairs involving an odd label use \eqref{eq:CY2-mixed-product}. Taking conjugation invariants with orbit-sum normalization gives a unital graded algebra and an identification
\[
 H^*_{\Ori}(X,\widehat G)
 :=(\widetilde H^*_{\Ori}(X,\widehat G))^{\widehat G}
 \cong H^*_{\CR}([(X,J_\Omega)/\widehat G];\R).
\]
The construction is canonical relative to $(\omega,J,\Omega)$; no extra odd obstruction bundle is required.
\end{theorem}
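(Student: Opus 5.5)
The plan is to deduce the theorem almost entirely from Lemma~\ref{lem:CY2-sewing}, using Lemmas~\ref{lem:CY2-even-fixed} and \ref{lem:CY2-degree-defect} for bookkeeping.

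\textbf{Step 1: the unreduced state space is the group-labelled Chen--Ruan space.} Start with the rotated structure $J_\Omega$ of Lemma~\ref{lem:CY2-sewing}, which is $\widehat G$-invariant. Identify \eqref{eq:CY2-state-space} with the unreduced group-labelled Chen--Ruan state space $\bigoplus_{\gamma}H^{*-2\age_{J_\Omega}(\gamma)}(X^\gamma;\R)$ of the global quotient $[(X,J_\Omega)/\widehat G]$. The shifts match: $J_\Omega$-ages are $0$, $1$ and $1/2$ for the identity, nontrivial even, and odd labels, giving shifts $0,2,1$. The fixed loci match as well, since $X^g$ is discrete by Lemma~\ref{lem:CY2-even-fixed} and odd sectors are the surfaces $L_\sigma$. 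Coefficients are trivial in both descriptions. Lemma~\ref{lem:CY2-sewing} shows that the $J_\Omega$-complex orientation on each $L_\sigma$ agrees with the $\operatorname{Re}\Omega$ orientation. On discrete even sectors and on $X$ the orientations agree because $J$ and $J_\Omega$ are connected through the path $J_t$, which preserves the ambient orientation.

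\textbf{Step 2: the binary products coincide.} By Lemma~\ref{lem:CY2-sewing}, every binary product agrees with the ordinary global-quotient Chen--Ruan binary product for $J_\Omega$. For even--even pairs these are the original Chen--Hu product, checked case by case in that lemma. Every pair involving an odd label has zero obstruction rank, so it is given by \eqref{eq:CY2-mixed-product}. Associativity of the group-labelled product on cohomology then follows from the Chen--Ruan associativity theorem in the Fantechi--G\"ottsche global quotient form. No separate Euler sewing check is needed. This also confirms that the data form an Euler-admissible package: Lemma~\ref{lem:CY2-degree-defect} gives (A1), orientations give (A2)--(A3), and the ordinary obstruction/excess identity gives (A4). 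The parity convention holds because all ranks and codimensions here are even.

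\textbf{Step 3: invariants and unit.} Conjugation by $k\in\widehat G$ maps $X^\gamma$ to $X^{k\gamma k^{-1}}$ and preserves $J_\Omega$, so it preserves the $J_\Omega$-orientations. In particular it transports the $\operatorname{Re}\Omega$ orientations of odd loci correctly, because odd elements preserve $\operatorname{Re}\Omega$ by phase triviality. Hence the product is conjugation-equivariant, and the invariant subspace is a subalgebra. With orbit-sum normalization, this invariant subalgebra is by definition $H^*_{\CR}([(X,J_\Omega)/\widehat G];\R)$ in the Fantechi--G\"ottsche model. The unit is $1\in H^0(X)$ by Proposition~\ref{prop:unit}.

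\textbf{Canonicity and the main obstacle.} Canonicity follows because $J_\Omega$ is determined pointwise by $h$ and $\operatorname{Re}\Omega$. The main obstacle is the orientation comparison in Step~1. One must ensure that the Gysin orientations used in the original product (from $J$ on even data, from $\operatorname{Re}\Omega$ on odd data) agree with the $J_\Omega$-complex ones, including the normal orientations of $L_\sigma$. The remark preceding the theorem warns that $J$ itself gives the opposite normal orientation. I would handle this by fixing, as the definition of the product, the normal orientation induced from the ambient orientation and the $\operatorname{Re}\Omega$ tangent orientation, and then verifying directly that this coincides with the $J_\Omega$ normal orientation.
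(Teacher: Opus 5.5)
Your proposal is correct and follows essentially the same route as the paper, which reduces everything to Lemma~\ref{lem:CY2-sewing}: shifts, orientations and all binary products are matched with the ordinary Chen--Ruan product for $J_\Omega$, and then associativity, the unit and the invariant identification are quoted from Chen--Ruan and Fantechi--G\"ottsche. Your explicit check that the normal orientation induced by the ambient orientation and $\operatorname{Re}\Omega$ agrees with the $J_\Omega$ one is the orientation comparison the paper carries out inside that lemma.
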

\begin{proof}
Lemma~\ref{lem:CY2-sewing} identifies the entire product with the ordinary group-labelled orbifold product and identifies all orientations and shifts. It also proves equivariance and associativity. The identity-sector constant is a two-sided unit. Passing to invariants is the standard orbit-sum/centralizer identification.
\end{proof}

Effectivity is needed in Lemma~\ref{lem:CY2-even-fixed}: a nontrivial kernel element has fixed locus $X$ and age zero, not an isolated fixed set of age one. Quotienting out a kernel changes the inertia. An ineffective extension must instead retain its additional sectors and recompute their shifts and products. Also, the zero lift of the squared Lagrangian phase in Lemma~\ref{lem:CY2-odd-orientation} and the shift $d_\sigma=1$ are different notions: the latter is twice the rotated orbifold age.

\begin{corollary}[Frobenius property]\label{cor:CY2-Frobenius}
If $X$ is compact, the trace supported on the identity sector,
\[
 \operatorname{Tr}(\alpha)=\int_X\alpha,
\]
defines a nondegenerate invariant pairing
\[
 \langle\alpha,\beta\rangle=\operatorname{Tr}(\alpha\star\beta).
\]
Thus the Calabi--Yau surface orientifold algebra is a finite-dimensional Frobenius algebra, not necessarily commutative.
\end{corollary}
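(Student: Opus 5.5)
The plan is to reduce the Frobenius property to ordinary Poincaré duality on each sector, sector by sector, using the identification of Theorem~\ref{thm:CY2-realization} together with the explicit product formulas from Lemma~\ref{lem:CY2-sewing}.

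\emph{Step 1: invariance of the trace pairing.} I will show $\langle\alpha\star\beta,\gamma\rangle=\langle\alpha,\beta\star\gamma\rangle$. This is immediate from associativity (Theorem~\ref{thm:CY2-realization}): both sides equal $\operatorname{Tr}(\alpha\star\beta\star\gamma)$. The trace only sees the identity-sector component, and that component is well defined on the invariant subspace.

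\emph{Step 2: identify the pairing on sectors.} Take homogeneous classes $\alpha_\gamma$ and $\beta_\delta$. The identity component of $\alpha_\gamma\star\beta_\delta$ is nonzero only when $\gamma\delta=1$, so the pairing matches sector $\gamma$ with sector $\gamma^{-1}$. I then compute each case from the rank table in Lemma~\ref{lem:CY2-sewing} and the even-pair comparison there.
\begin{itemize}
\item Identity sector: the pairing is the ordinary Poincaré pairing $\int_X\alpha\wedge\beta$.
\item A pair $(g,g^{-1})$ with $g\ne1$: the common fixed set is discrete, the obstruction rank is zero, and the output is the point pushforward into $X$. The pairing is therefore $\sum_{p\in X^g}\alpha(p)\beta(p)$, up to the positive Gysin orientation sign. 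This is nondegenerate because it is a diagonal pairing on $H^0$ of a finite set.
\item An odd pair $(\sigma,\sigma)$: here $\sigma^{-1}=\sigma$, $K=0$, and the map is $m:L_\sigma\hookrightarrow X$. The pairing is $\int_X m_*(\alpha\wedge\beta)=\int_{L_\sigma}\alpha\wedge\beta$, with $L_\sigma$ carrying the $\operatorname{Re}\Omega$ orientation of Lemma~\ref{lem:CY2-odd-orientation}.
\end{itemize}
Each component of $L_\sigma$ is a compact oriented surface (Lemma~\ref{lem:CY2-odd-orientation}), so Poincaré duality gives nondegeneracy on $H^*(L_\sigma;\R)$. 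The pairing also has total shifted degree $4$: for example $(q-1)+(q'-1)+2=\dim$ works out, consistent with Proposition~\ref{prop:degree-leibniz}.

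\emph{Step 3: descend to invariants. This is the main obstacle.} On the unreduced space, the pairing is block-off-diagonal between sectors $\gamma$ and $\gamma^{-1}$, and each block is nondegenerate. So the unreduced pairing is nondegenerate. It is also $\widehat G$-invariant, because conjugation transports the data isomorphically and preserves orientations: complex orientations on even sectors and the $\operatorname{Re}\Omega$ orientations on odd sectors, by phase triviality.

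A nondegenerate invariant symmetric-or-not bilinear form on a finite-dimensional real representation of a finite group restricts to a nondegenerate form on the invariants. To see this, for an invariant $v\ne0$ choose $w$ with $\langle v,w\rangle\ne0$ and replace $w$ by its group average. Averaging does not change the pairing with the invariant $v$, and it is valid over $\R$.

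The two points I will check carefully are, first, that the orbit-sum normalization used in Theorem~\ref{thm:CY2-realization} only rescales blocks by positive constants, and second, that $\gamma\mapsto\gamma^{-1}$ respects conjugacy classes. Together these ensure the restricted pairing is still nondegenerate.

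Finite dimensionality holds because $X$, each $X^g$ and each $L_\sigma$ are compact. Noncommutativity is not claimed: for example $AB=-BA$ appears in Theorem~\ref{thm:eisenstein-intro}.
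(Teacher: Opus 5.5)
Your proposal is correct and follows the paper's proof essentially step for step. Both arguments use sectorwise Poincar\'e duality (identity sector, inverse point sectors, and the odd self-Gysin pairing $\int_{L_\sigma}\alpha\wedge\beta$), $\widehat G$-invariance of the trace, the averaging argument for nondegeneracy on invariants, and associativity for Frobenius invariance. The paper states one point more plainly: odd elements preserve the ambient orientation of $X$ because the complex dimension is two, and this is what makes $\operatorname{Tr}$ invariant. Your concern about orbit-sum rescaling is unnecessary, since the invariant algebra is a subspace of the unreduced space with the restricted pairing.
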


\begin{proof}
The identity sector is paired by ordinary Poincar\'e duality.  Nontrivial even sectors pair with their inverse point sectors.  An odd involution pairs with itself, and the trace of the self-Gysin product is
\[
 \int_{L_\sigma}\alpha\wedge\beta.
\]
Poincar\'e duality on each canonically oriented compact surface $L_\sigma$ is nondegenerate. Every element of $\widehat G$ preserves the ambient orientation, since the complex dimension is two, so the trace is $\widehat G$-invariant. For a nonzero invariant vector $v$, choose $w$ with $\langle v,w\rangle\ne0$ and average $w$ over the group; invariance of the pairing preserves that value. Thus restriction to finite-group invariants is nondegenerate. Associativity gives invariance of the trace pairing.
\end{proof}

By Corollary~\ref{cor:phase-trivial-criterion}, the phase-trivial hypothesis in this subsection is intrinsic: for any finite Calabi--Yau orientifold action preserving the volume line, it can be achieved after rephasing $\Omega$ exactly when $[\mu_\R]=0$.

\section{Dihedral and real surface examples}\label{sec:examples}

The first examples make the closed-sector involution explicit and exhibit the grading constraints on odd interactions. The Eisenstein calculation supplies a full algebra with nontrivial even point sectors. The subsequent square-torus and real K3 examples vary the topology of the reflection locus, test nonzero normal Euler classes, and distinguish algebras with identical graded dimensions.

\subsection{The local reflection pair on the complex line}

Consider the two anti-linear involutions
\[
 \sigma_0(z)=\overline z,
 \qquad
 \sigma_\theta(z)=e^{2\pi i\theta}\overline z.
\]
Their fixed loci are the real lines
\[
 L_0=\R,
 \qquad
 L_\theta=e^{\pi i\theta}\R.
\]
As proved in Lemma~\ref{lem:unitary-conjugacy}, the individual involutions are unitarily equivalent.  Their product, however, is the rotation
\[
 \sigma_0\sigma_\theta=e^{-2\pi i\theta}.
\]
Thus the relative angle is visible in the even output sector although it is invisible in either odd sector considered separately.

If $\theta\notin\Z$, the two real lines are transverse at the origin.  Relative angles enter Fredholm indices only after a domain, boundary gradings, asymptotic conditions and a Fredholm setup have been specified. The pair of lines by itself does not construct the bundle $K_{\sigma_0,\sigma_\theta}$ or its coherent sewing maps. This example illustrates the relative geometric datum and the failure of a universal single-sector age; it is not a construction of an admissible package.

\subsection{The order eight dihedral action on the projective line}

Let
\begin{equation}\label{eq:D4}
 D_8=\langle r,s\mid r^4=s^2=1,\ srs=r^{-1}\rangle,
\end{equation}
where $D_8$ denotes the dihedral group of order eight.  Let
\[
 r[z_0:z_1]=[z_0:iz_1],
 \qquad
 s[z_0:z_1]=[\overline z_0:\overline z_1].
\]
The even subgroup is $G=\langle r\rangle\cong\Z_4$ and $s$ is anti-holomorphic.  The two torus fixed points are
\[
 p_0=[1:0],\qquad p_\infty=[0:1].
\]
For $r$, the tangent weights at $p_0$ and $p_\infty$ are respectively $1/4$ and $3/4$.  Thus
\begin{center}
\begin{tabular}{c|cc}
sector & $p_0$ & $p_\infty$\\ \hline
$r$   & $1/4$ & $3/4$\\
$r^2$ & $1/2$ & $1/2$\\
$r^3$ & $3/4$ & $1/4$
\end{tabular}
\end{center}
The even Chen--Ruan sector space is therefore the identity cohomology of $\mathbb{CP}^1$ together with six point-sector generators carrying the indicated shifts.

For the unreduced global-quotient product, the local multiplication can be read directly from the weights.  Let $u_{k,0}$ and $u_{k,\infty}$ denote the unit classes of the $r^k$ point sectors.  At $p_0$, the products
\[
 u_{1,0}\star u_{1,0}=u_{2,0},
 \qquad
 u_{1,0}\star u_{2,0}=u_{3,0}
\]
have no obstruction contribution.  The product $u_{1,0}\star u_{3,0}$ lands in the identity sector; its local intersection contribution is the point class $[p_0]\in H^2(\mathbb{CP}^1)$.  At $p_\infty$, the corresponding weights are reversed.  In particular, the product $u_{1,\infty}\star u_{1,\infty}$ has a rank-one obstruction bundle over a point and therefore vanishes in ordinary cohomology for degree reasons, whereas $u_{3,\infty}\star u_{3,\infty}=u_{2,\infty}$.  These asymmetries are exchanged by inversion of the rotation label and are exactly accounted for by the Chen--Ruan degree shifts.

These formulas use the unreduced convention followed by the orbit-sum invariant identification. Quotient stack integration changes the trace normalization; no additional unspecified scalar is inserted into this multiplication table. The point of the example is the even-sector geometry and reflection action.

The reflection $s$ normalizes $G$ by $srs=r^{-1}$.  For the closed-sector involution, however,
\[
 q(r)=sr^{-1}s=r,
\]
so $\RR$ preserves the $r$-label and acts anti-linearly on each fixed-point component with the phase $e^{\pi i\age(r,p)}$.  The same holds for $r^2$ and $r^3$.  Theorem~\ref{thm:closed-real} gives a real form of the full even Chen--Ruan algebra.

Now consider the odd elements
\[
 s_j=r^j s,
 \qquad j=0,1,2,3.
\]
Each $L_j=X^{s_j}$ is a circle in $\mathbb{CP}^1\cong S^2$, and every $L_j$ passes through $p_0$ and $p_\infty$.  The group law gives
\begin{equation}\label{eq:odd-odd-D4}
 s_i s_j=r^{i-j}.
\end{equation}
Thus a product of two reflection states, when defined, must land in the rotation sector labeled by $r^{i-j}$.  For $i\neq j$, the interaction locus is supported at the two points $p_0,p_\infty$.  The relative tangent lines of $L_i$ and $L_j$ at those points determine the Real Fredholm orientation signs.

Equation \eqref{eq:odd-odd-D4} is important conceptually.  The output age depends on $i-j$, while the individual reflection germs are all locally conjugate.  Hence the degree defect cannot be recovered from two single-sector odd ages.  A putative extension must satisfy the degree and coefficient constraints, not merely choose orientation signs. In particular this example does not admit the integer odd shifts of Definition~\ref{def:admissible-package} together with all these even shifts: at $p_0$ a reflection pair with product $r$ would require
\[
 d_{s_1}+d_{s_0}-\tfrac12=k_{s_1,s_0}+c_{s_1,s_0}\in\Z,
\]
which is impossible for integer $d_{s_i}$. In addition, a reflection self-Gysin map on $\mathbb{CP}^1$ has odd real codimension, outside our even-codimension formal theorem. Thus this is an illustration of even-sector and relative fixed-locus geometry, not an odd-sector realization; determinant data alone cannot remove its grading obstruction.

\subsection{An order six dihedral action on the projective plane}

Let
\[
 D_6=\langle r,s\mid r^3=s^2=1,\ srs=r^{-1}\rangle
\]
act by
\begin{equation}\label{eq:D6-action}
 r[z_0:z_1:z_2]=[z_0:\omega z_1:z_2],
 \qquad
 s[z_0:z_1:z_2]=[\overline z_0:\overline z_1:\overline z_2],
\end{equation}
where $\omega=e^{2\pi i/3}$.  Then $srs=r^{-1}$ because complex conjugation sends $\omega$ to $\omega^{-1}$.

The rotation $r$ has two connected fixed components: the hyperplane
\[
 H=\{z_1=0\}\cong\mathbb{CP}^1
\]
and the point $p_1=[0:1:0]$.  Along $H$ the normal line is the $z_1$ direction and has $r$-weight $1/3$.  Hence
\[
 \age(r,H)=\frac13,
 \qquad
 \age(r^2,H)=\frac23.
\]
For the product of two $r^2$ sectors along $H$, the three-point normal weights are
\[
 \frac23+\frac23+\frac23=2.
\]
Thus the ordinary Chen--Ruan obstruction bundle has complex rank one.  It is the normal line
\[
 N_{H/\mathbb{CP}^2}\cong\mathcal O_{\mathbb{CP}^1}(1),
\]
so its Euler class is the nonzero class
\[
 c_1\bigl(\mathcal O(1)\bigr)\in H^2(H;\Z).
\]
This gives a nontrivial positive-dimensional Chen--Hu twist factor on the even part.

The odd reflection $s$ is standard complex conjugation, so its fixed locus is
\[
 X^s=\mathbb{RP}^2.
\]
The other two reflection loci are the fixed sets of $rs$ and $r^2s$ and are again real projective planes obtained by rotating $\mathbb{RP}^2$.  Their pairwise products are labeled by $r$ or $r^2$.  As before, the labels and the clean intersections are explicit, but a cohomological odd--odd product additionally requires determinant/Fredholm orientation data.  This example displays a nonzero even obstruction Euler class together with a positive-dimensional Real reflection locus, but does not realize the full integer-odd-shift package. Along the hyperplane the output $r$-sector has shift $2/3$; for a pair of reflections producing $r$, the difference of integer odd shifts and $2/3$ cannot equal an integer rank plus codimension. The nonorientability of $\mathbb{RP}^2$ is a further coefficient issue, separate from this grading obstruction.

\subsection{What toric data determine}

For a smooth toric global quotient, the ordinary even Chen--Hu product is algorithmic: at a torus fixed component one reads the fractional weights of two group elements, determines the obstruction eigenbundles, and multiplies by their Euler classes.  Theorem~\ref{thm:closed-real} then adds an orientifold real structure on this even algebra whenever an anti-complex involution normalizes the even group.

For odd sectors, toric coordinates can describe the relative fixed-locus geometry, but they do not produce a canonical single-sector age.  To extend the toric algorithm across reflection sectors one must supplement the fan/weight data by an admissible determinant/Fredholm package.  In this sense the present framework separates the combinatorial part of the problem from the Real orientation problem.

\subsection{The Eisenstein abelian surface}\label{subsec:eisenstein}

We finish with an effective compact example whose surface product is canonically defined from the normalized Calabi--Yau data, identified with the rotated ordinary orbifold ring by Theorem~\ref{thm:CY2-realization}, and computed explicitly.  Let
\[
 \omega=e^{2\pi i/3},
 \qquad
 E_\omega=\C/(\Z+\omega\Z),
 \qquad
 X=E_\omega\times E_\omega,
\]
with holomorphic volume form
\[
 \Omega=dz_1\wedge dz_2.
\]
Define
\begin{equation}\label{eq:eisenstein-action}
 r(z_1,z_2)=(\omega z_1,\omega^{-1}z_2),
 \qquad
 \tau(z_1,z_2)=(-\overline z_1,-\overline z_2).
\end{equation}
Then
\[
 r^3=\tau^2=1,
 \qquad
 \tau r\tau=r^{-1},
\]
so $\widehat G=\langle r,\tau\rangle$ is the order-six dihedral group, isomorphic to $S_3$.  Moreover
\[
 r^*\Omega=\Omega,
 \qquad
 \tau^*\Omega=\overline\Omega,
\]
so Theorem~\ref{thm:CY2-realization} applies.

\subsubsection{The rotation fixed set}

Put
\[
 \xi=\frac{2+\omega}{3}.
\]
Since $(1-\omega)\xi=1$ in the lattice,
\[
 K=\{0,\xi,2\xi\}=\operatorname{Fix}(z\mapsto\omega z)
\]
is the three-point fixed set on $E_\omega$.  The same set is fixed by $z\mapsto-\overline z$.  Hence
\begin{equation}\label{eq:eisenstein-F}
 F=X^r=X^{r^2}=K\times K,
 \qquad |F|=9.
\end{equation}
For $p\in F$, let
\[
 u_{1,p}\in H^0(X^r),
 \qquad
 u_{2,p}\in H^0(X^{r^2})
\]
denote the point-sector unit classes.  Both have shifted degree two.

The even Chen--Ruan products are
\begin{equation}\label{eq:eisenstein-even-point-products}
 u_{1,p}\star u_{1,p'}=0,
 \qquad
 u_{2,p}\star u_{2,p'}=0,
 \qquad
 u_{1,p}\star u_{2,p'}=\delta_{p,p'}q,
\end{equation}
where $q\in H^4(X)$ is the normalized point class.  The first two products vanish because the corresponding Chen--Ruan obstruction bundle has positive rank over a point.

\subsubsection{The three reflection tori}

Let
\[
 s_i=r^i\tau,
 \qquad
 L_i=X^{s_i},
 \qquad i=0,1,2.
\]
Write $z=x+y\omega$ on one Eisenstein factor.  The relevant real circles have equations
\[
 C_0:\ 2x-y=0\pmod1,
 \qquad
 C_1:\ x+y=0\pmod1,
 \qquad
 C_2:\ 2y-x=0\pmod1.
\]
Then
\begin{equation}\label{eq:eisenstein-Li}
 L_0=C_0\times C_0,
 \qquad
 L_1=C_1\times C_2,
 \qquad
 L_2=C_2\times C_1.
\end{equation}
Each $L_i$ is a canonically oriented special Lagrangian two-torus, and for $i\neq j$,
\begin{equation}\label{eq:eisenstein-intersections}
 L_i\cap L_j=F.
\end{equation}

Let $x_1,y_1,x_2,y_2\in H^1(X;\Z)$ be the cohomology basis dual to the lattice coordinates on the two elliptic factors, oriented so that
\[
 q=x_1y_1x_2y_2,
 \qquad
 \int_Xq=1.
\]
For the first torus, an oriented parametrization is
\[
 (s,t)\longmapsto\bigl((1+2\omega)s,-(1+2\omega)t\bigr),
 \qquad (s,t)\in(\R/\Z)^2.
\]
Indeed, $1+2\omega=i\sqrt3$, so the pullback of $\operatorname{Re}\Omega$ is $3\,ds\wedge dt$. Writing $a_0=ds$ and $b_0=dt$, restriction gives
\[
 \iota_0^*(x_1,y_1,x_2,y_2)=(a_0,2a_0,-b_0,-2b_0).
\]
The defining adjunction
\[
 \int_X\beta\wedge(\iota_0)_*\eta
 =\int_{L_0}\iota_0^*\beta\wedge\eta
\]
determines the Gysin map by Poincar\'e duality. In particular it gives the class $P_0$ below. Transport by $r$, which preserves $\operatorname{Re}\Omega$ and permutes the three tori, gives $P_1$ and $P_2$. The Poincar\'e duals are
\begin{align}
 P_0&=(2x_1-y_1)(2x_2-y_2),\label{eq:P0}\\
 P_1&=(x_1+y_1)(x_2-2y_2),\label{eq:P1}\\
 P_2&=(x_1-2y_1)(x_2+y_2).\label{eq:P2}
\end{align}
A direct exterior-algebra computation gives
\begin{equation}\label{eq:P-relations}
 P_i^2=0,
 \qquad
 P_iP_j=9q\quad(i\neq j).
\end{equation}
In particular all nine local intersection signs in \eqref{eq:eisenstein-intersections} are $+1$.

For the odd-sector unit classes $e_i=1\in H^0(L_i)$, Theorem~\ref{thm:CY2-realization} therefore gives
\begin{equation}\label{eq:eisenstein-odd-odd-unreduced}
 e_i\star e_j
 =\sum_{p\in F}u_{i-j,p},
 \qquad i\neq j,
\end{equation}
where the rotation index is read modulo $3$.  The self-product is
\begin{equation}\label{eq:eisenstein-self-product}
 e_i^2=(\iota_i)_*(1)=P_i.
\end{equation}
More generally, if $\alpha\in H^*(L_i)$ and $\beta\in H^*(L_j)$ with $i\neq j$, then
\begin{equation}\label{eq:eisenstein-general-odd-odd}
 \alpha\star\beta
 =\sum_{p\in F}\alpha|_p\,\beta|_p\,u_{i-j,p}.
\end{equation}
Thus a product between distinct odd sectors vanishes whenever one of the inputs has positive ordinary cohomological degree.

For classes on the same odd torus one has the self-Gysin formula
\begin{equation}\label{eq:eisenstein-same-odd}
 \alpha\star\beta=(\iota_i)_*(\alpha\wedge\beta).
\end{equation}

\subsubsection{The invariant algebra}

We now pass from the unreduced $\widehat G$-graded algebra to its $\widehat G$-invariant subalgebra.  Via the standard orbit-sum map, this algebra is naturally identified with the direct sum over conjugacy classes of the corresponding centralizer-invariant sector cohomologies.  We keep the orbit-sum normalization because it makes the integral structure constants transparent.

The trace used below is the unreduced trace restricted to invariants.  If one instead uses the conventional quotient-stack integration on the identity sector, the trace and Frobenius pairing are multiplied by the global factor $|\widehat G|^{-1}=1/6$; the multiplication table is unchanged.

Define the invariant two-class
\begin{equation}\label{eq:Theta}
 \Theta
 =-2x_1x_2+x_1y_2+y_1x_2+y_1y_2.
\end{equation}
Then
\begin{equation}\label{eq:identity-invariants}
 H^*(X)^{\widehat G}
 =\R\langle1,\Theta,q\rangle,
 \qquad
 \Theta^2=6q.
\end{equation}
Here is an explicit derivation of the invariant space. On degree-one classes the generators act by
\[
\begin{aligned}
 r^*(x_1,y_1,x_2,y_2)&=(-y_1,x_1-y_1,-x_2+y_2,-x_2),\\
 \tau^*(x_1,y_1,x_2,y_2)&=(-x_1+y_1,y_1,-x_2+y_2,y_2).
\end{aligned}
\]
The action of $r$ has no invariant vector in degree one. Since the whole group preserves $q$, Poincar\'e duality gives no invariant vector in degree three either. In degree two the classes $x_1y_1$ and $x_2y_2$ are negated by $\tau$. For a cross-term
\[
 a\,x_1x_2+b\,x_1y_2+c\,y_1x_2+d\,y_1y_2,
\]
invariance under $r$ gives $b=c$ and $d=-a-c$, while invariance under $\tau$ gives $a=-2b=-2c$. Thus $c=d=b$ and the invariant line is spanned by $\Theta$. Expansion gives $\Theta^2=2(2+1)q=6q$. Together with degrees zero and four this proves \eqref{eq:identity-invariants}.

Equations \eqref{eq:P0}--\eqref{eq:P2} imply
\begin{equation}\label{eq:P-sum}
 P_0+P_1+P_2=-3\Theta.
\end{equation}

For $p\in F$ put
\begin{equation}\label{eq:Up}
 U_p=u_{1,p}+u_{2,p},
 \qquad
 W=\sum_{p\in F}U_p.
\end{equation}
Equation \eqref{eq:eisenstein-even-point-products} gives
\begin{equation}\label{eq:Up-product}
 U_pU_{p'}=2\delta_{p,p'}q.
\end{equation}

Choose an oriented basis $a_0,b_0\in H^1(L_0;\mathbb Z)$ with
\[
 \int_{L_0}a_0\wedge b_0=1,
\]
and transport it equivariantly to the other reflection tori by the $\widehat G$-action.  Thus the classes $a_i,b_i$ are conjugation-compatible, and with
\[
 t_i=a_i\wedge b_i
\]
we have $\int_{L_i}t_i=1$ for every $i$.  Let
\[
 E=\sum_{i=0}^2e_i,
 \qquad
 A=\sum_{i=0}^2a_i,
 \qquad
 B=\sum_{i=0}^2b_i,
 \qquad
 T=\sum_{i=0}^2t_i.
\]
These are orbit sums for the reflection conjugacy class.  Their shifted degrees are
\[
 |E|=1,
 \qquad |A|=|B|=2,
 \qquad |T|=3.
\]

\begin{theorem}[Explicit Eisenstein orientifold algebra]\label{thm:eisenstein-algebra}
For the action \eqref{eq:eisenstein-action}, the centralizer-invariant orientifold state space of Theorem~\ref{thm:CY2-realization} has dimension $16$.  With orbit-sum normalization it has basis
\[
 1,\Theta,q,
 \qquad
 \{U_p\}_{p\in F},
 \qquad
 E,A,B,T.
\]
Apart from the unit and products forced to vanish by degree or sector support, the multiplication is
\begin{align}
 \Theta^2&=6q,\label{eq:EA1}\\
 U_pU_{p'}&=2\delta_{p,p'}q,\label{eq:EA2}\\
 E^2&=3(W-\Theta),\label{eq:EA3}\\
 \Theta E=E\Theta&=-6T,\label{eq:EA4}\\
 U_pE=EU_p&=2T,\label{eq:EA5}\\
 ET=TE&=3q,\label{eq:EA6}\\
 AB&=3q,\qquad BA=-3q.\label{eq:EA7}
\end{align}
Moreover
\begin{equation}\label{eq:EAzero1}
 EA=AE=EB=BE=A^2=B^2=0,
\end{equation}
and
\begin{equation}\label{eq:EAzero2}
 \Theta U_p=\Theta A=\Theta B=\Theta T
 =U_pA=U_pB=U_pT=0.
\end{equation}
The trace is determined by $\operatorname{Tr}(q)=1$ and vanishes on the remaining basis elements.  The corresponding invariant pairing is nondegenerate, with basic nonzero values
\begin{align}
 \langle1,q\rangle&=\langle q,1\rangle=1,\nonumber\\
 \langle\Theta,\Theta\rangle&=6,\nonumber\\
 \langle U_p,U_{p'}\rangle&=2\delta_{p,p'},\nonumber\\
 \langle E,T\rangle&=\langle T,E\rangle=3,\nonumber\\
 \langle A,B\rangle&=3,\qquad
 \langle B,A\rangle=-3.\label{eq:EA-pairing}
\end{align}
Thus the algebra is Frobenius but is not graded-commutative with respect to the shifted sector grading.
\end{theorem}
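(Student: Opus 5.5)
The plan is to compute everything from Theorem~\ref{thm:CY2-realization} and the unreduced formulas \eqref{eq:eisenstein-even-point-products}--\eqref{eq:eisenstein-same-odd}, applied to orbit sums. \emph{Step one: dimension count.} The identity sector contributes $\R\langle1,\Theta,q\rangle$ by \eqref{eq:identity-invariants}. The conjugacy class $\{r,r^2\}$ has centralizer $\langle r\rangle$. Since $r$ fixes each point of $F$, the invariants are the orbit sums $U_p=u_{1,p}+u_{2,p}$. Here I will use that $\tau$ fixes $F$ pointwise, because $K$ is fixed by $z\mapsto-\overline z$, and that it carries $u_{1,p}$ to $u_{2,p}$; this gives $9$ classes. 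The reflection class $\{s_0,s_1,s_2\}$ has centralizer $\langle s_0\rangle$ acting on $H^*(L_0)$. I must check that $s_0$ acts trivially on $H^*(L_0)$: it fixes $L_0$ pointwise, so the claim is immediate. Orbit sums of $1,a_0,b_0,t_0$ then give $E,A,B,T$. The total is $3+9+4=16$.

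\emph{Step two: products.} I treat the products sector pair by sector pair.
\begin{itemize}
\item $\Theta^2$ and $U_pU_{p'}$ are already established by \eqref{eq:identity-invariants} and \eqref{eq:Up-product}.
\item $E^2=\sum_i e_i^2+\sum_{i\ne j}e_ie_j$. By \eqref{eq:eisenstein-self-product} and \eqref{eq:P-sum}, the diagonal part is $-3\Theta$. By \eqref{eq:eisenstein-odd-odd-unreduced}, the six ordered pairs $i\ne j$ give each of $u_{1,p},u_{2,p}$ three times, hence $3W$.
\item For mixed even--odd products $\Theta e_i=(\iota_i)_*(\iota_i^*\Theta)$, I compute $\iota_0^*\Theta$ from $\iota_0^*(x_1,y_1,x_2,y_2)=(a_0,2a_0,-b_0,-2b_0)$. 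This gives $(2-2-2-4)\,a_0b_0=-6t_0$ in $H^2(L_0)$ under the mixed-product convention, and transporting gives $-6T$.
\item For $U_pE$, each $u_{k,p}\star e_i$ is a point Gysin map into $L_{s}$ with $s=r^ks_i$. Summing over $k=1,2$ and $i$ gives each $t_j$ twice, hence $2T$.
\item For $ET$, the relevant terms are the self-products $e_i\star t_i=(\iota_i)_*t_i=q$, summed to give $3q$. Cross terms $i\ne j$ vanish by \eqref{eq:eisenstein-general-odd-odd}, since $t_j$ has positive degree.
\item For $AB$, I use $a_i\star b_i=(\iota_i)_*(a_ib_i)=q$ and $b_i\star a_i=-q$, which produces the failure of graded commutativity: $A$ and $B$ have even shifted degree but anticommute.
\end{itemize}
The vanishing relations \eqref{eq:EAzero1}--\eqref{eq:EAzero2} follow from three facts: \eqref{eq:eisenstein-general-odd-odd}, the fact that degree-one classes restrict to zero at points, and the bound of form degree $2$ on tori together with degree $4$ on $X$.

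\emph{Step three: trace and pairing.} The trace only sees $H^4(X)$, so the pairing values are read off from the products above. Nondegeneracy follows by inspecting the block structure: $(1,q)$, $\Theta$, the diagonal $U_p$ block, $(E,T)$ and $(A,B)$ are each nondegenerate. Alternatively it follows from Corollary~\ref{cor:CY2-Frobenius}. Associativity is inherited from Theorem~\ref{thm:CY2-realization}.

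\emph{Main obstacle.} The hard part is sign and orientation bookkeeping. This includes the sign of $\Theta e_i$, the sign convention for the mixed Gysin product $u\star e$ and $e\star u$, and whether $(\iota_i)_*(a_ib_i)$ equals $+q$ under the $\operatorname{Re}\Omega$ orientation. The equivariant transport of $a_0,b_0$ also matters: $\tau$ might reverse orientation on $L_1,L_2$ relative to transport by $r$. I would first verify, using the $\operatorname{Re}\Omega$-preserving property of all of $\widehat G$, that transported bases remain oriented. I would then redo the restriction computation for $\Theta$ on $L_0$ carefully, checking it against $\langle\Theta,E\cdot E\rangle$ consistency via associativity, namely $\Theta(EE)=(\Theta E)E$.
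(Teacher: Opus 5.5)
\textbf{Gap in the vanishing of $EA$, $AE$, $EB$, $BE$.} Most of your argument follows the paper's proof: the same $3+9+4$ count, the $E^2$ expansion, the restriction $\iota_0^*\Theta=-6t_0$, the two-channel count for $U_pE$, and the self-Gysin computations. But your justification of \eqref{eq:EAzero1} has a real gap for $EA$, $AE$, $EB$, $BE$.

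Your three facts do handle the cross terms $e_i\star a_j$ with $i\neq j$, because $a_j$ restricts to zero at the points of $F$. They do not handle the equal-label terms $e_i\star a_i=a_i\star e_i=(\iota_i)_*a_i$. These lie in $H^3(X;\R)$, so no form-degree bound kills them. They are also individually nonzero: by the defining adjunction,
$\int_X x_2\wedge(\iota_0)_*a_0=\int_{L_0}(-b_0)\wedge a_0=1$.
So $e_0\star a_0\neq0$ in the unreduced algebra, and $EA=0$ holds only because the three orbit terms cancel.

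The missing step is an invariance argument, which is what the paper uses. $EA$ is a product of $\widehat G$-invariant classes. Its only surviving component is in the identity sector, so it lies in $H^3(X;\R)^{\widehat G}$. That space is zero: $r^*$ has no invariant vector in degree one, hence none in degree three by Poincar\'e duality, as in the derivation of \eqref{eq:identity-invariants}. The same argument gives $AE=EB=BE=0$.

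\textbf{Two smaller corrections.} First, $\Theta\star e_i$ is not $(\iota_i)_*(\iota_i^*\Theta)$; that would be a class in $H^4(X)$. For the pair $(1,s_i)$ the output map is the identity of $L_i$, so $\Theta\star e_i=\iota_i^*\Theta$ in the $s_i$-sector. That is in fact what you used to get $-6t_0$.

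Second, in your list of vanishing facts you need ``positive-degree classes restrict to zero at points,'' not only degree-one classes. The relations $\Theta U_p=0$ and $U_pT=0$ use the vanishing at points of the degree-two classes $\Theta$ and $t_i$.
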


\begin{proof}
The invariant identity sector has dimension three by \eqref{eq:identity-invariants}. The two nonidentity rotations are conjugate, their common fixed set has nine points, and $\tau$ fixes each of those points. Their invariant contribution therefore has basis $U_p$, $p\in F$, and dimension nine. The three reflections form one conjugacy class; the centralizer of $s_0$ is $\{1,s_0\}$ and acts pointwise on $L_0$. This contributes all of $H^*(L_0)$, with basis represented by the orbit sums $E,A,B,T$. The total dimension is consequently $3+9+4=16$.

The ambient relation is already proved. The inverse-point-sector products give \eqref{eq:EA2}. In the expansion of $E^2$, the three equal-label terms add to $P_0+P_1+P_2=-3\Theta$. Among the six unequal-label ordered pairs, three have product $r$ and three have product $r^2$. Equation~\eqref{eq:eisenstein-odd-odd-unreduced} therefore gives $3W$ for the remaining terms, proving \eqref{eq:EA3}.

The displayed restriction to $L_0$ gives
\[
 \iota_0^*\Theta=-6a_0b_0=-6t_0.
\]
Equivariance gives the same formula on each $L_i$, hence \eqref{eq:EA4}. For each output reflection label, there are exactly two rotation-reflection channels in $U_pE$: one from each of $u_{1,p}$ and $u_{2,p}$. Both are the Gysin map of the positively oriented point $p$ into that reflection torus, and each yields its normalized top class. Summing yields $U_pE=2T$. Reversing the inputs gives the same count, proving \eqref{eq:EA5}.

For $ET$, only equal-reflection labels contribute; each self-Gysin map sends $t_i$ to $q$. Thus $ET=TE=3q$. The identical argument applied to $a_i\wedge b_i=t_i$ and $b_i\wedge a_i=-t_i$ gives $AB=3q$ and $BA=-3q$. Products on different reflection tori with a positive-degree input vanish by restriction to points. The remaining contributions to $EA,AE,EB,BE$ lie in $H^3(X)^{\widehat G}=0$, and $A^2=B^2=0$ follows from the alternating wedge product. Degree and point-sector restriction give \eqref{eq:EAzero2} and all other stated zero products. Associativity follows from the surface theorem, whose product these formulas describe.

The trace pairing is the coefficient of $q$. Grouping the displayed basis into $(1,q)$, $\Theta$, the nine $U_p$, $(E,T)$ and $(A,B)$ gives blocks
\[
 \begin{pmatrix}0&1\\1&0\end{pmatrix},\quad
 (6),\quad 2I_9,\quad
 \begin{pmatrix}0&3\\3&0\end{pmatrix},\quad
 \begin{pmatrix}0&3\\-3&0\end{pmatrix}.
\]
Their determinants multiply to
\[
 \det\langle\ ,\ \rangle=(-1)\,6\,2^9\,(-9)\,9
 =248832\ne0.
\]
Thus the trace pairing is nondegenerate; associativity gives its Frobenius invariance.
\end{proof}

Two consequences of the multiplication are worth recording. From \eqref{eq:EA3}--\eqref{eq:EA6},
\[
 E^4=216q,
\]
and both bracketings give the same value.  Likewise
\[
 \Theta E^2=-18q=(\Theta E)E.
\]
These identities illustrate the surface sewing theorem.

\subsection{One reflection: a reusable multiplication formula}\label{subsec:one-reflection}

The preceding example has nontrivial even point sectors. The following examples instead take $G=\{1\}$ and $\widehat G=\langle\tau\rangle\cong C_2$, and distinguish disconnected reflection loci, free actions, and nonzero normal Euler classes. They are applications of the corrected surface theorem, not constructions of a different generalized cohomology theory. We keep real coefficients, the ordinary-form sign convention, and the trace normalized by $\operatorname{Tr}(q)=1$ for $\int_Xq=1$.

\begin{proposition}[A single reflection]\label{prop:one-reflection}
Let $X$ and $\tau$ satisfy the hypotheses of Theorem~\ref{thm:CY2-realization} with $G=\{1\}$. Write the fixed surface as $L=\coprod_{j=1}^m L_j$ and let $i_j:L_j\hookrightarrow X$. Then
\begin{equation}\label{eq:one-reflection-space}
 \mathcal B^k=H^k(X;\R)^\tau\oplus
 \bigoplus_{j=1}^m H^{k-1}(L_j;\R).
\end{equation}
For ambient invariant classes $a,b$ and reflection classes $\eta_j,\xi_j$, its complete product is
\begin{align}
 a\star b&=a\smile b,\nonumber\\
 a\star\eta_j&=i_j^*a\wedge\eta_j,\qquad
 \eta_j\star a=\eta_j\wedge i_j^*a,\label{eq:one-reflection-product}\\
 \eta_j\star\xi_k&=
 \begin{cases}(i_j)_*(\eta_j\wedge\xi_j),&j=k,\\0,&j\ne k.\end{cases}\nonumber
\end{align}
For the reflection-sector unit $e_j$, its normalized top class $t_j$, and $P_j=(i_j)_*1$, one has
\begin{equation}\label{eq:reflection-fourth-power}
 e_j^2=P_j,\quad e_jt_j=q,\quad
 P_je_j=-\chi(L_j)t_j,\quad
 e_j^4=P_j^2=-\chi(L_j)q.
\end{equation}
\end{proposition}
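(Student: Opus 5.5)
We specialize Theorem~\ref{thm:CY2-realization} to $\widehat G=\{1,\tau\}$. There are no nontrivial even labels, so the state space \eqref{eq:CY2-state-space} has only the identity sector $H^*(X;\R)$ and the reflection sector $H^{*-1}(L;\R)$. The group is abelian, so every label is its own conjugacy class. Taking $\widehat G$-invariants then means taking $\tau$-invariants in each sector separately. On the identity sector this gives $H^k(X;\R)^\tau$. On the reflection sector $\tau$ fixes $L$ pointwise, so it acts trivially on $H^*(L_j)$. Splitting $L$ into its components $L_j$ yields \eqref{eq:one-reflection-space}. The orbit-sum normalization is trivial here, so no scalars enter.

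Next we read off the product table from the group-labelled description in Lemma~\ref{lem:CY2-sewing}, taking the label pairs in turn.

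\textbf{Pair $(1,1)$.} This is the ordinary cup product.

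\textbf{Pairs $(1,\tau)$ and $(\tau,1)$.} Here $W=L$ and $m=\mathrm{id}$, so \eqref{eq:CY2-mixed-product} gives restriction followed by wedge. The ordering of factors is kept as written, since we use the ordinary-form sign convention.

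\textbf{Pair $(\tau,\tau)$.} Here $W=L\cap L=L$, the output label is $\tau^2=1$, and $K=0$. The product is therefore the Gysin map $(i)_*(\eta\wedge\xi)$. Because distinct components of $L$ are disjoint, the classes $\eta_j\wedge\xi_k$ vanish for $j\neq k$ on the common locus, which gives the diagonal formula.

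The orientation of each $L_j$ is the $\operatorname{Re}\Omega$ orientation. The Gysin normal convention is the one fixed in Lemma~\ref{lem:CY2-sewing}.

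We then turn to \eqref{eq:reflection-fourth-power}, in order.
\begin{itemize}[label=\textendash]
\item $e_j^2=(i_j)_*1=P_j$ is immediate from the $(\tau,\tau)$ formula.
\item $e_jt_j=(i_j)_*t_j$ is the pushforward of the normalized top class, which is the point class $q$.
\item For $P_je_j$, use the mixed formula to get $i_j^*P_j\wedge 1$. By the self-intersection formula, $i_j^*(i_j)_*1=e(N_{L_j/X})$. The paper has already computed $e(N_{L_j/X})=-e(TL_j)$, using the permutation sign for the $\operatorname{Re}\Omega$ orientation versus the ambient complex orientation. Hence $i_j^*P_j=-\chi(L_j)t_j$.
\item $e_j^4$ can be bracketed as $(e_j^2)(e_j^2)=P_j\smile P_j$. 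Alternatively, associativity lets us compute $((e_j^2)e_j)e_j=-\chi(L_j)t_je_j=-\chi(L_j)q$. Both give $P_j^2=[L_j]^2q=-\chi(L_j)q$.
\end{itemize}

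The main obstacle is the sign in $e(N_{L_j/X})=-e(TL_j)$. It depends on identifying the Gysin normal orientation for the $J_\Omega$-rotated structure with ambient-minus-$\operatorname{Re}\Omega$ orientation. I would verify this in a local frame where $\Omega=dz_1\wedge dz_2$ and $L=\R^2$, checking the permutation sign $(v_1,Jv_1,v_2,Jv_2)\mapsto(v_1,v_2,Jv_1,Jv_2)$. Everything else is bookkeeping.
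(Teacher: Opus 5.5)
Your proposal is correct and follows essentially the same route as the paper's proof. You specialize Theorem~\ref{thm:CY2-realization} to $\widehat G=\{1,\tau\}$, use that $\tau$ acts trivially on each $H^*(L_j)$, read off the restriction and self-Gysin products from the rank-zero obstruction bundles of Lemma~\ref{lem:CY2-sewing}, and obtain the final identities from $\int_X(i_j)_*t_j=1$ together with the self-intersection formula $i_j^*P_j=e(N_{L_j/X})=-e(TL_j)=-\chi(L_j)t_j$; your additional bracketing check $((e_j^2)e_j)e_j=-\chi(L_j)q$ is only a consistency confirmation of the same computation.
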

\begin{proof}
The involution restricts to the identity on each fixed component, so its ordinary conjugation invariants on $H^*(L_j)$ are all of that cohomology. Distinct components are disjoint. All obstruction bundles involving a reflection label have rank zero by Lemma~\ref{lem:CY2-sewing}; the remaining restriction and self-Gysin maps give \eqref{eq:one-reflection-product}. These maps preserve the specified orientations and give invariant ambient output. Theorem~\ref{thm:CY2-realization} gives associativity.

The first two identities in \eqref{eq:reflection-fourth-power} follow from the self-Gysin formula and $\int_X(i_j)_*t_j=1$. The self-intersection formula is
\[
 i_j^*P_j=e(N_{L_j/X})=-e(TL_j)=-\chi(L_j)t_j
\]
as an identity on the underlying surface, with the normal-orientation convention established after Lemma~\ref{lem:CY2-sewing}. Restriction gives $P_je_j$, and integration of $P_j^2$ gives the final identity. Here $t_j$ denotes either the normalized surface class or its copy in the shifted sector, according to the displayed target. This computation uses the actual normal Euler class and does not cancel a stable trivial summand.
\end{proof}

\subsection{Four reflection tori on a square abelian surface}\label{subsec:square-tori}

Real structures on abelian varieties belong to the classical Comessatti theory; see Silhol \cite{SilholComessatti}. The following explicit fixed-set and ring calculations are applications of the surface product, not a new classification of such real structures.

Put $E_i=\C/(\Z+i\Z)$, $X=E_i\times E_i$, and
\[
 \Omega=dz_1\wedge dz_2,\qquad
 \tau(z_1,z_2)=(\overline z_1,\overline z_2).
\]
Use the flat K\"ahler metric. With $z_k=x_k+i y_k$, the fixed locus has four connected components
\begin{equation}\label{eq:square-fixed-tori}
 L_\epsilon=\{y_1=\epsilon_1/2,\ y_2=\epsilon_2/2\},
 \qquad \epsilon=(\epsilon_1,\epsilon_2)\in\{0,1\}^2.
\end{equation}
Each is oriented by $dx_1\wedge dx_2=\operatorname{Re}\Omega|_{L_\epsilon}$. For simplicity, write $x_k,y_k$ also for the integral degree-one cohomology classes, and put
\[
 q=x_1y_1x_2y_2,\qquad P=-y_1y_2.
\]
The sign is fixed by $\int_Xx_1x_2P=1$. All four tori are homologous and have Poincar\'e dual $P$.

\begin{theorem}[A $24$-dimensional square-torus algebra]\label{thm:square-torus}
For this action the invariant ambient algebra is
\begin{equation}\label{eq:square-ambient}
 R_0=\Lambda_\R(x_1,x_2)\otimes\R[P]/(P^2),
 \qquad |x_1|=|x_2|=1,\quad |P|=2,
\end{equation}
and the full algebra has the vector-space decomposition
\begin{equation}\label{eq:square-ring}
 \mathcal B=R_0\oplus
 \bigoplus_{\epsilon\in\{0,1\}^2}\Lambda_\R(x_1,x_2)e_\epsilon.
\end{equation}
Here $|e_\epsilon|=1$. Besides the relations in $R_0$, its presentation is
\begin{equation}\label{eq:square-presentation}
 e_\epsilon x_k=x_ke_\epsilon,\qquad
 Pe_\epsilon=e_\epsilon P=0,\qquad
 e_\epsilon e_{\epsilon'}=\delta_{\epsilon,\epsilon'}P.
\end{equation}
Its shifted Poincar\'e polynomial and dimension are
\[
 P_{\mathcal B}(t)=1+6t+10t^2+6t^3+t^4,\qquad
 \dim_\R\mathcal B=24.
\]
The trace $\operatorname{Tr}(q)=1$ defines a nondegenerate Frobenius pairing.
\end{theorem}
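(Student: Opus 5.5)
The plan is to specialize Proposition~\ref{prop:one-reflection} with $G=\{1\}$, $\widehat G=\langle\tau\rangle$, and $m=4$ components $L_\epsilon$. First I will check the hypotheses of Theorem~\ref{thm:CY2-realization}. The flat metric is K\"ahler, and $\tau$ is anti-holomorphic and anti-symplectic. Moreover $\tau^*(dz_1\wedge dz_2)=d\overline z_1\wedge d\overline z_2=\overline\Omega$. The condition $\tau g\tau=g^{-1}$ is vacuous.

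\emph{Invariant ambient part.} Next I compute $R_0=H^*(X;\R)^\tau$. On $H^1$ we have $\tau^*x_k=x_k$ and $\tau^*y_k=-y_k$. Hence the invariants of $\Lambda(x_1,y_1,x_2,y_2)$ are spanned by monomials with an even number of $y$'s. These are $\Lambda(x_1,x_2)$ tensored with $\{1,y_1y_2\}$, and $y_1y_2=-P$ with $P^2=0$. This gives \eqref{eq:square-ambient}. Its Poincar\'e polynomial is $(1+t)^2(1+t^2)=1+2t+2t^2+2t^3+t^4$.

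\emph{Reflection sectors.} Each $L_\epsilon$ is a real $2$-torus. The restriction $i_\epsilon^*$ sends $x_k\mapsto dx_k$ and $y_k\mapsto0$. It is therefore an isomorphism $\Lambda(x_1,x_2)\to H^*(L_\epsilon)$, and it is compatible with the orientation, since $\int_{L_\epsilon}x_1x_2=1$ by the choice of the $\operatorname{Re}\Omega$ orientation. By \eqref{eq:one-reflection-product}, $a\star e_\epsilon=i_\epsilon^*a$ and $e_\epsilon\star a=i_\epsilon^*a$. This identifies the sector with $\Lambda(x_1,x_2)e_\epsilon$ and gives $e_\epsilon x_k=x_ke_\epsilon$. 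Since $i_\epsilon^*P=0$, we get $Pe_\epsilon=e_\epsilon P=0$. Each sector contributes $t(1+t)^2$. Summing gives $1+2t+2t^2+2t^3+t^4+4(t+2t^2+t^3)=1+6t+10t^2+6t^3+t^4$, and total dimension $8+16=24$.

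\emph{Products of reflection classes.} Distinct components are disjoint, so $e_\epsilon e_{\epsilon'}=0$ for $\epsilon\neq\epsilon'$. The self-product $e_\epsilon^2=(i_\epsilon)_*1$ is the Poincar\'e dual of $L_\epsilon$. I will verify that this dual is $P$ using the adjunction $\int_X\beta\wedge(i_\epsilon)_*1=\int_{L_\epsilon}i_\epsilon^*\beta$. Testing against $\beta=x_1x_2$ gives $1$, and all other degree-two monomials restrict to $0$. This is consistent with the normalization $\int_Xx_1x_2P=1$. It also agrees with \eqref{eq:reflection-fourth-power}, since $P^2=0=-\chi(T^2)q$. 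General products then follow from the generators. The relation $(\alpha e_\epsilon)(\beta e_\epsilon)=(i_\epsilon)_*(\alpha\beta)=\alpha\beta P$ follows from the projection formula. Associativity is inherited from Theorem~\ref{thm:CY2-realization}, so checking the presentation on generators suffices.

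\emph{Frobenius pairing.} For nondegeneracy I use Corollary~\ref{cor:CY2-Frobenius}, or directly the block decomposition. On $R_0$ the trace pairing is Poincar\'e duality restricted to invariants, which is nondegenerate because $\tau$ preserves orientation. Each sector pairs with itself via $\int_{L_\epsilon}\alpha\wedge\beta$, which is Poincar\'e duality on the torus. Sectors with different labels pair to zero.

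The main obstacle is the orientation and sign bookkeeping: confirming that $i_\epsilon^*(x_1x_2)$ is the positive generator for the $\operatorname{Re}\Omega$ orientation, and that $(i_\epsilon)_*1=+P$ rather than $-P$ under the paper's convention $q=x_1y_1x_2y_2$. I would settle this first by an explicit computation of $\int_X x_1x_2\wedge(-y_1y_2)$, using that reordering $x_1x_2y_1y_2$ to $x_1y_1x_2y_2$ costs one transposition.
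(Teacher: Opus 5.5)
Your proposal is correct and follows essentially the same route as the paper. Like the paper, you specialize Proposition~\ref{prop:one-reflection} to the four disjoint tori, read off the invariants from the sign action on $x_k,y_k$, and use that restriction kills the $y$-classes while the self-Gysin map is $\eta\mapsto\eta P$ by the projection formula. You then take associativity and nondegeneracy from Theorem~\ref{thm:CY2-realization} and Corollary~\ref{cor:CY2-Frobenius}, and your explicit check that $(i_\epsilon)_*1=+P$ via $\int_X x_1x_2P=1$ is exactly the normalization the paper fixes just before the theorem.
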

\begin{proof}
Conjugation fixes $x_1,x_2$ and negates $y_1,y_2$. An ambient exterior monomial is invariant exactly when it contains an even number of the $y$-classes, proving \eqref{eq:square-ambient}. Its graded dimensions are $(1,2,2,2,1)$. Restriction to every $L_\epsilon$ kills the $y$-classes and retains $x_1,x_2$. The self-Gysin map sends $1$ to $P$ and, by the projection formula, sends $\eta$ to $\eta P$ for $\eta\in\Lambda(x_1,x_2)$. Formula \eqref{eq:one-reflection-product} now gives precisely \eqref{eq:square-presentation}. It also proves that these relations specify the entire product, rather than just a list of selected structure constants.

Each of the four shifted tori contributes $t+2t^2+t^3$. This proves the dimension and Poincar\'e polynomial. Associativity and nondegeneracy follow from Proposition~\ref{prop:one-reflection} and Corollary~\ref{cor:CY2-Frobenius}, or from the explicitly identified restriction and Gysin maps.
\end{proof}

Writing $a_\epsilon=x_1e_\epsilon$, $b_\epsilon=x_2e_\epsilon$ and $t_\epsilon=x_1x_2e_\epsilon$, some useful checks are
\[
 e_\epsilon^2=P\ne0,\qquad e_\epsilon^4=0,\qquad
 e_\epsilon t_\epsilon=q,\qquad
 a_\epsilon b_\epsilon=q,\quad b_\epsilon a_\epsilon=-q.
\]
All products between classes supported on different reflection components are zero. In particular, a nonzero self-Gysin class $P$ does not require a nonzero self-intersection number: here $e(N_{L_\epsilon/X})=0$ and $P^2=0$. The commutation rule in \eqref{eq:square-presentation} follows from the ordinary degree zero of $e_\epsilon$, not from its shifted degree one.

\subsection{A free affine real structure with the same volume phase}\label{subsec:free-affine}
On the same square abelian surface define
\begin{equation}\label{eq:free-affine}
 \tau_f(z_1,z_2)=(\overline z_1+\tfrac12,\overline z_2).
\end{equation}
Its square is translation by $(1,0)$ and therefore is the identity on $X$. It is anti-holomorphic, anti-symplectic and satisfies $\tau_f^*\Omega=\overline\Omega$. Nevertheless it has no fixed point: the first fixed-point equation would imply
\[
 2i y_1=\tfrac12+m+in\quad\text{for some }m,n\in\Z,
\]
whose real part is impossible. Since translation is isotopic to the identity, $\tau_f$ and $\tau$ induce the same action on ambient cohomology. Thus
\begin{equation}\label{eq:free-affine-ring}
 \mathcal B_f=R_0,\qquad
 P_{\mathcal B_f}(t)=1+2t+2t^2+2t^3+t^4,\qquad
 \dim_\R\mathcal B_f=8.
\end{equation}
There are no reflection states and no nonidentity fixed sectors. Equivalently, after rotation this is the ordinary real cohomology ring of the free quotient. The phase obstruction vanishes for both \eqref{eq:square-fixed-tori} and \eqref{eq:free-affine}; its vanishing says nothing about nonemptiness of the fixed locus. These two actions also show that identical actions on ambient cohomology and on the volume line need not give the same full sector algebra.

There is a distinction from an origin-preserving real abelian variety: $\tau_f(0)=(\tfrac12,0)\ne0$. Thus $\tau_f$ is an affine real structure on the underlying complex algebraic surface, not a real group structure for the specified origin. Since it has no fixed point, no change of origin makes that origin fixed. Origin-preserving classification results such as those discussed in \cite{SilholComessatti} must not be applied without this qualification.

\subsection{Three real quartic K3 models}\label{subsec:quartic-k3}

The next examples leave the torus setting and include a nonzero normal Euler class. We use standard K3 topology and the real-locus orientation convention; see \cite{HuybrechtsK3,ItenbergMikhalkin}. The general theory of real structures and integral quadratic forms is much richer \cite{NikulinReal}. Nikulin--Saito \cite{NikulinSaito} study real K3 surfaces equipped additionally with a commuting holomorphic non-symplectic involution, with hypotheses for their moduli classification. Those additional data are not assumed in our single-reflection calculation. No integral-lattice classification or new classification of real quartics is claimed below.

\subsubsection{The cohomology and product for one connected real component}
Let $X$ be a K3 surface with anti-holomorphic involution $\tau$. Choose an anti-invariant K\"ahler form and normalize the holomorphic two-form so that $\tau^*\Omega=\overline\Omega$. Such choices satisfy the single-reflection surface hypotheses. Set
\[
 V=H^2(X;\R)^\tau,\qquad r=\dim V,\qquad
 Q(v,w)=\int_X v\smile w.
\]
The real K3 facts used here are $b_2(X)=22$, $H^1(X)=H^3(X)=0$, and signature $(3,19)$ of the full intersection form \cite{HuybrechtsK3}. The finite-order Lefschetz formula gives
\begin{equation}\label{eq:k3-invariant-rank}
 \chi(X^\tau)=2+(r-(22-r))=2r-20,
 \qquad r=10+\tfrac12\chi(X^\tau).
\end{equation}
Indeed, $\tau$ preserves the ambient real orientation, and the normal action at its fixed surface is $-1$, so the fixed contribution is its Euler characteristic. The invariant and anti-invariant subspaces are $Q$-orthogonal. The positive classes $\operatorname{Re}\Omega$, $\operatorname{Im}\Omega$, and a K\"ahler class have respectively invariant, anti-invariant, and anti-invariant parity. Hence $Q|_V$ is nondegenerate of signature $(1,r-1)$; compare \cite[Proposition~2.1]{ItenbergMikhalkin}.

For an arbitrary real locus $L$, the shifted dimensions consequently satisfy
\begin{equation}\label{eq:k3-poincare}
 P_{\mathcal B}(t)=1+b_0(L)t+(r+b_1(L))t^2+b_0(L)t^3+t^4.
\end{equation}
In particular an empty real locus gives $r=10$ and a $12$-dimensional algebra consisting only of the invariant ambient cohomology.

\begin{proposition}[A real normal form for connected K3 reflection loci]\label{prop:k3-normalform}
Suppose $L=X^\tau$ is connected of genus $g$, and put $P=i_*1\in V$. Let $e,t$ be the shifted copies of $1\in H^0(L)$ and the class of integral one in $H^2(L)$. Choose a symplectic basis of $H^1(L)$, and denote its shifted copies by $A_\ell,B_\ell$ for $1\leq\ell\leq g$. The state space is
\[
 \R1\oplus V\oplus\R q\oplus
 \R e\oplus\R t\oplus
 \bigoplus_{\ell=1}^g(\R A_\ell\oplus\R B_\ell),
\]
with shifted degrees $0,2,4,1,3,2$, respectively. The complete nonunit products are
\begin{align}
 vw&=Q(v,w)q,&e^2&=P,&ve=ev&=Q(v,P)t,\label{eq:k3-ring}\\
 et=te&=q,&A_\ell B_m&=\delta_{\ell m}q,&B_mA_\ell&=-\delta_{\ell m}q.\nonumber
\end{align}
Every nonunit product not specified by these formulas is zero. Moreover
\begin{equation}\label{eq:k3-P-square}
 P\ne0,\qquad Q(P,P)=2g-2,\qquad e^4=(2g-2)q.
\end{equation}
These are presentations over $\R$; a real normal-form basis below is not asserted to be an integral K3 lattice basis.
\end{proposition}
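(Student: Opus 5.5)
The plan is to specialize Proposition~\ref{prop:one-reflection} to $m=1$ and then compute the ambient invariant part using the K3 facts recorded above. The state space follows from \eqref{eq:one-reflection-space}. Since $H^1(X)=H^3(X)=0$, the invariant ambient cohomology is $\R1\oplus V\oplus\R q$; here $q$ is invariant because $\tau$ preserves the ambient orientation in complex dimension two. The reflection sector is $H^*(L)$ shifted by one, with basis $e,\{A_\ell,B_\ell\},t$ in shifted degrees $1,2,3$. This gives the stated decomposition and degrees.

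The products come directly from \eqref{eq:one-reflection-product}:
\begin{enumerate}[label=\textup{(\arabic*)}]
\item For ambient classes $v,w\in V$, the product is the cup product, which lands in $H^4$ and equals $Q(v,w)q$.
\item For $v\in V$, the mixed product is $ve=i^*v$, viewed in $H^2(L)$. By the Gysin adjunction, $\int_L i^*v=\int_X v\smile i_*1=Q(v,P)$, so $i^*v=Q(v,P)\,t$.
\item On the reflection sector, any $v\in V$ times $A_\ell$, $B_\ell$ or $t$ is zero, because $i^*v\wedge\eta$ has form degree above two on $L$.
\item The self-Gysin products are
\[
 e^2=i_*1=P,\qquad et=te=i_*t=q,
\]
\[
 A_\ell B_m=i_*(a_\ell\wedge b_m)=\delta_{\ell m}q,\qquad B_mA_\ell=-\delta_{\ell m}q.
\]
These use $\int_X i_*t=1$ and the symplectic-basis normalization.
\item Every other product vanishes. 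Products of $e$ with $A_\ell$ or $B_\ell$ land in $i_*H^1(L)\subset H^3(X)=0$, and any remaining product exceeds the available form degree.
\end{enumerate}
Associativity and completeness of the list are inherited from Proposition~\ref{prop:one-reflection}.

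For \eqref{eq:k3-P-square}, the key input is the self-intersection identity of Proposition~\ref{prop:one-reflection}:
\[
 Q(P,P)=\int_L i^*P=-\chi(L)=2g-2,
\]
which relies on the normal-orientation convention $e(N)=-e(TL)$ established after Lemma~\ref{lem:CY2-sewing}. It follows that $e^4=P^2=(2g-2)q$.

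The main obstacle is showing $P\ne0$ when $g=1$, since then $Q(P,P)=0$ and the self-intersection gives no information. I would first try pairing against an invariant class. Since $\operatorname{Re}\Omega$ is $\tau$-invariant and $\operatorname{Re}\Omega|_L>0$ is the chosen orientation, the adjunction gives
\[
 Q([\operatorname{Re}\Omega],P)=\int_L\operatorname{Re}\Omega>0,
\]
so $P\ne0$ for every genus. Finally, I would note that the basis is only a real normal form (symplectic over $\R$) and make no integral lattice claim.
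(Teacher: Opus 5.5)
Your proposal is correct and follows the paper's proof essentially step for step: you specialize Proposition~\ref{prop:one-reflection}, use $H^1(X)=H^3(X)=0$ and the Gysin adjunction $\int_L i^*v=Q(v,P)$, take $Q(P,P)=-\chi(L)$ from the normal self-intersection, and detect $P\ne0$ (including $g=1$) by pairing with $[\operatorname{Re}\Omega]$. One small imprecision: $A_\ell A_m$ and $B_\ell B_m$ do not vanish for form-degree reasons; they vanish because $\int_L a_\ell\wedge a_m=\int_L b_\ell\wedge b_m=0$ in a symplectic basis.
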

\begin{proof}
The pairing $Q$ specifies the ambient ring because there is no odd ambient cohomology. For $v\in V$, the integral of $i^*v$ is $Q(v,P)$, which determines this degree-two restriction on the connected surface. The Gysin map on $H^1(L)$ is zero because its target is $H^3(X)=0$; the Gysin map on $H^2(L)$ is integration times $q$. Proposition~\ref{prop:one-reflection} gives every formula in \eqref{eq:k3-ring}, including the displayed zero products and the signs on $H^1(L)$. It also gives $Q(P,P)=-\chi(L)=2g-2$. Finally
\[
 Q([\operatorname{Re}\Omega],P)=\int_L\operatorname{Re}\Omega>0,
\]
so $P$ is nonzero, including when $g=1$. This is the usual real-locus area orientation \cite{ItenbergMikhalkin}. The fourth-power identity follows from the actual normal self-intersection.
\end{proof}

\subsubsection{Explicit signed quartics and their fixed sets}
Consider the following smooth quartics, each with standard conjugation:
\begin{align}
 Y_S&=\{z_0^4-z_1^4-z_2^4-z_3^4=0\}\subset\mathbb{CP}^3,\label{eq:quartic-sphere}\\
 Y_T&=\{z_0^4+z_1^4-z_2^4-z_3^4=0\}\subset\mathbb{CP}^3,\label{eq:quartic-torus}\\
 Y_\varnothing&=\{z_0^4+z_1^4+z_2^4+z_3^4=0\}\subset\mathbb{CP}^3.\label{eq:quartic-empty}
\end{align}
Smoothness follows since their four partial derivatives vanish simultaneously only at the excluded origin. Adjunction and the Lefschetz hyperplane theorem show that each is a K3 surface. For completeness, their total Chern class is
\[
 c(TY)=\frac{(1+H)^4}{1+4H}=1+6H^2,
 \qquad \int_YH^2=4,
\]
so $\chi(Y)=24$ and $b_2(Y)=22$. The restricted Fubini--Study form is anti-invariant under conjugation, and a residue holomorphic two-form can be normalized to obey $\tau^*\Omega=\overline\Omega$.

These three surfaces are isomorphic over $\C$: diagonal changes of coordinates using a fourth root of $-1$ change the indicated coefficient signs. Their real structures are different. Their real loci can be determined without a real-quartic classification. For $Y_S$, the real coordinate $z_0$ cannot vanish. In the affine chart $z_0=1$, the fixed locus is
\[
 x_1^4+x_2^4+x_3^4=1,
\]
a smooth radial graph over $S^2$, and hence a sphere. For $Y_T$, write the two coordinate pairs as $u=(z_0,z_1)$ and $v=(z_2,z_3)$. A real projective solution can be normalized to
\[
 u_0^4+u_1^4=v_0^4+v_1^4=1.
\]
Each factor is a smooth circle, with the remaining identification $(u,v)\sim(-u,-v)$. The quotient is a torus, since this simultaneous antipodal action is translation by a half-period on $S^1\times S^1$. For $Y_\varnothing$, a sum of four real fourth powers cannot vanish at a projective point, so the fixed locus is empty.

\begin{example}[A spherical component with nonzero Euler correction]\label{ex:k3-sphere}
For $Y_S$, equations \eqref{eq:k3-invariant-rank}--\eqref{eq:k3-poincare} give
\[
 r=11,\qquad P_{\mathcal B_S}(t)=1+t+11t^2+t^3+t^4,
 \qquad \dim\mathcal B_S=15.
\]
One may choose a real basis $P,h,n_1,\ldots,n_9$ of $V$ with orthogonal form
\[
 Q=\operatorname{diag}(-2,1,-1,\ldots,-1).
\]
Together with \eqref{eq:k3-ring}, this is a complete real algebra presentation. In particular,
\begin{equation}\label{eq:k3-spherical-products}
 e^2=P,\qquad Pe=-2t,\qquad P^2=-2q,\qquad et=q,\qquad e^4=-2q.
\end{equation}
The normal Euler class on the sphere is $-2$ times its normalized top class and is nonzero. Indeed the underlying normal bundle is isomorphic to $TS^2$, so it is stably trivial:
\[
 N_{L/Y_S}\oplus\underline\R\cong_{\mathrm{or}}\underline\R^3,
 \qquad e(N_{L/Y_S})=-2u\ne0.
\]
The chosen normal orientation accounts for the minus sign; stabilization can be oriented compatibly by an oriented trivialization. Thus the vanishing of the binary orbifold obstruction bundle does \emph{not} make every excess/self-intersection correction vanish. This example tests precisely the ordinary Euler information that cannot be read from stable real bundle data alone.
\end{example}

\begin{example}[A torus component on a K3 surface]\label{ex:k3-torus}
For $Y_T$ the invariant ambient rank is $r=10$, and
\[
 P_{\mathcal B_T}(t)=1+t+12t^2+t^3+t^4,\qquad
 \dim\mathcal B_T=16.
\]
Here $P\ne0$ is isotropic. Choose $D\in V$ with $Q(P,D)=1$ and $Q(D,D)=0$, and complete $P,D$ to a basis by eight mutually orthogonal vectors of square $-1$. Such a choice follows from signature $(1,9)$: after first choosing $Q(P,D)=1$, replace $D$ by $D-\tfrac12Q(D,D)P$. The complete real presentation is \eqref{eq:k3-ring} for this hyperbolic plane and negative complement, with one pair $A,B$. It includes
\begin{equation}\label{eq:k3-torus-products}
 e^2=P\ne0,\quad P^2=Pe=0,\quad De=t,\quad
 et=q,\quad AB=q,\quad BA=-q,\quad e^4=0.
\end{equation}
\end{example}

\begin{example}[A K3 surface without real points]\label{ex:k3-empty}
The specific quartic $Y_\varnothing$ already appears as an example without real points in \cite[Introduction]{YoshikawaRealK3}; we use that known real surface to illustrate the present ring formulas. For $Y_\varnothing$, there are no reflection states. The invariant ambient rank is $10$, so the algebra has dimension $12$ and Poincar\'e polynomial $1+10t^2+t^4$. It is specified by $\R1\oplus V\oplus\R q$ with $vw=Q(v,w)q$ and $Q$ of signature $(1,9)$. As with the free affine torus, the phase obstruction vanishes although the reflection sector is empty.
\end{example}

\paragraph{The free quotient and the Enriques interpretation.}
Let $M=Y_\varnothing/\langle\tau\rangle$ and let $\pi:Y_\varnothing\to M$ be the two-sheeted covering. Since the action is free, transfer gives the ordinary real algebra identification
\[
 \pi^*:H^*(M;\R)\stackrel{\cong}{\longrightarrow}
 H^*(Y_\varnothing;\R)^\tau=\mathcal B.
\]
Using compatible invariant Ricci-flat K\"ahler data, Donaldson's trick makes $\tau$ holomorphic for an integrable rotated structure; its quotient is an Enriques surface \cite[Sections~3--4]{YoshikawaRealK3}. This is an additional interpretation with those metric choices, not a claim that every $J_\Omega$ in Lemma~\ref{lem:CY2-sewing} is integrable. The ring identification by transfer does not require integrability. For a top-degree class $a$ on $M$, the manuscript's trace satisfies $\operatorname{Tr}(\pi^*a)=2\int_M a$; the quotient-stack trace, scaled by $1/2$, equals quotient integration.

\begin{corollary}[Equal graded dimensions do not determine the algebra]\label{cor:two-sixteen-dimensional}
The algebra of $Y_T$ and the Eisenstein algebra of Theorem~\ref{thm:eisenstein-algebra} have the same shifted Poincar\'e polynomial $1+t+12t^2+t^3+t^4$, but are not isomorphic as graded real algebras.
\end{corollary}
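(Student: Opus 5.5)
The proof has two parts: first I match the graded dimensions, then I separate the algebras with an isomorphism invariant that depends only on the product and the grading. By Theorem~\ref{thm:eisenstein-algebra}, the Eisenstein algebra has basis $1,\Theta,q,U_p,E,A,B,T$ with shifted degrees $0,2,4,2,1,2,2,3$. Counting gives one class in degree $0$, one in degree $1$, $1+9+2=12$ in degree $2$, one in degree $3$ and one in degree $4$. Example~\ref{ex:k3-torus} gives the same polynomial $1+t+12t^2+t^3+t^4$ for $\mathcal B_T$, directly from \eqref{eq:k3-poincare} with $r=10$ and $b_0(L)=1$, $b_1(L)=2$. This settles the first assertion.

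For non-isomorphism, the key point is that in both algebras the degree-one component is a line. Let $\phi:\mathcal B_T\to\mathcal B_{\mathrm{Eis}}$ be a hypothetical isomorphism of graded real algebras. It must send the degree-one generator $e$ of $\mathcal B_T$ to $\lambda E$ for some real $\lambda\neq0$. Multiplicativity then gives
\[
 \phi(e^4)=\lambda^4E^4 .
\]
By \eqref{eq:k3-torus-products} we have $e^4=0$ in $\mathcal B_T$, so the left side vanishes. On the other side, $E^4=216q\neq0$, computed from \eqref{eq:EA3}--\eqref{eq:EA6}. Since $\lambda^4\neq0$, this is a contradiction. Equivalently, the vanishing of the fourth power of a nonzero degree-one element is independent of the choice of generator, so it is an isomorphism invariant.

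The only step needing care is confirming $E^4=216q$ from the table. I would compute it bracketed as $E^2\cdot E^2$:
\[
 9(W-\Theta)^2=9\bigl(W^2-W\Theta-\Theta W+\Theta^2\bigr).
\]
We have $W^2=\sum_p U_p^2=18q$, since $U_pU_{p'}=2\delta_{p,p'}q$ and there are nine points. Also $\Theta U_p=U_p\Theta=0$ by \eqref{eq:EAzero2}, and $\Theta^2=6q$. Hence $E^4=9\cdot24\,q=216q$. As a cross-check, the bracketing $E\cdot E\cdot E^2$ uses $3E(W-\Theta)=3(18T+6T)=72T$ and then $E\cdot72T=216q$, which agrees, consistent with associativity. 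On the torus side, $e^4=P^2=Q(P,P)q=0$ because $g=1$, by \eqref{eq:k3-P-square}.
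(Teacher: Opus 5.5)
Your proof is correct and is the paper's argument: in both algebras the degree-one space is a line, so a graded isomorphism sends $e$ to a nonzero real multiple of $E$, and $e^4=0$ against $E^4=216q\neq0$ gives the contradiction. Your checks of the two Poincar\'e polynomials and of $E^4=216q$ in both bracketings fill in computations the paper records just before the corollary. One small citation point: $U_p\Theta=0$ is not literally listed in \eqref{eq:EAzero2}, but it holds for the same point-restriction reason as $\Theta U_p=0$.
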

\begin{proof}
The degree-one space is one-dimensional in each algebra. On $Y_T$ it is generated by $e$ with $e^4=0$. In the Eisenstein algebra it is generated by $E$ with $E^4=216q\ne0$. A graded isomorphism would send $e$ to a nonzero real multiple of $E$, contradicting the fourth-power relation. This distinction is independent of trace normalization.
\end{proof}

The five additions are summarized below. Each uses the full ordinary conjugation-invariant surface algebra, not the closed-sector anti-linear real form of Theorem~\ref{thm:closed-real}.
\begin{center}
\small
\begin{tabular}{lccc}
Example & reflection locus & shifted dimensions $(b_0,\ldots,b_4)$ & dimension\\ \hline
Square conjugation & $4T^2$ & $(1,6,10,6,1)$ & $24$\\
Free affine square action & $\varnothing$ & $(1,2,2,2,1)$ & $8$\\
Signed quartic $Y_S$ & $S^2$ & $(1,1,11,1,1)$ & $15$\\
Signed quartic $Y_T$ & $T^2$ & $(1,1,12,1,1)$ & $16$\\
Definite quartic $Y_\varnothing$ & $\varnothing$ & $(1,0,10,0,1)$ & $12$
\end{tabular}
\end{center}
These examples vary the reflection geometry and the resulting product, rather than merely the notation for the original dihedral example. They do not enlarge the surface theorem to arbitrary complex dimension, prove an integral lattice classification, or assert priority for the underlying real forms.

\section{Comparison and further directions}\label{sec:comparison}

\subsection{Unoriented field theories and the scope of the construction}
The organization of group-labelled sectors into Frobenius structures, followed by passage to invariants, predates the present construction; see Kaufmann \cite{KaufmannFrobenius}. For unoriented theories, Turaev and Turner relate two-dimensional unoriented topological quantum field theories to extended Frobenius algebras \cite{TuraevTurner}. Tagami treats unoriented homotopy quantum field theories with target $K(\pi,1)$ when every element of $\pi$ has square one, using extended crossed group algebras \cite{Tagami}. Equivariant and orientation-twisted constructions are developed by Sweet \cite{Sweet} and Young \cite{YoungDW}.

The systematic study of extended Frobenius structures in \cite{CKQWExtended} makes the extra algebraic data explicit: an involution of the Frobenius algebra and a distinguished element subject to compatibility identities. Neither follows merely from a nondegenerate invariant pairing. Applying an ungraded extension theorem would also require a comparison with the sector grading and sign conventions used here.

Our construction specifies sector cohomology and binary products; it does not supply all unoriented cobordism operations or establish an equivalence with one of these theories. Such a comparison would require the state spaces, coefficient systems, crosscap operations and sewing axioms to be matched. In particular, no disjointness or novelty conclusion relative to those theories is inferred solely from the presence of antiholomorphic fixed loci.

The group labels here include the fixed loci of anti-complex involutions. The relation $\tau g\tau=g^{-1}$ need not define a commuting direct-product action. This explains the usefulness of the graded-group notation, but is not by itself evidence for a new cohomology theory outside known orbifold constructions.

\subsection{The Chen--Hu real form and the rotated surface ring}
There are two distinct algebraic operations. Theorem~\ref{thm:closed-real} constructs a phase-corrected anti-linear real form of the $G$-invariant even Chen--Ruan algebra. Theorem~\ref{thm:CY2-realization} instead takes ordinary conjugation invariants of the full group-labelled surface algebra. Its even invariant part need not be the real form of the first theorem. Keeping these distinctions avoids conflating label reversal with conjugation descent.

For the effective phase-trivial surface action, the full invariant product is an ordinary Chen--Ruan product for $(X,J_\Omega,\widehat G)$. Its existence and associativity therefore follow from established orbifold theory \cite{ChenRuan,FantechiGottsche}. The geometric fact that an antiholomorphic involution can become holomorphic after rotation is also established in the hyper-K\"ahler setting \cite{BiswasWilkin}. The explicit pointwise construction and obstruction-rank calculation in Lemma~\ref{lem:CY2-sewing} explain why the formulas proposed here have precisely that interpretation.

On a nonabelian group-labelled inertia, exchanging inputs is naturally a braided operation rather than the naive interchange of labels. The map $(g,h,x)\mapsto(ghg^{-1},g,x)$ is the one used in \cite[Proposition~3.1.3]{EJKPlethora}. Our formulas first use ordered labels and only then take conjugation invariants.

This identification also explains the shifted parity phenomenon in the Eisenstein algebra. The reflection sectors have age $1/2$ for $J_\Omega$, so their degree shift is odd. Ordinary wedge parity and total shifted parity need not agree. Thus the facts that $E^2\ne0$ with $|E|=1$, and $AB=-BA\ne0$ with $|A|=|B|=2$, do not establish a new source of noncommutativity beyond the age-shift conventions of the rotated orbifold ring.

\subsection{Comparison with virtual intersection products}\label{subsec:virtual-comparison}
Virtual orbifold cohomology \cite{LUXVirtual} uses the Euler class of the excess bundle of two fixed loci as its binary correction. Gonz\'alez, Lupercio, Segovia, Uribe and Xicot\'encatl identify this algebra with the Chen--Ruan cohomology of the cotangent orbifold \cite[Theorem~1.1]{GLSUXCotangent}. It is important to distinguish this cotangent comparison from our surface rotation.

Here is a direct calculation using the definitions in \cite[Section~2]{GLSUXCotangent} and Proposition~\ref{prop:one-reflection}. Work on the \emph{rotated} almost complex surface $(X,J_\Omega)$ with one reflection $\tau$, and fix a connected component $i:L\hookrightarrow X$. For the virtual product its self-pair excess bundle and shift are
\[
 E(X;L,L)=TX|_L/TL=N_{L/X},\qquad
 d^{\mathrm{virt}}_\tau=2\operatorname{codim}_{\C}(L,X)=2.
\]
For the product $\star$ of this paper, the obstruction rank is zero and $d_\tau=2\age_{J_\Omega}(\tau)=1$. Write $\mathbf 1_L^{\mathrm{virt}}$ and $e_L$ for the copies of the ordinary unit of $H^0(L)$ in these differently graded theories. With the same geometric orientations and unreduced convention,
\begin{align}
 \mathbf 1_L^{\mathrm{virt}}\mathbin{\times_{\mathrm{virt}}}
       \mathbf 1_L^{\mathrm{virt}}&=i_*e(N_{L/X}),\label{eq:virtual-reflection-square}\\
 e_L\star e_L&=i_*1=P_L,\qquad
 e_L^{\star4}=P_L^2=i_*e(N_{L/X}).\label{eq:rotated-reflection-comparison}
\end{align}
The last equality is the self-intersection and projection formula, not an identification of the two products. For the spherical K3 example, the virtual square is $-2q$, whereas the present square is $P_L\ne0$ and its fourth power is $-2q$. For the torus examples, the virtual square vanishes while $e_L\star e_L=P_L\ne0$. Thus their shifts and binary corrections differ even on the same fixed-set cohomology. No identification under the natural fixed-locus correspondence is asserted, and no virtual excess factor is inserted into the surface multiplication tables above.

\subsection{What the phase obstruction does and does not imply}
The class $[\mu_\R]$ is the negative Bockstein of the volume-phase cocycle. Its vanishing is equivalent to coherent real lifts and to phase-trivial normalization, and the lift classification is the indicated group-cohomological torsor. These statements follow from the finite-group phase law and averaging, independently of any analytic construction. They do not assert that all higher-dimensional correction bundles or Euler comparisons exist. The no-go theorem, in turn, rules out a conjugacy-invariant single-sector age satisfying the defect rule for all reflection pairs; it does not prohibit every possible grading after additional structure has been chosen.

In particular, the degree distinction from orientifold discrete torsion \cite{SharpeDiscrete} is
\[
 [a]\in H^1(\widehat G;U(1)_\epsilon),\qquad
 -\beta[a]\in H^2(\widehat G;\Z_\epsilon),\qquad
 [\omega_{\mathrm{DT}}]\in H^2(\widehat G;U(1)_\epsilon).
\]
The first two classes here come from the volume line; the last denotes a discrete torsion class, not the K\"ahler form. We do not identify these classes or deduce $B$-field, crosscap, or Real Gromov--Witten operations from volume normalization.

\subsection{Relation to analytic Fredholm sewing}
The analytic study of Real Cauchy--Riemann families, determinant lines and $KO$-valued sewing is a separate problem. The present proofs do not invoke the author's companion manuscript on that subject. A stable family index identity can motivate correction data, but cannot replace the actual Euler class comparison \eqref{eq:euler-sewing}. To feed an analytic construction into Definition~\ref{def:admissible-package}, one must prove that it supplies the required finite-dimensional bundles, coefficient identifications, clean comparison maps and Euler equality, with the stated parity conventions or a separately justified sign extension.

\subsection{Complex stringy K-theory and the Real boundary}\label{subsec:HuWang-comparison}
There is an existing complex $K$-theory comparison for the surface algebra, whereas a Real or twisted $KR$ comparison is a separate question. To make the distinction precise, put
\[
 \mathfrak Y=[(X,J_\Omega)/\widehat G],\qquad
 K^*_{\mathrm{orb}}(\mathfrak Y;\C)
 :=K^*_{\mathrm{orb}}(\mathfrak Y)\otimes_{\Z}\C.
\]
Here $K^*=K^0\oplus K^1$ is ordinary two-periodic complex orbifold $K$-theory. For this finite global quotient its underlying group is $K^*_{\widehat G}(X)\otimes_{\Z}\C$, with complex-linear equivariance on the bundle fibers. In particular, it is the $K$-theory of the base orbifold $\mathfrak Y$, not the full orbifold $K$-theory of its inertia. Write $\circ_{\mathrm{HW}}$ for the product of \cite[Definition~4.3]{HuWang}, and $\widetilde{\operatorname{ch}}_{\mathrm{deloc}}$ for the modified character of \cite[Definition~4.4]{HuWang}. The product is in general not the ordinary tensor product, and the modification of the character is part of the comparison.

\begin{corollary}[Hu--Wang comparison after rotation]\label{cor:HuWang-rotation}
Under the hypotheses of Theorem~\ref{thm:CY2-realization}, composition of its rotation identification with the Hu--Wang character gives an isomorphism of complex algebras
\begin{equation}\label{eq:HuWang-rotation}
 \Psi_\Omega:
 \bigl(K^*_{\mathrm{orb}}(\mathfrak Y;\C),\circ_{\mathrm{HW}}\bigr)
 \xrightarrow{\ \cong\ }
 \bigl(H^*_{\Ori}(X,\widehat G)\otimes_{\R}\C,\star\bigr).
\end{equation}
The two-periodic parity on the right is ordinary fixed-sector cohomological parity, before the age shifts. No preservation of the full shifted integer grading is asserted.
\end{corollary}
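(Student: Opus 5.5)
The isomorphism $\Psi_\Omega$ will be built as a composite of two maps already available, and the work is to check that each is an isomorphism of complex algebras. The first factor is the Hu--Wang comparison \cite[Theorem~4.5]{HuWang}. Applied to the almost complex orbifold $\mathfrak Y=[(X,J_\Omega)/\widehat G]$, it says that the modified delocalized character
\[
 \widetilde{\operatorname{ch}}_{\mathrm{deloc}}:
 \bigl(K^*_{\mathrm{orb}}(\mathfrak Y;\C),\circ_{\mathrm{HW}}\bigr)\longrightarrow
 \bigl(H^*_{\CR}(\mathfrak Y;\C),\cup_{\CR}\bigr)
\]
is an isomorphism of complex algebras. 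The second factor is the complexification of the identification in Theorem~\ref{thm:CY2-realization},
\[
 H^*_{\CR}(\mathfrak Y;\R)\cong H^*_{\Ori}(X,\widehat G),
\]
which carries the Chen--Ruan product to $\star$. Tensoring with $\C$ over $\R$ preserves both the bijectivity and the multiplicativity. We then set $\Psi_\Omega$ equal to the second factor composed with the first.

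\textbf{Checking the hypotheses of Hu--Wang.} Their theorem needs a compact almost complex orbifold, and we must confirm that $\mathfrak Y$ is one. Lemma~\ref{lem:CY2-sewing} shows that $J_\Omega$ is a genuine $\widehat G$-invariant almost complex structure on the compact manifold $X$, so every element of $\widehat G$, odd ones included, acts $J_\Omega$-complex linearly. Hence $\mathfrak Y$ is an ordinary almost complex global quotient orbifold. Its base $K$-theory is $K^*_{\widehat G}(X)\otimes\C$ with complex-linear equivariance, with no Real structure involved. No integrability of $J_\Omega$ is needed, since Hu--Wang work in the almost complex category. One point to confirm is that their Chen--Ruan ring convention matches the group-labelled global quotient model of \cite{FantechiGottsche}, including the orbit-sum identification, which Theorem~\ref{thm:CY2-realization} uses. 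If their normalization of the trace or inertia components differs by centralizer factors, that affects only the pairing and not the product, and I would record this explicitly.

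\textbf{Grading, the main obstacle.} Hu--Wang's character is two-periodic and lands in Chen--Ruan cohomology after forgetting age shifts. Here the odd reflection sectors have $J_\Omega$-age $1/2$, so shifted degree and form degree have opposite parity on those sectors. I therefore expect the delicate step to be showing that $\circ_{\mathrm{HW}}$ and $\star$ are compared with the correct signs. I would handle this by noting two facts. First, the product $\star$ in \eqref{eq:CY2-mixed-product} uses ordinary-form sign conventions. Second, the Hu--Wang product is matched with the Chen--Ruan product as an ungraded algebra, with only the $\Z/2$ given by ordinary fixed-sector form parity respected.

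The parity of the $K^0\oplus K^1$ splitting then corresponds to even versus odd ordinary form degree on each sector. Because the product uses ordinary wedge signs, no shifted-parity Koszul signs enter. This is exactly the restricted grading claim in the corollary. I would close by remarking that no statement about the rational shifted grading is being made, and that $\Psi_\Omega$ depends on $\Omega$ only through the choice of $J_\Omega$.
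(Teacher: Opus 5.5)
Your construction is the paper's: $\Psi_\Omega=\mathcal I_\Omega^{-1}\circ\widetilde{\operatorname{ch}}_{\mathrm{deloc}}$. Here \cite[Theorem~4.5]{HuWang} supplies the first factor, and the complexified identification of Theorem~\ref{thm:CY2-realization} supplies the second. The one substantive omission is in your hypothesis check. You verify compactness and the $\widehat G$-invariance of $J_\Omega$. The paper, however, applies Hu--Wang to a compact \emph{effective} almost complex global quotient. Effectivity of $\widehat G$ is not among the hypotheses of Theorem~\ref{thm:CY2-realization}, which only assumes that $G$ acts effectively. You therefore need the short argument the paper gives. A kernel element cannot be even, by hypothesis. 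It cannot be odd either: $d\sigma\circ J=-J\circ d\sigma$, so $d\sigma=\mathrm{id}$ would force $J=-J$. Without this step, the cited theorem is not yet shown to apply to $\mathfrak Y$ as the paper invokes it.

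There are also three smaller points.

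First, you assert the parity statement rather than derive it. The reason $K^0$ and $K^1$ land in even and odd \emph{ordinary} fixed-sector degree is threefold: the unmodified delocalized character has this property, the Hu--Wang modifying class has even ordinary degree, and $\mathcal I_\Omega$ uses the same fixed-locus cohomology.

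Second, the sign problem you single out as the main obstacle does not arise. Lemma~\ref{lem:CY2-sewing} has already matched all shifts, orientations and correction classes of $\star$ with those of the ordinary Chen--Ruan product. Both factors are therefore algebra isomorphisms for the same product, and multiplicativity of the composite is formal. Shifted parity enters only through the disclaimer about gradings.

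Third, your aside that centralizer normalizations affect only the pairing is imprecise, since sector-wise rescalings change structure constants. It is harmless, though, because the identification in Theorem~\ref{thm:CY2-realization} is already an algebra isomorphism onto $H^*_{\CR}(\mathfrak Y;\R)$.
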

\begin{proof}
The group $\widehat G$ preserves $J_\Omega$ by Lemma~\ref{lem:CY2-sewing}, and $X$ is compact. Its action is effective: an even element in the kernel is excluded by the effective $G$-action, and an odd element cannot act as the identity because its differential anticommutes with the original complex structure $J$. Thus $\mathfrak Y$ is a compact effective almost complex global quotient orbifold. Hu--Wang \cite[Theorem~4.5]{HuWang} therefore applies and gives a ring isomorphism
\[
 \widetilde{\operatorname{ch}}_{\mathrm{deloc}}:
 \bigl(K^*_{\mathrm{orb}}(\mathfrak Y;\C),\circ_{\mathrm{HW}}\bigr)
 \longrightarrow \bigl(H^*_{\CR}(\mathfrak Y;\C),\star_{\CR}\bigr).
\]
Let $\mathcal I_\Omega$ be the complexification of the ring identification in Theorem~\ref{thm:CY2-realization}. Then
\[
 \Psi_\Omega=\mathcal I_\Omega^{-1}\circ
       \widetilde{\operatorname{ch}}_{\mathrm{deloc}}
\]
is the required isomorphism. The delocalized character uses even and odd ordinary cohomology on inertia, and its modifying characteristic class has even ordinary degree. The map $\mathcal I_\Omega$ uses the same fixed-locus cohomology. Hence the comparison preserves that two-periodic parity, not necessarily the parity of the age-shifted degree.
\end{proof}

This corollary is an application of the existing Hu--Wang theorem, not an independent construction of their product or Chern character. It applies to the surface examples above without changing their multiplication tables. A reflection unit has ordinary degree zero and shifted degree one; this already illustrates why the two parity conventions cannot be conflated. Nor does \eqref{eq:HuWang-rotation} identify the closed-sector real form $\Fix(\RR)$ with the full conjugation-invariant surface ring. No integral, torsion-sensitive, or Real-equivariant refinement is inferred from complexification.

\paragraph{\textbf{Twisted pushforwards and differential refinements.}}
Carey--Wang \cite{CareyWangThom} construct Thom isomorphisms and wrong-way maps in twisted complex $K$-theory, with the source twisting adjusted by the obstruction to an ordinary $K$-orientation. Carey--Mickelsson--Wang \cite{CareyMickelssonWang} construct differential twisted $K$-theory and a twisted Chern character depending on a connection and curving on the twisting gerbe, together with a Riemann--Roch comparison. These are relevant sources for the additional twist and orientation data of a future refinement; they do not identify those data with the coefficient lines in Definition~\ref{def:admissible-package}. The corollary above uses the untwisted theorem of Hu--Wang; the suggested torsion-twisted extension discussed in \cite[Section~5]{HuWang} is not an input here.

Atiyah's $KR$-theory \cite{AtiyahKR}, twisted equivariant and orbifold $K$-theory \cite{AdemRuanTwistedK,LupercioUribeGerbes,TuXuLaurent}, and Real bundle-gerbe and twisting constructions \cite{HekmatiMurraySzaboVozzo,HekmatiSigns,LudersOttoWaldorf,FreedMoore} provide natural contexts for anti-linear symmetry and coefficient data. No isomorphism
\[
 KR_{\widehat G}(X)\otimes\R\cong H^*(\AOri)
\]
is asserted here. Such a comparison requires a precise twisting, Chern character and Real orientation convention. In particular, replacing the ordinary Euler class by a generalized-cohomology Euler class changes the construction and must not be used retrospectively to justify the invalid stable cancellation.

\subsection{Real Gromov--Witten theory and higher operations}
Orienting Real Cauchy--Riemann determinant lines is a global problem, as the constructions in Real Gromov--Witten theory demonstrate \cite{GeorgievaZinger}. A moduli-space extension requires compatible evaluation maps and compactifications, together with correction classes and orientations. The surface ring in this paper is a constant-sector ordinary orbifold ring after rotation; it is not identified with a genus zero Real Gromov--Witten algebra. The analysis of disc orientations under anti-symplectic involutions in \cite{FOOOAnti} requires additional data of relative-spin and moduli space; it does not identify this constant-sector ring with Floer cohomology. Likewise, the determinant line constructions in \cite{ZingerDet} fix analytic coherence conventions, not the ordinary Euler equality of two total correction bundles. The present cohomological proof does not automatically give a strict differential graded or $A_\infty$ enhancement.

\subsection{Gerbes, transgression and further comparison}
Jandl structures, unoriented gerbe holonomy and discrete torsion \cite{SchreiberSchweigertWaldorf,GawedzkiSuszekWaldorfWZW,GawedzkiSuszekWaldorfGerbes,SharpeDiscrete} address Wess--Zumino and equivariant $B$-field data under reversal. Loop groupoid and reflection transgression frameworks \cite{LupercioUribeLoop,NoohiYoung,YoungDW} explain why inversion of holonomy labels naturally occurs. The class in Section~\ref{sec:odd} is specifically a volume-line phase Bockstein, not an identification with a gerbe obstruction. An intrinsic stack or groupoid formulation of the correspondence data would need its own descent and Morita-invariance proof; those properties are not claimed from the global quotient formulas alone.

\subsection{Conclusion}
The established conclusions are a cohomological closed-sector real form with the necessary invariants, a precise single-sector age obstruction, a volume-line normalization criterion, an Euler-admissible even-rank realization theorem, and an explicit surface identification with rotated Chen--Ruan theory. The Eisenstein calculation illustrates the latter with a completely specified multiplication and trace. The square-torus and real quartic examples test disconnected and empty reflection loci, nonzero normal Euler classes, and a ring distinction not visible in graded dimensions. For the rotated surface quotient, Corollary~\ref{cor:HuWang-rotation} also records the ordinary complex stringy $K$-theory comparison supplied by Hu--Wang. Higher-dimensional existence, arbitrary odd-rank signs, analytic realization, Real $KR$ refinements and full unoriented field-theory operations remain distinct problems rather than consequences of these results.

\section*{Acknowledgements}
The author would like to thank Bohui Chen, An-Min Li, and Guosong Zhao for their constant support and encouragement.  Special thanks are due to Sung-Soo Kim, Kimyeong Lee, Futoshi Yagi, Satoshi Nawata, and Rui-Dong Zhu for many helpful and stimulating discussions.  In particular, the author is grateful to Andrea Brini, Todor Milanov, Arpan
Saha for their inspiring online seminars delivered during the pandemic, and to Doan Nhat Minh
and Van Nguyen for the enjoyable and fruitful collaboration. The author also gratefully acknowledges the many friends and colleagues met at various conferences for their friendship, encouragement, and stimulating conversations.  Finally, the author would also like to express special thanks to the Mainz Institute for Theoretical Physics (MITP) of the Cluster of Excellence PRISMA$^{+}$ (Project ID~390831469) for its hospitality and support.

\section*{Statements and Declarations}
\noindent\textbf{Funding.}
This work was supported by the National Natural Science Foundation of China (NSFC) under Grant Nos.~11501470, 11426187, and 11791240561, and was partially supported by NSFC Grant No.~11671328, the Chengdu Science and Technology Program under Grant No.~2025-YF09-00007-SN, and the Fundamental Research Funds for the Central Universities under Grant Nos.~2682021ZTPY043, 2682025ZTPY001, 2682025ZTPY057, and 2682025ZTO002.

\medskip
\noindent\textbf{Competing Interests.}
The author declares that there are no competing interests relevant to the content of this article.

\medskip
\noindent\textbf{Data availability.}
This article is theoretical. The proofs and explicit algebraic calculations are contained in the text; no empirical data are used.

\appendix
\section{Determinant lines, Euler classes and excess intersection}\label{app:det}

We collect the orientation conventions used in Section~\ref{sec:admissible}; see \cite{BottTu,Fulton} for Thom and excess intersection background.  Let $E\to M$ be a real vector bundle of rank $r$.  Its determinant line is
\[
 \det(E)=\Lambda^r_{\R}E,
\]
and its orientation local system is denoted by $o(E)$.  There is a canonical functorial isomorphism
\begin{equation}\label{eq:det-sum}
 \det(E\oplus F)\cong\det(E)\otimes\det(F),
\end{equation}
with the usual Koszul convention when factors are permuted.  In the paper all determinant line identifications are ordered according to the written direct sums, so the order is part of the specified comparison data.

If $E$ is a real vector bundle, its Euler class with local coefficients is
\[
 e(E)\in H^r(M;o(E)).
\]
For oriented direct sums, the Whitney formula gives
\begin{equation}\label{eq:euler-whitney}
 e(E\oplus F)=e(E)\smile e(F),
\end{equation}
under the identification $o(E\oplus F)\cong o(E)\otimes o(F)$.  A de Rham representative may be chosen using a Thom form on the total space and pullback by the zero section.  This is the meaning of the closed Euler forms $\mathbf e_{\gamma,\delta}$ in Definition~\ref{def:admissible-package}.

Let $i:Y\hookrightarrow M$ be a proper embedding of real codimension $c$.  A Gysin map with local coefficients has the form
\[
 i_*:H^k(Y;i^*\mathcal L\otimes o(N_i))\longrightarrow H^{k+c}(M;\mathcal L).
\]
Thus the factor $o(N_m)$ on the right of \eqref{eq:line-compatibility} is forced by the coefficient type of the pushforward.

For a tubular neighborhood with projection $\pi$ and a vertically compact Thom form $\Phi_N$, use the representative $\pi^*\eta\wedge\Phi_N$ and extension by zero. With the even codimension convention in Section~\ref{sec:admissible}, this commutes with $d$ in the convention of \eqref{eq:ordinary-leibniz}. Its cohomology map is independent of the tubular and Thom choices. This construction does not assert strict associativity of all representatives. For the related de Rham identities up to homotopy in global quotient stringy theory, see \cite[Sections~4--5, especially Theorem~4.9]{KaufmannDeRham}.

For a clean square, the oriented base change formula inserts the Euler class of the actual excess bundle. Applying it in the two triple compositions gives $e(B_L)$ and $e(B_R)$ with the written direct sum order. The formal theorem requires the explicit comparison \eqref{eq:euler-sewing}, together with the specified compatibility of the final pushforwards. The parity hypotheses make the correction permutations sign-free; an odd rank version would have to track them explicitly.

A stable oriented real bundle isomorphism does not imply equality of ordinary Euler classes, even if representatives have been chosen. The counterexample
\[
 TS^2\oplus\underline\R\cong\underline\R^3,
 \qquad e(TS^2)=2u\ne0=e(\underline\R^2)
\]
also shows why stabilization by a trivial line cannot be cancelled in the Whitney formula. The sufficient condition is an actual compatible isomorphism before stabilization, or a direct proof of the required Euler equality. No claim about ordinary Euler classes follows merely from equality in $KO$.

\end{document}